\newtheorem{theoreme}{Theorem}
\newtheorem{lemma}[theoreme]{Lemma}	
\newtheorem{proposition}[theoreme]{Proposition} 
\newtheorem{corollary}[theoreme]{Corollary}
\newtheorem{remark}[theoreme]{Remark}  
\newtheorem{definition}[theoreme]{Definition}
\newtheorem{notations}{Notations}
\DeclareMathOperator{\re}{Re}	\DeclareMathOperator{\im}{Im}
\newcommand{\norme}[1]{{\left\vert\kern-0.2ex\left\vert\kern-0.2ex\left\vert #1 
	\right\vert\kern-0.2ex\right\vert\kern-0.2ex\right\vert}}
\newcommand{\scal}[2]{\left\langle  {#1} , {#2}  \right\rangle} 
\DeclareMathOperator{\attr}{\mathbf A}
\DeclareMathOperator{\born}{\mathbf B}
\DeclareMathOperator{\Var}{Var}
\DeclareMathOperator{\per}{Per}
\DeclareMathOperator{\conv}{conv}
\begin{document}

\title{Piecewise rotations: limit set for the non -bijective maps}

\author{Nicolas B\'edaride\footnote{ Aix Marseille Université, CNRS, I2M UMR 7373, 13453 Marseille, France. Email: nicolas.bedaride@univ-amu.fr}\and  Jean-François Bertazzon\footnote{jeffbertazzon@gmail.com}\and Idrissa Kabor\'e\footnote{Universit\'e Nazi Boni,01 PB 1091, Bobo-Dioulasso, Burkina Faso. Email: ikaborei@yahoo.fr}}
\date{}

\maketitle

\begin{abstract}
We consider non-bijective piecewise rotations of the plane.  
These maps belong to a family introduced in previous papers by Boshernitzan and Goetz. 
We derive in this paper some upper bounds 
to the size of the limit set. This improves results of \cite{Bosh.Goet.03}.

\end{abstract}


\section{Introduction}
Consider a line $\mathbf D$ in $\mathbb{R}^2$, it splits the plane in two half-planes. Now define a dynamical system $T$ on $\mathbb{R}^2$ such that the restriction to each half-plane of $T$ is given by a euclidean rotation. The two rotations are of the same angle 
but with different centers, which may 
well lie outside of the corresponding half-plane. 

This type of map is called \emph{a piecewise isometry}. A lot of examples have been studied in the last years,
 see \cite{Goetz.00}, \cite{Ashwin.Fu} or \cite{Buzzi}. The maps studied here have the advantage of belonging 
to a family with a lot of interesting properties, see \cite{Bosh.Goet.03}, \cite{Goet.Quas.09} and \cite{Che.Goe.Qua.12} for references. 

Throughout 
this paper we follow the notation and conventions 
of Boshernitzan-Goetz \cite{Bosh.Goet.03}. They studied 
non-bijective maps. They prove that either every orbit is bounded uniformly by some constant $M$, or 
else every point outside a ball of radius $M$, has a divergent orbit.

The bijective case was not treated 
in this paper; it has been done by Goetz and Quas for a rational angle in the symmetric case 
(i.e. the middle of the segment between the two centers of rotations belongs to the line $\mathbf D$), see \cite{Goet.Quas.09}. In \cite{Bed.Kab.20} two authors of this paper give a complete description of the bijective symmetric map if the angle belongs to a 
specified finite set. In \cite{Bed.Kab.18} they describe the non-symmetric bijective maps, assuming the angle belongs to a particular finite set.

The aim of this paper is to find, for any non-bijective map in the family, an explicit bound for $M$, for every given angle. If the angle is rational, the strategy is to use the proof of \cite{Bosh.Goet.03} and improve it, where this is possible. For the irrational case, a complete different approach is presented.

\subsection{Outline of the paper}

In Section \ref{sec:definitions} we recall the definition of our maps, give some notations and introduce some tools used later. In Section \ref{sec:limit} we introduce the notion of limit sets which allows us to state correctly 
a previously known result, see Theorem \ref{thm:BG-rephrase}. Moreover we give some results on limit sets and periodic islands, see Proposition \ref{le:periodic_ba} and Proposition \ref{prop:perturbation}.

In Section \ref{se:irrational} we consider the case where the angle is an irrational multiple of  $\pi$ and we prove Theorem \ref{thm:sec-irrat}. 
Finally in Section \ref{se:rational} we consider the rational case $p/q$ where $q>2$ is an even number, see Theorem \ref{th:casrat}.

\section{Definitions}\label{sec:definitions}

\subsection{Definitions of a piecewise rotation}

Consider the euclidean space $\mathbb R^2$ equipped with an orientation. In the following we will use the classical identification between $\mathbb C$ and the euclidean plane. 
Let us fix a line $\mathbf D$. The line cuts the plane in two half planes, denoted 
by $\mathbf P_0$ and $\mathbf P_1$. 
Moreover we assume that $\mathbf P_1$ is closed, and that $\mathbf P_0$ is open, in order to have a partition of $\mathbb C$.

\begin{definition}
Now we fix a real number $\alpha$ and two points $C_0, C_1$. We define $\mathcal T$ as the set of maps $T$
$$\begin{array}{ccc}
\mathbb C&\rightarrow&\mathbb C\\
z&\mapsto& T(z) = e^{i\alpha}(z-C_j)+C_j\quad  \text{ if } \quad z\in \mathbf P_j, j\in\{0,1\}.
\end{array}$$
\end{definition}
\begin{remark}
The cases with $C_0=C_1$ or $\alpha=0$ 
will not be treated in the 
sequel. 
But the dynamic is obvious in these cases.
\end{remark}

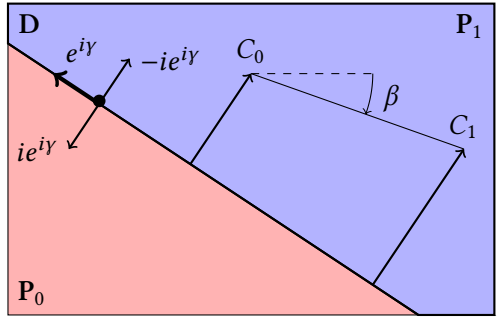
\begin{figure}
\centering		
	\begin{tikzpicture}[scale = 0.8]
	\draw[fill = red!30, draw] 
		(-3,-2.5) node[above right] {$\mathbf P_0$} --(-3,2)--(5,-2.5)-- cycle;
	\draw[fill = blue!30, draw, thick] 
		(-3,2.66)--(-3,2)--(3.75,-2.5)--(5,-2.5)--(5,2.66) node[below left]  {$\mathbf P_1$}-- cycle;
	\draw[thick] 
		(-3,2) -- (3.75,-2.5) node[above right] at (-3,2) {$\mathbf D$};
	\node at (-1.5,1) {\huge \textbullet};
	\draw[->,ultra thick,color = black] (-1.5,1)--(-2.25,1.5) node[above right] {$e^{i\gamma}$};
	\draw[->,thick,color = black] (-1.5,1)--(-2,0.25) node[left] {$ie^{i\gamma}$};
	\draw[->,thick,color = black] (-1.5,1)--(-1,1.75) node[right] {$-ie^{i\gamma}$}; ;
	\draw[->,thick,color = black] (0,0)--(1,1.5) node[above]  {$C_0$};
	\draw[->,thick,color = black] (3,-2)--(4.5,0.25) node[above] {$C_1$}; 
	\draw (1,1.5)--(4.5,0.25);
	\draw[dashed] (1,1.5)--(3,1.5);
	\draw [domain=0:-19.66,->] plot ({1+2*cos(\x)}, {1.5+2*sin(\x)});
	\node[right] at (3,1.1) {$\beta$};
	\end{tikzpicture}
\caption{
	Example of an application $T\in \mathcal T$ with
	$C_0 = 1+1.5i$, $C_1 = 4.5+0.25i$, $\gamma = \pi-\arctan(2/3)$.
}
\label{fig:defT}
\end{figure}

\begin{notations}
Let us fix some 
notation, 
see Figure \ref{fig:defT}. 
\begin{itemize}
\item We parametrize the line $\mathbf D$ by a point $z_0$ and an angle $\gamma$.
The angle $\gamma$ is 
chosen so 
that $z_0+ie^{i\gamma}$ belongs to $\mathbf P_0$.
\item We call $r_0, r_1$ the isometries of the plane which coincide with the restriction of $T$ to the two half planes: for all 
$ z\in \mathbb C$, $r_j(z) = e^{i\alpha}(z-C_j)+C_j$ with $j\in\{0,1\}$.
\item 
The two centers have complex coordinates given by $C_j = |C_j|e^{ic_j}$, and we denote $\beta=\arg(C_1-C_0)$. 
\item In the following we will need the 
quantity $\Delta = -2|C_1-C_0|\times \sin(\alpha/2) \cos(\gamma+\alpha/2-\beta)$.
\end{itemize}
\end{notations}

These notations shows that the map $T$ is given by some parameters. The following remark will explain how to reduce the number of such parameters.

\begin{remark}[Conjugacy] \label{conjugaison}
Let $T\in \mathcal T$, and consider the maps $f_0:z\mapsto \rho e^{i\theta}z+b$ and $f_1:z\mapsto \rho e^{i\theta}\bar z+b$ with $\rho \in \mathbb R_+^*$, $\theta\in \mathbb R$ and $b\in \mathbb C$.
Then we denote $T_{j} = f_j \circ T\circ f_j^{-1}$ for $j\in\{0,1\}$. This map is in $\mathcal T$, and we have the following parameters:
\[
\begin{array}{|c||c|c|c|c|c||c|}
\hline
T &\alpha&C_0&C_1&\beta&\gamma &\Delta \\
\hline
\hline
T_{0}  &\alpha&f_0(C_0)&f_0(C_1)&\theta+\beta &\theta+\gamma&\rho \Delta \\
\hline
T_{1} &-\alpha&f_1(C_0)&f_1(C_1)&\theta-\beta &\theta-\gamma&-\rho \Delta \\
\hline
\end{array}
\]
Thus we can always assume 
$0\in \mathbf D$, and the given data for $\gamma,\alpha, C_0, C_1$ determine 
the map $T$. We call these 
data 
the parameters of $T$.
\end{remark}

Now we recall 
a known result for the maps from $\mathcal{T}$, 
see \cite{Bosh.Goet.03}. 
For completeness we include a proof.

\begin{lemma}\label{prop:inj-surj}
$\;$
\begin{itemize}
\item A map $T\in\mathcal{T}$ is 
surjective 
if and only if $\Delta\geq 0$. In this case the strip $T(\mathbf P_0)\cap T(\mathbf P_1)$ has width $2\Delta$.
\item A map $T\in\mathcal{T}$ is 
injective 
if and only if $\Delta\leq 0$. In this case the set $\mathbb R^2\setminus (T(\mathbf P_0)\cup T(\mathbf P_1))$ is a strip of width $-2\Delta$.
\end{itemize}

\end{lemma}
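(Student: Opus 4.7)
The plan is to reduce both assertions to computing a single signed perpendicular distance between two parallel lines.

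Since $r_0$ and $r_1$ are planar rotations by the same angle $\alpha$, the lines $r_0(\mathbf D)$ and $r_1(\mathbf D)$ are parallel, both of direction $e^{i(\gamma+\alpha)}$. Consequently $T(\mathbf P_0)=r_0(\mathbf P_0)$ and $T(\mathbf P_1)=r_1(\mathbf P_1)$ are half-planes bounded by these lines; and since rotations preserve orientation while $\mathbf P_0$ and $\mathbf P_1$ originally lay on opposite sides of $\mathbf D$, the two rotated half-planes lie on opposite sides of their respective boundaries. Only three cases can then occur: either the two half-planes overlap in a strip (in which case $T$ is surjective but not injective), or they leave a gap strip (in which case $T$ is injective but not surjective), or their boundaries coincide (and $T$ is bijective). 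The lemma amounts to identifying these cases through the sign of $\Delta$.

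To carry this out I would use Remark~\ref{conjugaison} to assume $z_0=0\in\mathbf D$, so that $r_1(0)-r_0(0)=(C_1-C_0)(1-e^{i\alpha})$. Plugging in the identities $1-e^{i\alpha}=-2i\sin(\alpha/2)e^{i\alpha/2}$ and $C_1-C_0=|C_1-C_0|e^{i\beta}$, and projecting this difference onto the unit normal $ie^{i(\gamma+\alpha)}$ of $r_0(\mathbf D)$ (oriented to point into $r_0(\mathbf P_0)$), the expression collapses by a short trigonometric manipulation to $-2|C_1-C_0|\sin(\alpha/2)\cos(\gamma+\alpha/2-\beta)$, which is exactly $\Delta$ by definition.

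With this signed-distance identity in hand, the lemma follows: when $\Delta>0$, the line $r_1(\mathbf D)$ sits strictly on the $\mathbf P_0$-side of $r_0(\mathbf D)$, so the strip between them belongs to both half-planes and $T$ is surjective; when $\Delta<0$, the strip between them lies outside both half-planes and $T$ is injective; equality of the two lines (the case $\Delta=0$) is compatible with both. The widths of the overlap or gap strips are then governed by $|\Delta|$, matching the stated expressions. The main obstacle will be sign bookkeeping: one must orient the unit normal so that the sign of its projection agrees with the sign of $\Delta$ as defined in the Notations (rather than its opposite), and one must check that the convention making $\mathbf P_1$ closed and $\mathbf P_0$ open is consistent with the boundary case $\Delta=0$. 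Beyond this, the computation is routine.
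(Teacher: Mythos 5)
Your proposal is correct and follows essentially the same route as the paper: both arguments reduce the lemma to computing the signed projection of the displacement between the two parallel image lines $r_0(\mathbf D)$ and $r_1(\mathbf D)$ onto a unit normal oriented consistently with the half-planes, and both identify that projection with $\Delta$ via the identity $1-e^{i\alpha}=-2i\sin(\alpha/2)e^{i\alpha/2}$. The only difference is cosmetic --- you project $r_1(0)-r_0(0)$ onto the inward normal of $r_0(\mathbf P_0)$ while the paper projects $r_0(z)-r_1(z)$ onto the inward normal of $r_1(\mathbf P_1)$ --- so the two computations coincide up to an overall sign flip of both factors.
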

\begin{proof}
The image of $\mathbf D$ by $r_0, r_1$ are two parallel lines. Thus the images of the two half planes intersect or not. It is enough to compute the following scalar product to conclude, see Figure \ref{fig:casinj}.
We have for all $z\in \mathbf D$:
\begin{align*}
\scal{ r_1(z-ie^{i\gamma}) - r_1(z) }{ r_0(z)-r_1(z) }
	&=\scal{ -e^{i\alpha} ie^{i\gamma}  }{ e^{i\alpha}(C_1-C_0)-(C_1-C_0) }\\
	&=\scal{ -e^{i\alpha} ie^{i\gamma}  }{ (e^{i\alpha}-1)(C_1-C_0) }\\
	&=\scal{ e^{i\alpha} ie^{i\gamma}  }{ 2i\sin(\alpha/2)e^{i\alpha/2}|C_1-C_0| e^{i\beta} }\\
	&=-2|C_1-C_0|\sin(\alpha/2) \scal{ e^{i\alpha} ie^{i\gamma}  }{ ie^{i\alpha/2}e^{i\beta} }=\Delta \qedhere
\end{align*}
\end{proof}


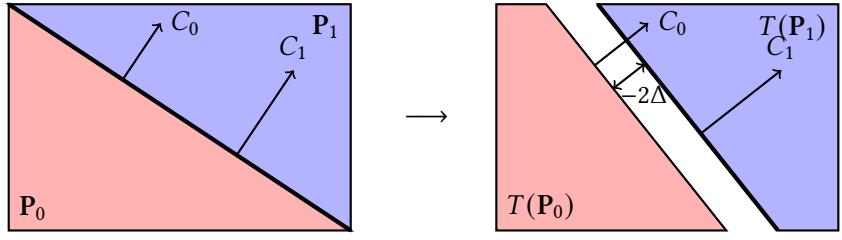
\begin{figure}
\begin{center}	 
	\begin{tikzpicture}[scale=0.5]
	\draw[fill = red!30, draw, thick] 
		(-3,-4)--(-3,2)--(6,-4)--(-3,-4) node[above right] {$\mathbf P_0$} -- cycle;
	\draw[fill = blue!30, draw, thick] 
		(-3,2)--(-3,2)--(6,2) node[below left] {$\mathbf P_1$} -- (6,-4) -- cycle;
	\draw[ultra thick] (-3,2)--(6,-4);
	\draw[->,thick,color = black] (0,0)--(1,1.5) node[right] at (1,1.5) {$C_0$};
	\draw[->,thick,color = black] (3,-2)--(4.5,0.25) node[above] at (4.5,0.25) {$C_1$}; 
	\node at (8,-1) {$\longrightarrow$};
	\end{tikzpicture}
\quad 
	\begin{tikzpicture}[scale=0.5]
	\draw[fill=red!30, draw, thick] 
		(-1.69,2)--(-3,2)--(-3,-4) node[above right] {$T(\mathbf P_0)$} --(3.05,-4) -- cycle;
	\draw[color=black] (-1.69,2)--(3.05,-4);
	\draw[fill=blue!30, draw, thick] 
		(-0.33,2)--(6,2) node[below left] {$T(\mathbf P_1)$} --(6,-4)--(4.41,-4) -- cycle;
	\draw[ultra thick] (-0.33,2)--(4.41,-4);
	\draw[->,thick,color = black] (-0.41,0.38)--(1,1.5) node[right]  {$C_0$}; ;
	\draw[->,thick,color = black] (2.38,-1.43)--(4.5,0.25) node[above] {$C_1$}; 
	\draw[<->,thick] (0.91,0.42)--(0.07,-0.24);
		\node[below right] at (0,0.1) {\small $-2\Delta$};
	\end{tikzpicture}
\end{center}
\caption{Injective example: same parameters as in Figure \ref{fig:defT} with an angle $\alpha = -\pi/10$.}
\label{fig:casinj}
\end{figure}

\begin{figure}
\begin{center}	 
	\begin{tikzpicture}[scale=0.5]
	\draw[fill = red!30, draw, thick] 
		(-3,-4)--(-3,2)--(6,-4)--(-3,-4) node[above right] {$\mathbf P_0$} -- cycle;
	\draw[fill = blue!30, draw, thick] 
		(-3,2)--(-3,2)--(6,2) node[below left] {$\mathbf P_1$} -- (6,-4) -- cycle;
	\draw[ultra thick] (-3,2)--(6,-4);
	\draw[->,thick,color = black] (0,0)--(1,1.5) node[right] at (1,1.5) {$C_0$};
	\draw[->,thick,color = black] (3,-2)--(4.5,0.25) node[above] at (4.5,0.25) {$C_1$}; 
	\node at (8,-1) {$\longrightarrow$};
	\end{tikzpicture}
\quad 
	\begin{tikzpicture}[scale=0.5]
	\draw[fill=red!30, draw, thick] 
		(-3,0.75)--(-3,-4) node[above right] {$T(\mathbf P_0)$}--(6,-4)--(6,-1.77) -- cycle;
	\draw[fill=blue!30,opacity=0.6, draw, thick] 
		(-3,-0.45)--(-3,2)--(6,2) node[below left] {$T(\mathbf P_1)$}--(6,-2.98) -- cycle;
	\draw[ultra thick] (-3,-0.45)--(6,-2.98);
	\node[below left] at (6,2) {$T(\mathbf P_1)$};
	\draw[->,thick,color = black] (0.51,-0.24) --(1,1.5) node[right]  {$C_0$};
	\draw[->,thick,color = black] (3.77,-2.35)--(4.5,0.25) node[below right] {$C_1$};  ; 
	\draw[<->] (1.09,-1.60) -- (1.40,-0.49);
		\node[right] at (1.2,-1.1) {\small $2\Delta$}; 
	\end{tikzpicture} 
\end{center}
\caption{Surjective example: same parameters as in Figure \ref{fig:defT} with an angle $\alpha = \pi/10$.}
\label{fig:cassurj}
\end{figure}

\subsection{Tools}

\begin{definition} \label{def-triple-norme}
Consider a map 
$T:\mathbb C\to \mathbb C$. We define:
\[
	\norme{T} = \sup\left\{ \big||T(z)|-|z|, z\in \mathbb C \right\} \in \mathbb R_+\cup\{+\infty\}.
\]
\end{definition}

\begin{lemma}\label{equ-rot}
For all $k\geq 0 $ and $z\in \mathbb C$ and for every map $T\in \mathcal T$ 
one has:
\begin{flalign*}
	&|T^k(z)-e^{ik\alpha}z|\leq k \norme{T}\\
	&\norme{T} \leq 2 \times | \sin(\alpha/2)| \times \max(|C_0|,|C_1|).
\end{flalign*}
\end{lemma}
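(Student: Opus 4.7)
My plan is to prove the second inequality first (it is easier) and then to bootstrap to the first via a bridging estimate. The starting observation is the explicit formula: for $z \in \mathbf P_j$, direct substitution in the definition of $T$ yields $T(z) - e^{i\alpha}z = (1 - e^{i\alpha})C_j$, whose modulus is $|1 - e^{i\alpha}|\cdot|C_j| = 2|\sin(\alpha/2)|\cdot|C_j|$. The reverse triangle inequality $\bigl||T(z)| - |z|\bigr| = \bigl||T(z)| - |e^{i\alpha}z|\bigr| \leq |T(z) - e^{i\alpha}z|$, followed by taking the supremum over $z \in \mathbb C$, immediately gives the second inequality.

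For the first inequality, iterating the displacement identity above yields, with $j_\ell$ denoting the index such that $T^\ell(z) \in \mathbf P_{j_\ell}$,
\[
T^k(z) - e^{ik\alpha}z = \sum_{\ell=0}^{k-1} e^{i(k-1-\ell)\alpha}(1-e^{i\alpha})C_{j_\ell},
\]
so the triangle inequality gives $|T^k(z) - e^{ik\alpha}z| \leq \sum_{\ell=0}^{k-1} 2|\sin(\alpha/2)|\cdot|C_{j_\ell}|$. To close the bound at $k\,\norme{T}$ I need the bridging estimate $|T(z) - e^{i\alpha}z| \leq \norme{T}$ for all $z$, equivalently $2|\sin(\alpha/2)|\cdot|C_j| \leq \norme{T}$ for each $j \in \{0,1\}$.

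This bridging step is the main obstacle, because $\norme{T}$ is defined in terms of moduli only, whereas the iteration naturally involves complex displacements; the two coincide only when the vectors $T(z)$ and $e^{i\alpha}z$ become collinear. I would prove it by exhibiting a near-explicit witness. Setting $v_j := (1 - e^{i\alpha})C_j$, take $z = s\,e^{-i\alpha}v_j$ with $|s| > 1$ and the sign of $s$ chosen so that $z \in \mathbf P_j$ (always possible since a half-plane contains one of any two opposite rays, up to a harmless perturbation in the degenerate case where the chosen direction lies along $\mathbf D$). A direct calculation then yields $T(z) = (s + 1) v_j$, so that $\bigl||T(z)| - |z|\bigr| = |v_j|$, which forces $\norme{T} \geq |v_j| = 2|\sin(\alpha/2)|\cdot|C_j|$. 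Plugging this into the telescoping bound gives the first inequality and completes the proof.
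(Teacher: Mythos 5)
Your proof is correct, and its skeleton coincides with the paper's: the displacement identity $T(z)-e^{i\alpha}z=(1-e^{i\alpha})C_j$ together with the reverse triangle inequality gives the bound on $\norme{T}$, and a telescoping sum over the orbit gives the $k$-th iterate estimate. The genuine difference is your bridging estimate $2|\sin(\alpha/2)|\,|C_j|\le\norme{T}$ for each $j$. The paper's proof silently uses $|T(w)-e^{i\alpha}w|\le\norme{T}$ in the chain $\sum_{l}|T(T^l(z))-e^{i\alpha}T^l(z)|\le\sum_l\norme{T}$; since $\norme{T}$ is defined purely in terms of moduli, this step is exactly the lower bound you isolate, and the first display of the paper's proof only yields the opposite inequality $\norme{T}\le 2|\sin(\alpha/2)|\max(|C_0|,|C_1|)$. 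The paper recovers the missing direction only later, as the equality \eqref{majo3} in Proposition \ref{propineg}, through the auxiliary function $g$ and a limit as $|z|\to\infty$. Your witness $z=s\,e^{-i\alpha}(1-e^{i\alpha})C_j$ with $|s|>1$, which realizes $\bigl||T(z)|-|z|\bigr|=2|\sin(\alpha/2)|\,|C_j|$ exactly, is a more elementary and self-contained way to obtain the same fact; the perturbation you invoke when the chosen ray lies along $\mathbf D$ (and the trivial case $C_j=0$, where the bound reads $0\le\norme{T}$) is indeed harmless. So your write-up is not merely correct: it makes explicit, and repairs locally, a step that the paper's own proof of this lemma leaves to a later proposition.
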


\begin{proof}
For $z\in \mathbb C$ we have $T(z) = e^{i\alpha}(z-C_j)+C_j$, and thus:
\[
	\big| |T(z)| - |z| \big| =  \big| |T(z)| - |e^{i\alpha} z|\big|  
	\leq |T(z)-e^{i\alpha}z| = |C_j(1-e^{i\alpha})| 
	\leq 2|\sin(\alpha/2)|\max(|C_0|,|C_1|).
\]
Hence we have proved the formula for $k=1$ and $\norme{T} \leq 2|\sin(\alpha/2)|\max(|C_0|,|C_1|)$.

We obtain by triangular inequality
\begin{align*}
	|T^k(z)-e^{ik\alpha}z|
	&\leq \sum_{l=0}^{k-1} |e^{i(k-1-l)\alpha} T^{l+1}(z)-e^{i(k-l)\alpha} T^l(z)| \\
	&\leq \sum_{l=0}^{k-1} |T(T^l(z))-e^{i\alpha}T^l(z)|
	\leq \sum_{l=0}^{k-1}  \norme{T}
	\leq k  \norme{T}. \qedhere
\end{align*}
\end{proof}

\begin{lemma}\label{lem-ineg-module}
Let 
$y\in \mathbb C$ with $R>|y|$. Then we have:
\[ \big| |R+y|-R-\re(y)\big| \leq \frac{|y|^2}{R-|y|}.\]
\end{lemma}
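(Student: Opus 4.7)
The plan is to compute $|R+y|^2$ explicitly and then factor a difference of squares. Writing $y = \re(y) + i\,\im(y)$, we get the algebraic identity
\[
 |R+y|^2 = (R+\re(y))^2 + \im(y)^2,
\]
which rearranges to
\[
 |R+y|^2 - (R+\re(y))^2 = \im(y)^2.
\]
Since $R > |y| \geq |\re(y)|$, the quantity $R+\re(y)$ is strictly positive, and the identity above shows that $|R+y| \geq R+\re(y) > 0$. In particular the absolute value on the left-hand side of the claim can be dropped: the difference $|R+y| - R - \re(y)$ is nonnegative.

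Next I would factor the difference of squares, using that both factors are positive:
\[
 |R+y| - R - \re(y) = \frac{\im(y)^2}{|R+y| + R + \re(y)}.
\]
To finish, bound the numerator by $\im(y)^2 \leq |y|^2$, and bound the denominator from below. Here I would use $|R+y| \geq R - |y|$ (reverse triangle inequality) together with $R+\re(y) \geq R - |y|$, which gives $|R+y| + R + \re(y) \geq 2(R-|y|) \geq R-|y|$. Combining these inequalities yields the claimed bound.

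The step that carries the proof is the identity $|R+y|^2 = (R+\re(y))^2 + \im(y)^2$; everything else is triangle-inequality bookkeeping. There is no real obstacle here. In fact the argument above even yields the sharper constant $\frac{|y|^2}{2(R-|y|)}$, but the weaker form stated in the lemma is enough for subsequent applications.
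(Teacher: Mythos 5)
Your proof is correct, and it takes a genuinely different route from the paper's. Both arguments rationalize $|R+y|$ via a difference of squares, but you subtract $(R+\re(y))^2$ — the ``right'' square, since $|R+y|^2=(R+\re(y))^2+\im(y)^2$ — which immediately gives the closed form $|R+y|-R-\re(y)=\frac{\im(y)^2}{|R+y|+R+\re(y)}$ and shows along the way that this quantity is nonnegative, so the absolute value is free. The paper instead subtracts $R^2$, which leaves a residual $-\re(y)$ term; setting $h=|R+y|-R-\re(y)$ it arrives at the self-referential identity $h=\frac{\im^2(y)-\re(y)\,h}{|R+y|+R}$ and then closes the loop with the inequality $|h|(R-|y|)\le |y|^2$. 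Your version avoids that fixed-point step entirely and, as you note, yields the sharper bound $\frac{|y|^2}{2(R-|y|)}$, since the denominator $|R+y|+R+\re(y)$ is at least $2(R-|y|)$ rather than just $R$. The only hypotheses you use are the same ones the paper uses ($R>|y|$ forces $R>0$ and $R+\re(y)>0$), so nothing is lost in generality.
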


\begin{proof}
Remark that $|R+y|^2 = R^2+2R\re(y)+|y|^2$ :
\[
|R+y|-R-\re(y)
	=\frac{|R+y|^2-R^2}{|R+y|+R}-\re(y)
	=\frac{|y|^2+2R \re(y)}{|R+y|+R}-\re(y)
	= \frac{|y|^2 +(R-|R+y|)\re(y)}{|R+y|+R}.
\]
If we denote $|R+y|-R-\re(y)= h$, then we have
$h= \frac{|y|^2 -(h+\re(y))\re(y)}{|R+y|+R}
	= \frac{\im^2(y)-\re(y)h }{|R+y|+R}$. 
We deduce $|h|\leq  \frac{|y|^2+|y|\cdot |h|}{R}$ and 
$0\leq |y|^2 + |h|(|y|-R)$, which gives the result.

\end{proof}

\begin{definition}\label{def-g}
Let us define the map $g$ :
\[
\begin{array}{ccc}
\mathbb R&\rightarrow&\mathbb R\\
x &\mapsto& g(x) = 
	\left\{
	\begin{array}{l}
	-2|C_0| \sin(\alpha/2)\sin(x+\alpha/2-c_0) \quad \text{if} \quad \gamma<x<\gamma+\pi \mod 2\pi .\\
	-2|C_1| \sin(\alpha/2)\sin(x+\alpha/2-c_1) \quad \text{otherwise}.
	\end{array}
	\right.
\end{array}
\]
\end{definition}

\begin{proposition} \label{propineg}
Consider a map $T\in\mathcal T$. Let $z=Re^{i\theta} \in \mathbb C$ 
such that $|z|> \norme{T}$, see Definition \ref{def-triple-norme}. 
Then we obtain the following inequalities, where $\theta_1$ is the argument of $T(z)$:
\begin{flalign}
	\label{majo1} 
	&\big| |T(z)|-|z|-g(\theta)\big| \leq \frac{\norme{T}^2}{|z|-\norme{T}},\\
	\label{majo3}
	&\norme{T}= 2|\sin(\alpha/2)|\max(|C_0|,|C_1|).\\
	\label{majo2} 
	&|\theta_1-(\theta+\alpha) \mod 2\pi |\leq \frac{\norme{T}}{|z|-\norme{T}}.
\end{flalign}
\end{proposition}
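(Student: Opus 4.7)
The plan is to reduce all three inequalities to a single factorization of $T(z)$ which isolates the rotation by $\alpha$ from a small perturbation, then to apply Lemma \ref{lem-ineg-module} together with an elementary estimate on the argument function. Writing $z = Re^{i\theta}$ with $z \in \mathbf P_j$ and using $1 - e^{i\alpha} = -2i\sin(\alpha/2)e^{i\alpha/2}$, I get
\[
T(z) = e^{i(\theta+\alpha)}\bigl(R + y\bigr), \qquad y := e^{-i(\theta+\alpha)}(1-e^{i\alpha})C_j.
\]
A direct computation yields $|y| = 2|\sin(\alpha/2)|\,|C_j|$, and expanding $e^{i(c_j-\theta-\alpha/2)}$ produces the key identity $\re(y) = -2|C_j|\sin(\alpha/2)\sin(\theta+\alpha/2-c_j)$. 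Crucially this matches $g(\theta)$, because the branch of $g$ at $\theta$ is selected by the half-plane containing $z$.

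I would establish (\ref{majo3}) first, to make (\ref{majo1}) and (\ref{majo2}) painless afterwards. The inequality $\norme{T} \le 2|\sin(\alpha/2)|\max(|C_0|,|C_1|)$ is already Lemma \ref{equ-rot}. For the reverse, pick $j^\ast$ with $|C_{j^\ast}| = \max(|C_0|,|C_1|)$ and approach the extremal direction $\theta^\ast = c_{j^\ast} - \alpha/2 \pm \pi/2$ from inside $\mathbf P_{j^\ast}$; since the two antipodal candidates lie on opposite sides of $\mathbf D$, this is always possible (up to an arbitrarily small $\epsilon$ in the open case $j^\ast=0$, using continuity of $g$). Letting $|z| \to \infty$ along such a ray, Lemma \ref{lem-ineg-module} gives $|T(z)| - |z| \to g(\theta^\ast)$ with $|g(\theta^\ast)|$ arbitrarily close to $2|\sin(\alpha/2)|\,|C_{j^\ast}|$, hence the matching lower bound.

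Once (\ref{majo3}) is known, the hypothesis $|z| > \norme{T}$ implies $|y| \le \norme{T} < |z|$, so Lemma \ref{lem-ineg-module} applied to $|R+y|$ delivers (\ref{majo1}) at once:
\[
\bigl| |T(z)| - R - g(\theta) \bigr| \le \frac{|y|^2}{R-|y|} \le \frac{\norme{T}^2}{|z|-\norme{T}}.
\]
For (\ref{majo2}), the factorization yields $\theta_1 = (\theta + \alpha) + \arg(1 + y/R) \pmod{2\pi}$; since $|y/R| < 1$ forces $1 + \re(y)/R > 0$, the standard bound $|\arctan x| \le |x|$ applied to the rectangular form of $1 + y/R$ gives
\[
|\arg(1+y/R)| \le \frac{|\im(y)|}{R-|y|} \le \frac{|y|}{R - |y|} \le \frac{\norme{T}}{|z|-\norme{T}}.
\]

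The main obstacle is purely bookkeeping: carefully tracking signs in the trigonometric identification $\re(y) = g(\theta)$, and in (\ref{majo3}) making sure the extremal direction can actually be approached from within the correct half-plane (the minor subtlety when $j^\ast = 0$ coming from $\mathbf P_0$ being open, which is handled by continuity of $g$ at the boundary).
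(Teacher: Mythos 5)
Your proposal is correct and follows essentially the same route as the paper: the same factorization $|T(z)|=|R+y|$ with $y=C_je^{-i\theta}(e^{-i\alpha}-1)$, the identification $\re(y)=g(\theta)$, Lemma \ref{lem-ineg-module} for \eqref{majo1}, the limiting argument $|T(z)|-|z|\to g(\theta)$ along extremal rays to upgrade Lemma \ref{equ-rot} to the equality \eqref{majo3}, and the $\arctan$ bound on $\arg(1+y/R)$ (which is exactly the paper's $T(z)e^{-i\alpha}/z$) for \eqref{majo2}. Your only divergence is a welcome one of exposition: you establish \eqref{majo3} first and are more explicit than the paper about why the supremum of $|g|$ is attainable from within the correct half-plane.
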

\begin{proof}

\begin{itemize}
\item \emph{Proof of \eqref{majo1} and \eqref{majo3}.}
Let $z = Re^{i\theta}\in \mathbb C$ such that $R>2|\sin(\alpha/2)|\max(|C_0|,|C_1|)$. 
Since $0\in \mathbf D=e^{i\gamma}\mathbb R$, we have
\[
z\in \mathbf P_0 \iff \gamma<\theta<\gamma+\pi \mod 2\pi.
\]
Let us denote $j\in\{0,1\}$ such that $z\in \mathbf P_j$. We obtain :
\[ 
	|T(z)|=|e^{i\alpha}(z- C_j) +C_j|
	=|Re^{i\theta}-C_j +e^{-i\alpha}C_j|
	=|R+C_je^{-i\theta}(e^{-i\alpha}-1)|
\]
Now we define 
\[ 
y = C_je^{-i\theta}(e^{-i\alpha}-1) = C_je^{-i(\theta+\alpha/2)}(e^{-i\alpha/2}-e^{i\alpha/2}) 
	=2\sin(\alpha/2)C_je^{-i(\theta+\alpha/2+\pi/2)}
\]
Note 
that $\re(y)=g(\theta)$, with $g$ defined in Definition \ref{def-g}. We apply Lemma \ref{lem-ineg-module} with $R=|z|$ and obtain:
\[
\big| |T(z)|-|z|-g(\theta)\big|  = \big| |R+y| - R - \re(y)\big| \leq \frac{|y|^2}{|z|-|y|}.
\]

For $|z|>|y|$ we 
deduce
\[
	\big| |T(z)| - |z| \big|  \geq |g(\theta)| - \frac{|y|^2}{|z|-|y|} .
\]
We conclude $\norme{T} \geq |y|$ 
using 
the fact that
$|y| = \sup|g(\theta)|$ and $\lim_{|z|\to+\infty}  \frac{|y|^2}{|z|-|y|}  = 0$. 
We conclude 
by means of 
Lemma \ref{equ-rot} 
to obtain 
Equation $(2)$. Then we deduce $(1)$.

\item \emph{Proof of \eqref{majo2}.} 
\[
	\left|\frac{T(z)e^{-i\alpha}}{z}-1\right| 
	=\left|\frac{T(z)}{z}-e^{i\alpha}\right| 
	=\left|\frac{e^{i\alpha}(z-C_j)+C_j}{z}-e^{i\alpha}\right| 
	= \frac{|C_j(1-e^{i\alpha})|}{|z|} \leq \frac{\norme{T}}{|z|}.
\]
If $\frac{\norme{T}}{|z|}<1$ we deduce : 
\begin{flalign*}
&\left|\im \left(\frac{T(z)e^{-i\alpha}}{z}\right)\right|
	= \left|\im \left(\frac{T(z)e^{-i\alpha}}{z}-1\right)\right|
	\leq  \left|\frac{T(z)e^{-i\alpha}}{z}-1 \right|
	\leq \frac{\norme{T}}{|z|}\\
&\left|\re \left(\frac{T(z)e^{-i\alpha}}{z}\right)\right|
	= 1+\re \left(\frac{T(z)e^{-i\alpha}}{z}-1\right)
	\geq 1-\frac{\norme{T}}{|z|}.
\end{flalign*}
Thus we obtain
\[
	|\tan(\theta_1-\alpha-\theta)| 
	= \frac{\left|\im \left(\frac{T(z)e^{-i\alpha}}{z}\right)\right|}{\left|\re \left(\frac{T(z)e^{-i\alpha}}{z}\right)\right|}
	\leq \frac{ \frac{\norme{T}}{|z|} }{ 1 - \frac{\norme{T}}{|z|} }
	= \frac{\norme{T}}{|z|-\norme{T}}.
\]

Thus we conclude
\[
	0\leq |\theta_1-(\theta+\alpha)| \leq  \arctan\left(\frac{\norme{T}}{|z|-\norme{T}}\right)
	\leq \frac{\norme{T}}{|z|-\norme{T}} .
	\qedhere
\]
\end{itemize}
\end{proof}

\section{Limit sets}\label{sec:limit}

\subsection{Definitions}

\begin{definition} 
Let $T: \mathbb C\to \mathbb C$ be an application.
For $M>0$, let  $B(0,M)$ be the open ball of radius $M$ centered at $0$. We define the following sets for any $n\in \mathbb N$:
\[
	\mathbf{A}_{M,n} (T) = T^nB(0,M)
	\quad \text{and} \quad
	\born_{M,n} (T) = T^{-n} B(0,M).
\]

\[
\attr(T)  = \bigcup_{M>0} \bigcap_{n\geq 0}\attr_{M,n}(T)
\quad \text{and} \quad
\born (T) = \bigcup_{M>0}  \bigcap_{n\geq 0} \born_{M,n}(T).
\]
\end{definition}

These sets will be called \emph{limit sets} in the rest of the paper.

\begin{remark}
Let $z\in \mathbb C$. The previous sets can be characterized as follows:
\begin{itemize}
\item 
$z\in \attr(T)\iff \exists (y_n)_n\in \mathbb C^{\mathbb N}$
a bounded sequence such that $\forall n\in \mathbb N$, $z = T^n(y_n)$.
\item  
$z\in \born(T)$ if and only if  $(T^n(z))_n$ is a bounded sequence.
\end{itemize}
\end{remark}

\begin{remark}\label{rem:imag}
By construction we obtain the following 
inclusions:
\begin{itemize}
\item 
$\forall n\in \mathbb N$, $\attr(T)\subset \attr(T^n)$ and $\born(T)\subset \born(T^n)$.
\item 
$\attr(T) \subset \bigcap_{n\geq 0} \im(T^n)$.
\end{itemize}
\end{remark}



\subsection{Global attraction and global repulsion}

\begin{definition} \label{def:attrrep}
Let $T$ be an application from $\mathbb C$ to $\mathbb C$. 
\begin{itemize}
\item 
The application $T$ is globally attractive if  there exists $M\geq 0$ such that $\forall L\geq 0$,
$\limsup\limits_{n\to+\infty}  \sup\limits_{|z|\leq L}  |T^n (z)| \leq M$.
\item 
The map $T$ is globally repulsive if there exists $M\geq 0$  such that $\forall L\geq M$,
$\liminf\limits_{n\to+\infty}  \inf\limits_{|z|\geq  L}  |T^n(z)|=+\infty$.
\end{itemize}
We will denote $M(T)$ the infimum of such real numbers $M$. 
\end{definition}

\begin{remark} \label{le:globorn}
Let $T$ be an application from $\mathbb C$ to $\mathbb C$.
\begin{itemize}
\item If $T$ is globally attractive, then $\attr(T)$ is bounded and included in $B(0, M(T))$.
\item If $T$ is globally repulsive, then, $\born(T)$ is bounded and included in $B(0,M(T))$.
\end{itemize}
\end{remark}

We are interested 
in the size of the limit sets.
Boshernitzan and Goetz have proved the following formula, reformulated here using our terminology. 

\begin{theoreme}\cite{Bosh.Goet.03}\label{thm:BG-rephrase}
\begin{itemize}
\item Let $T \in  \mathcal T$ 
be 
a surjective and non-injective map, then $T$ is globally attractive and
 $\attr(T)$ is a bounded set.
\item Let $T \in  \mathcal T$ 
be 
an injective and non-surjective map, then $T$ is globally repulsive and
 $\born(T)$ is a bounded set.
\end{itemize}
\end{theoreme}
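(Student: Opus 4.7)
The strategy is to study the orbits in polar coordinates, reducing the large-modulus radial dynamics to a Birkhoff average of the function $g$ from Definition~\ref{def-g} along the rigid rotation $\theta \mapsto \theta + \alpha$ of the circle. The key preliminary computation is the mean of $g$. Splitting $\int_0^{2\pi} g$ over the two arcs $(\gamma,\gamma+\pi)$ and $(\gamma+\pi,\gamma+2\pi)$ on which the two sinusoidal formulas apply, each integrand is a shifted sine over a half-period, giving
\[
\int_0^{2\pi} g(x)\,dx = 4\sin(\alpha/2)\bigl(|C_1|\cos(\gamma+\alpha/2-c_1) - |C_0|\cos(\gamma+\alpha/2-c_0)\bigr).
\]
The bracket equals $\mathrm{Re}\bigl(e^{i(\gamma+\alpha/2)}\overline{C_1-C_0}\bigr) = |C_1-C_0|\cos(\gamma+\alpha/2-\beta)$, so the right-hand side simplifies to $-2\Delta$. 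Thus the mean of $g$ has sign opposite to that of $\Delta$: strictly negative in the surjective case $\Delta>0$ and strictly positive in the injective case $\Delta<0$.

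Second, I iterate the pointwise estimates. Setting $T^k(z) = R_k e^{i\theta_k}$, Proposition~\ref{propineg} yields, whenever $R_k > \norme{T}$, the identities $R_{k+1}-R_k = g(\theta_k)+\eta_k$ and $\theta_{k+1} \equiv \theta_k + \alpha + \delta_k \pmod{2\pi}$ with $|\eta_k|\le \norme{T}^2/(R_k-\norme{T})$ and $|\delta_k|\le \norme{T}/(R_k-\norme{T})$. Telescoping over $N$ steps gives $R_N - R_0 = \sum_{k=0}^{N-1} g(\theta_k) + \sum_{k=0}^{N-1}\eta_k$, and the $\theta_k$ track the rotation orbit $\{\theta_0+k\alpha\}$ up to cumulative angular drift of order $N\norme{T}/R_0$. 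Since $g$ is bounded and Riemann-integrable with finitely many jumps, Weyl equidistribution when $\alpha/(2\pi)\notin\mathbb{Q}$ and a direct Riemann-sum comparison when $\alpha/(2\pi)\in\mathbb{Q}$ yield, for $N$ large and $R_0$ large enough, the uniform estimate
\[
\frac{1}{N}\sum_{k=0}^{N-1} g(\theta_k) \le -\frac{\Delta}{2\pi}\quad\text{if }\Delta>0,
\]
with the symmetric lower bound $\ge -\Delta/(2\pi)$ if $\Delta<0$.

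Iterating in windows of length $N$ then concludes. In the surjective case one gets $R_N \le R_0 - N\Delta/(4\pi)$ whenever $R_0$ is large, so after finitely many windows the orbit enters a fixed ball $B(0,M)$ depending only on $T$. This is exactly global attraction in the sense of Definition~\ref{def:attrrep}, and Remark~\ref{le:globorn} gives boundedness of $\attr(T)$. In the injective case the symmetric estimate $R_N \ge R_0 + N|\Delta|/(4\pi)$ forces $|T^n(z)|\to +\infty$ uniformly for $|z|$ large, yielding global repulsion and boundedness of $\born(T)$.

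The main obstacle is the interaction between the two discontinuities of $g$ at $\theta = \gamma,\gamma+\pi$ and the angular drift $\delta_k$. In the rational case $\alpha/(2\pi) = p/q$, the orbit $\{\theta_0+k\alpha\}$ is a finite set and one must secure a uniform bound $|\sum_{k=0}^{q-1} g(\theta_0+k\alpha)| \ge c\,q|\Delta|$ with $c>0$ independent of $\theta_0$, including for starting angles that land near the discontinuity set. Since $g$ is bounded and at most $O(1)$ indices per window can be misassigned to the wrong branch due to the drift $\delta_k$, the induced error is $O(1)$ and thus negligible against the linear-in-$N$ main term; one also needs to keep each $R_k$ above $\norme{T}$ throughout each window, which is ensured by choosing $R_0 \gg N\norme{T}$ at the start of a window.
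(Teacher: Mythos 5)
Your irrational-angle argument is sound and is essentially the route the paper takes in Section~\ref{se:irrational}: the mean $\int_0^{2\pi}g=-2\Delta$ is Lemma~\ref{lem-g}, the pointwise estimates are Proposition~\ref{propineg}, and your ``$O(1)$ misassigned indices'' claim can indeed be secured by first fixing the window length $N$ (unique ergodicity gives the Birkhoff bound uniformly in $\theta_0$ since $g$ is Riemann integrable) and then taking $R_0$ so large that the cumulative angular drift is smaller than the minimal gap of the $N$-point rotation orbit; the paper does the quantitative version of exactly this via Denjoy--Koksma and the three-gaps theorem.

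The rational case, however, contains a genuine gap. When $\alpha=2\pi p/q$ the angular orbit $\{\theta_0+k\alpha\}$ is periodic, so averaging over a window of length $N=mq$ gives \emph{the same} quantity as averaging over one period: no choice of $N$ improves the Riemann-sum error, which is of order $\Var(g)/q$ and need not be smaller than $|\Delta|/\pi$ (e.g.\ when $q$ is small or $\norme{T}\gg|\Delta|$). Worse, this is not merely a failure of the estimate: the one-period sum $\sum_{k=0}^{q-1}g(\theta_0+k\alpha)$ is, up to the $\eta_k$ errors, the radial component of the translation vector of the piecewise translation $T^q$ computed in Lemma~\ref{lemme-faux}, namely $v\cos(\arg v_k-\theta_0)$ with $\arg v_k-\theta_0$ ranging over an interval of length $2\pi/q$ as $\theta_0$ sweeps the cone $\mathbf C_k$. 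In the normalization of Remark~\ref{rem:conj-pi/2}, if $0<\beta-\alpha/2<\pi/q$ this cosine is \emph{negative} for $\theta_0$ near one edge of the cone even though $\Delta<0$: the modulus of the orbit genuinely decreases over whole periods before the point drifts into a neighbouring cone, so your claimed uniform sign for the windowed average is false and the radius is not eventually monotone. This is precisely why Section~\ref{se:rational} abandons the radial Birkhoff average and instead measures escape with the polygons $\mathbf P_x$ (progress along the projections $A_k'$, $B_k'$ rather than along $|z|$), which is the missing ingredient your sketch would need to handle the rational case.
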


We obtain an upper bound 
for $M(T)$, in the irrational and the rational case, by distinct methods presented in Sections \ref{se:irrational} and \ref{se:rational}.
 
\subsection{Periodic islands and limit sets} 

In this part we want to describe the limit sets 
$\born(T)$ and $\attr(T)$. 
Note 
that these sets can be 
quite complicated, 
as explained in \cite{Goetz.88} and \cite{Goetz2.88}. In Proposition \ref{le:periodic_ba} we describe some explicit subsets obtained by periodic islands (see Definition \ref{def:periodic_island}). 
Moreover, 
in Proposition \ref{prop:perturbation} and Corollary \ref{cor-attracteur-non-borne} we explain how these sets change when the parameters change, see Figure \ref{c-1}. 

%

Let us fix $\alpha$, $C_0$, $C_1$.
Now let $u=u_0,\ldots,u_{n-1}\in\{0,1\}^n$ be a finite word and consider the maps  $r_k(z) = e^{i\alpha}(z-C_k)+C_k$. 
We observe that 
\begin{equation}\label{eq:itere:rot}
r_{u_{n-1}} \circ \cdots \circ r_{u_0}(z) 
	= e^{in\alpha}z + \sum_{k=0}^{n-1} e^{-(n-k-1)i\alpha} C_{u_k}(1-e^{i\alpha}).
\end{equation}

\begin{definition}
Consider the word $u=u_0\ldots u_{n-1}\in\{0,1\}^n$ with $n\geq 1$ and assume $e^{in\alpha}\neq 1$. Then we define \emph{the almost periodic point associated to $u$} as the point
\[
z_u = \frac{1-e^{i\alpha}}{1-e^{in\alpha}} \sum_{k=0}^{n-1} e^{-(n-k-1)i\alpha} C_{u_k}.
\]
\end{definition}

The following lemma 
appears already 
in \cite{Goet.Quas.09} and \cite{Che.Goe.Qua.12}.

\begin{lemma}
Let $T\in \mathcal T$ and $u=u_0\ldots u_{n-1}\in\{0,1\}^n$ such that $e^{in\alpha}\neq 1$.
Then the almost periodic point $z_u$ is a periodic point of period $n$ for $T$ 
if and only if for all $k \in \llbracket 0,n-1\rrbracket$, $T^k(z_u) \in \mathbf P_{u_k}$.
\end{lemma}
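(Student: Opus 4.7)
My plan is to identify $z_u$ as the unique fixed point of the affine map
\[
f_u := r_{u_{n-1}} \circ \cdots \circ r_{u_0},
\]
and then deduce the equivalence from this characterization. From formula~(\ref{eq:itere:rot}), $f_u(z) = e^{in\alpha}z + (1-e^{i\alpha})\sum_{k=0}^{n-1} e^{-(n-k-1)i\alpha} C_{u_k}$. Since $e^{in\alpha}\neq 1$, the equation $z = f_u(z)$ has a unique solution, and a direct rearrangement shows that this solution is exactly the expression defining $z_u$. So $z_u$ is precisely the fixed point of $f_u$.

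For the ``if'' direction, assuming $T^k(z_u) \in \mathbf P_{u_k}$ for every $k \in \{0,\ldots,n-1\}$, the piecewise definition of $T$ gives $T^{k+1}(z_u) = r_{u_k}(T^k(z_u))$ at each step, so by induction $T^n(z_u) = f_u(z_u) = z_u$. For the converse, assuming $T^n(z_u) = z_u$, one lets $v_k \in \{0,1\}$ be determined by $T^k(z_u) \in \mathbf P_{v_k}$; the same induction gives $T^n(z_u) = f_v(z_u)$, so $z_u$ is a fixed point of $f_v$, and hence coincides with $z_v$.

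The main obstacle I foresee is this last identification: two distinct words can, in principle, produce the same almost periodic point, so $z_u = z_v$ does not automatically imply $v = u$. The lemma should therefore be read as identifying the period-$n$ orbit with the specific word that generates its symbolic coding under $T$; the genuine algebraic content lies in the fixed-point computation of the first paragraph, after which the biconditional follows once one fixes the convention that $z_u$ refers to the orbit with coding $u$.
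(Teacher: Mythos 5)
The paper does not actually prove this lemma --- it is stated with a pointer to \cite{Goet.Quas.09} and \cite{Che.Goe.Qua.12} --- so there is no in-paper argument to compare against. Your fixed-point identification of $z_u$ as the unique fixed point of $f_u=r_{u_{n-1}}\circ\cdots\circ r_{u_0}$ is exactly the intended mechanism, and your proof of the ``if'' direction is complete and correct: the coding hypothesis forces $T^n(z_u)=f_u(z_u)=z_u$.

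The obstacle you flag in the converse is not a cosmetic worry; it is a genuine counterexample to the lemma as literally stated, and your proof does not (and cannot) close it without rephrasing the statement. Concretely, take $\alpha=2\pi/3$, $n=4$ (so $e^{in\alpha}\neq 1$), $v=0000$ and $u=1110$. Then $z_v=C_0$ (the fixed point of $r_0^4$), and $z_u-z_v$ is a multiple of $(C_1-C_0)(1+e^{-i\alpha}+e^{-2i\alpha})=0$, so $z_u=C_0$ as well. If $C_0$ lies in the open half-plane $\mathbf P_0$, then $z_u=C_0$ satisfies $T^4(z_u)=z_u$, yet $z_u\notin\mathbf P_{u_0}=\mathbf P_1$: the ``only if'' direction fails. (Reading ``period $n$'' as \emph{least} period does not rescue the statement, since then the ``if'' direction breaks instead, e.g.\ for $u=00$ with $C_0\in\mathbf P_0$.) So your last paragraph correctly diagnoses that the biconditional only holds under an extra convention --- either that distinct admissible words of length $n$ yield distinct almost periodic points, or that ``$z_u$ is periodic'' is shorthand for ``$z_u$ is periodic with coding $u^\omega$,'' which is precisely how Definition \ref{def:periodic_island} and Proposition \ref{le:periodic_ba} use it. In short: your argument establishes everything the paper actually relies on, and the residual gap lies in the lemma's formulation rather than in your proof; it would be worth saying explicitly that only the ``if'' direction is needed downstream.
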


A {\it coding} of the map $T$ is a map from $\mathbb{C}$ to $\{0,1\}^{\mathbb N}$ which sends the point $z$ to the sequence $(u_n)_{n\in\mathbb N}$ where $T^nz\in P_{u_n}$. 

\begin{definition} \label{def:periodic_island}
Let $n$ be such that $e^{in\alpha}\neq 1$ and consider a word $u=u_0\ldots u_{n-1}\in\{0,1\}^n$. 
Assume that $z_u$ is a periodic word for $T$ with coding $u^\omega$ (a periodic sequence of period $u$). Then we define the \emph{periodic island} as the set of complex numbers $z$ such that $T^k(z) \in \mathbf P_{u_{(k \mod n)}}$ for all integers $k\geq 0$. 

Moreover we define the \emph{weight} of $z_u$ 
as 
$w(z_u)=\min \big\{d(T^k(z),\mathbf D);k \in {\llbracket 0,n-1\rrbracket}\big\}.$

\end{definition}

Note that a periodic island is a convex set obtained as intersection of half-planes.
If $z_u$ is a periodic point, then $B(z_u,w(z_u))$ is included inside the periodic island associated to $z_u$, see \cite{Goet.Quas.09} 
(this inclusion becomes an equality 
if $\alpha$ is irrational, see Section 2 of \cite{Goet.Quas.09}).

\begin{proposition} \label{le:periodic_ba}
For 
any $T\in \mathcal T$ we have 
\[
\bigcup_{z_u \in \per} B(z_u,w(z_u)) \subset \born(T)
\quad\text{and}\quad
\bigcup_{z_u \in \per} B(z_u,w(z_u)) \subset 
\attr(T),
\]
where $\per = \{ z_u \text{ periodic point of some period $n$ such that }  e^{in\alpha}\neq 1\}$. 
\end{proposition}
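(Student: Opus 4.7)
\medskip

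\noindent\textbf{Proof plan.} Fix a periodic point $z_u \in \per$ of period $n$ with coding $u^\omega$, and set $B := B(z_u, w(z_u))$. The plan is to show that $B$ is mapped rigidly by $T$ along the periodic orbit and that $T^n$ acts on $B$ as a rotation around $z_u$, from which both inclusions follow easily.

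\medskip

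\noindent\emph{Step 1: rigidity along the orbit.} I would first argue that for every $k \in \{0,1,\dots,n-1\}$, the restriction of $T^k$ to $B$ coincides with the isometry $r_{u_{k-1}} \circ \cdots \circ r_{u_0}$, and that $T^k(B) = B(T^k(z_u), w(z_u))$. This is a short induction on $k$: the induction hypothesis gives $T^k(B) = B(T^k(z_u), w(z_u))$, and by definition of $w(z_u)$ the center $T^k(z_u)$ lies at distance at least $w(z_u)$ from $\mathbf D$ on the $\mathbf P_{u_k}$ side, so the whole ball $T^k(B)$ sits in $\mathbf P_{u_k}$ and $T$ acts on it as the isometry $r_{u_k}$.

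\medskip

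\noindent\emph{Step 2: $T^n$ is a rotation of $B$ around $z_u$.} From Step 1 and Equation \eqref{eq:itere:rot}, for every $z \in B$ one has $T^n(z) = e^{in\alpha} z + C$ for a constant $C$ depending only on $u$. Since $T^n(z_u) = z_u$, this rewrites as
\[
T^n(z) - z_u = e^{in\alpha}(z - z_u), \qquad z \in B.
\]
In particular $T^n$ is a bijection of $B$ onto itself (an isometry fixing $z_u$), regardless of whether $n\alpha$ is a rational multiple of $2\pi$.

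\medskip

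\noindent\emph{Step 3: inclusion in $\born(T)$.} For $z \in B$ and any $m \in \mathbb N$, write $m = qn + r$ with $0 \leq r < n$. By Step 2, $T^{qn}(z) \in B$, and by Step 1 applied to $T^{qn}(z)$, we get $T^m(z) \in B(T^r(z_u), w(z_u))$. Hence $(T^m(z))_m$ is bounded, so $z \in \born(T)$.

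\medskip

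\noindent\emph{Step 4: inclusion in $\attr(T)$.} Here I use bijectivity of $T^n\big|_B$. Let $S : B \to B$ denote the inverse rotation $S(w) = z_u + e^{-in\alpha}(w - z_u)$; by Step 2, $T^n \circ S = \id_B$. Given $z \in B$, define $w_k := S^k(z) \in B$, so that $T^{kn}(w_k) = z$. For an arbitrary $m \geq 0$, write $m = qn + r$ with $0 \leq r < n$ and set
\[
y_m := T^{n-r}(w_{q+1}) \quad \text{(with the convention } y_0 := z\text{)}.
\]
Using Step 1 for $w_{q+1} \in B$, one checks $T^m(y_m) = T^{(q+1)n}(w_{q+1}) = z$, and $y_m$ lies in $B(T^{n-r}(z_u), w(z_u))$, a set depending only on $r \in \{0,\dots,n-1\}$. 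Thus $(y_m)_m$ is bounded and $z \in \attr(T)$.

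\medskip

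\noindent The only non-routine point is Step 4: one must produce a \emph{single} bounded sequence of preimages along all iterates $T^m$, not just along $T^{kn}$. The trick is to first invert $T^n$ on $B$ (where it is a rigid rotation, hence clearly invertible) and then to push the chosen preimage forward by a bounded number of steps to obtain $y_m$. Everything else reduces to the isometric nature of each $T^k$ on $B$, which is where the weight $w(z_u)$ plays its role.
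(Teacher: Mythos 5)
Your proposal is correct and follows essentially the same route as the paper: observe that $T^n$ restricted to $B(z_u,w(z_u))$ is a rigid rotation about $z_u$ (hence the ball is $T^n$-invariant), deduce boundedness of forward orbits for the $\born$ inclusion, and invert along the cycle of balls to produce a bounded sequence of preimages for the $\attr$ inclusion. Your Step 4 is merely a more explicit version of the paper's appeal to the bijection $\hat T$ on $\bigcup_{i=0}^{n-1}T^iB(z_u,w(z_u))$ — the points $y_m=T^{n-r}(S^{q+1}(z))$ you construct coincide with the paper's $\hat T^{-m}(z)$.
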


\begin{proof}  
Let $z_u$ be 
a periodic point of period $n$ with  $e^{in\alpha}\neq 1$, and let $z\in B(z_u,w(z_u))$.
Notice 
that the restriction of $T^n$ to $B(z,w(z_u))$ is a rotation of angle $n\alpha$ 
with 
center $z_u$.
\begin{itemize}
\item 
Let  $k\in \mathbb N$ and consider the euclidean division: $k=qn+r$ with $0\leq r<n$. The point $T^{qn}(z)$ is in $B(z_u,w(z_u))$. By definition of $\norme{T}$ and Lemma \ref{equ-rot} we deduce
 
\[
|T^k(z)| = |T ^{r} \circ T^{qn}(z)|
	\leq r\norme{T} + |T^{qn}(z)|
	< n\norme{T} + |z_u|+ w(z_u).
\]
Thus we obtain $\sup\{|T^kz|\mid k\in \mathbb N\} < n\norme{T} + |z_u|+ w(z_u) <+\infty$ 
and $z\in \born(T)$.
\item 
The restriction of $T$ to $\cup_{i=0}^{n-1} T^{i} B(z_u,w(z_u))$ is a bijection 
onto its image, 
which is the same set. Let us denote 
this map by $\hat T$. Then 
for all $k\in \mathbb N$, if we denote
$y_k = {\hat T}^{-k} (z)$, we have $z = T^k(y_k)$ and we conclude $z\in \attr(T)$.
\end{itemize}
\end{proof}

\begin{proposition}
\label{prop:perturbation}
Let us consider 
an element $T$ of 
$\mathcal{T}$ with parameters $\alpha$, $\gamma$, $\mathbf D$ and $C_j$.
For all $\varepsilon>0$, let us denote 
by 
$T_\varepsilon$ the element of $\mathcal{T}$ with the same 
parameters, except for 
the discontinuity line $\mathbf D_\varepsilon = e^{i\varepsilon} \mathbf D$.
\newline
Consider 
a periodic point $z_u$ 
of $T$ of weight $w$ associated to the word $u$ of length $n$. Then for all $\varepsilon>0$ such that 
\[w_\varepsilon=w-2\big(|z_u| +n  \norme{T}  \big)\times |\sin (\varepsilon/2)|>0,\]
the point $z_u$ is a periodic point for $T_{\varepsilon}$ with weight bigger or 
equal to $w_\varepsilon$.
\end{proposition}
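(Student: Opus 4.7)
The decisive observation is that $T$ and $T_\varepsilon$ share the same pair of isometries $r_0,r_1$; only the partition $(\mathbf P_0,\mathbf P_1)$ used to choose between them is modified. Consequently, whenever a point $p$ lies on the same side of $\mathbf D_\varepsilon$ as it does of $\mathbf D$, the two maps agree at $p$: $T_\varepsilon(p)=T(p)$. My plan is therefore to prove by induction on $k\in\{0,\dots,n-1\}$ the joint statement
\[
T_\varepsilon^k(z_u)=T^k(z_u), \qquad d(T^k(z_u),\mathbf D_\varepsilon)\ge w_\varepsilon,
\]
and that this point lies on the same side of $\mathbf D_\varepsilon$ as of $\mathbf D$. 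Taken together at $k=n$, this yields $T_\varepsilon^n(z_u)=T^n(z_u)=z_u$ together with the desired lower bound on the weight of $z_u$ as a periodic point of $T_\varepsilon$.

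The only analytic input I need is a first-order comparison of the two lines. Parametrizing the signed distance from $p$ to the rotated line $e^{i(\gamma+t)}\mathbb R$ by $|p|\sin(\arg(p)-\gamma-t)$, a one-line sum-to-product identity gives
\[
\bigl|d(p,\mathbf D_\varepsilon)-d(p,\mathbf D)\bigr|\le 2|p|\,|\sin(\varepsilon/2)|.
\]
Working with \emph{signed} distances (so as to exclude a point silently jumping across the line), one then concludes that whenever $d(p,\mathbf D)>2|p|\,|\sin(\varepsilon/2)|$, the point $p$ lies on the same side of both lines and $d(p,\mathbf D_\varepsilon)\ge d(p,\mathbf D)-2|p|\,|\sin(\varepsilon/2)|$.

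To close the induction I apply this estimate to $p=T^k(z_u)$. Lemma \ref{equ-rot} supplies the orbit bound $|T^k(z_u)|\le |z_u|+k\norme{T}\le |z_u|+n\norme{T}$, while the definition of weight gives $d(T^k(z_u),\mathbf D)\ge w$. Injecting both estimates into the previous display produces exactly $d(T^k(z_u),\mathbf D_\varepsilon)\ge w_\varepsilon>0$; this both preserves the side of the discontinuity line (so that $T_\varepsilon(T^k(z_u))=T(T^k(z_u))=T^{k+1}(z_u)$) and delivers the weight bound demanded at step $k$. I do not anticipate a serious obstacle: the argument reduces to a short trigonometric estimate married to the orbit bound of Lemma \ref{equ-rot}. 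The one point demanding mild care is insisting on signed rather than unsigned distances, without which one could not rule out an orbit point crossing $\mathbf D_\varepsilon$ while the two unsigned distances remained close.
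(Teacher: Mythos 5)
Your proof is correct and follows essentially the same route as the paper's: an induction on $k$ showing that $T_\varepsilon^k(z_u)=T^k(z_u)$ stays on the same side of the rotated line with distance at least $w_\varepsilon$, using the orbit bound $|T^k(z_u)|\le |z_u|+n\norme{T}$ from Lemma \ref{equ-rot}. The only cosmetic difference is that you derive the key estimate $|d(p,\mathbf D_\varepsilon)-d(p,\mathbf D)|\le 2|p|\,|\sin(\varepsilon/2)|$ from the signed-distance formula and a sum-to-product identity, whereas the paper gets the same bound from the triangle inequality via $d(p,e^{i\varepsilon}p)=2|p|\,|\sin(\varepsilon/2)|$ and rotational invariance of the distance.
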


\begin{proof}
Let us consider $u=u_0\ldots u_{n-1}\in \{0,1\}^n$
and 
a periodic point $z_u$ 
for $T$ of weight $w$, associated to $u$.
By definition
we have:
\[
z_u = \frac{1-e^{i\alpha}}{1-e^{in\alpha}} \sum_{k=0}^{n-1} e^{-(n-k-1)i\alpha}  
C_{u_k}
\] 
Now we fix $\varepsilon$  small enough to have
\[
	w_\varepsilon=w-2\big(|z_u| +n \norme{T} \big)\times |\sin (\varepsilon/2)|>0.
\]
Let us prove that $z_u$ is a periodic point for $T_{\varepsilon}$.
We denote by 
$\mathbf P_{0}^\varepsilon$ and $\mathbf P_{1}^\varepsilon$ 
the half planes defined by $T_\varepsilon$.
\begin{itemize}
\item
We need to check that $k\in\llbracket 0,n-1\rrbracket$, $T_\varepsilon^k(z_u)\in \mathbf P_{u_k}^\varepsilon$
and $d(T_\varepsilon^k(z_u),e^{i\varepsilon} \mathbf D)\geq w_\varepsilon$. \newline
We'll give 
a proof by induction over $k$.
\item 
For $k=0$ we have by hypothesis
\[
d(e^{i\varepsilon}z_u,z_u) = |z_ue^{-i\varepsilon}-z_u|= 2|z_u| \times |\sin (\varepsilon/2)|.
\]
We deduce $z_u \in \mathbf P_{u_0}^\varepsilon$ 
and 
\begin{flalign*}
d(z_u,e^{i\varepsilon} \mathbf D)
	&\geq d(z_ue^{i\varepsilon},e^{i\varepsilon} \mathbf D) - d(z_u,e^{i\varepsilon} z)
		= d(z_u , \mathbf D) - d(z_u,e^{i\varepsilon} z)\\
	&>w - 2|z_u| \times|\sin (\varepsilon/2)| \geq w_\varepsilon.
\end{flalign*}
\item
Assume that the result is true for $k<n-1$. 
Then $T_\varepsilon^{k}(z_u)$ and $T^{k}(z_u)$ are in the same half planes 
$\mathbf P_{u_k}\cap \mathbf P_{u_k}^\varepsilon$ and 
$T^{k+1}_\varepsilon (z_u) = T^{k+1} (z_u)$ and 
then we deduce
\begin{flalign*}
d(T^{k+1} (z_u),e^{i\varepsilon}T^{k+1} (z_u))
	& = |T^{k+1} (z_u) -e^{i\varepsilon}T^{k+1} (z_u)|
		= |1-e^{i\varepsilon}| \times |T^{k+1} (z_u)| \\
	& \leq 2 |\sin (\varepsilon/2)|  \times (|z_u| + (k+1) \norme{T}).
\end{flalign*}
Thus we obtain
\begin{flalign*}
d(T^{k+1} (z_u),e^{i\varepsilon} \mathbf D)
	&\geq d(e^{i\varepsilon}T^{k+1} (z_u),e^{i\varepsilon} \mathbf D) 
		- d(T^{k+1} (z_u),e^{i\varepsilon} T^{k+1} (z_u))\\
	&= d(T^{k+1} (z_u) , \mathbf D) - d(T^{k+1} (z_u),e^{i\varepsilon}T^{k+1}  (z_u))\\
	&\geq w - 2 |\sin (\varepsilon/2)|  \times (|z_u| + (k+1) \norme{T}) \geq w_\varepsilon.
\end{flalign*}
We have proved our claim by induction.
\qedhere
\end{itemize}
\end{proof}

\begin{figure}
\begin{center}
\includegraphics[width=7cm]{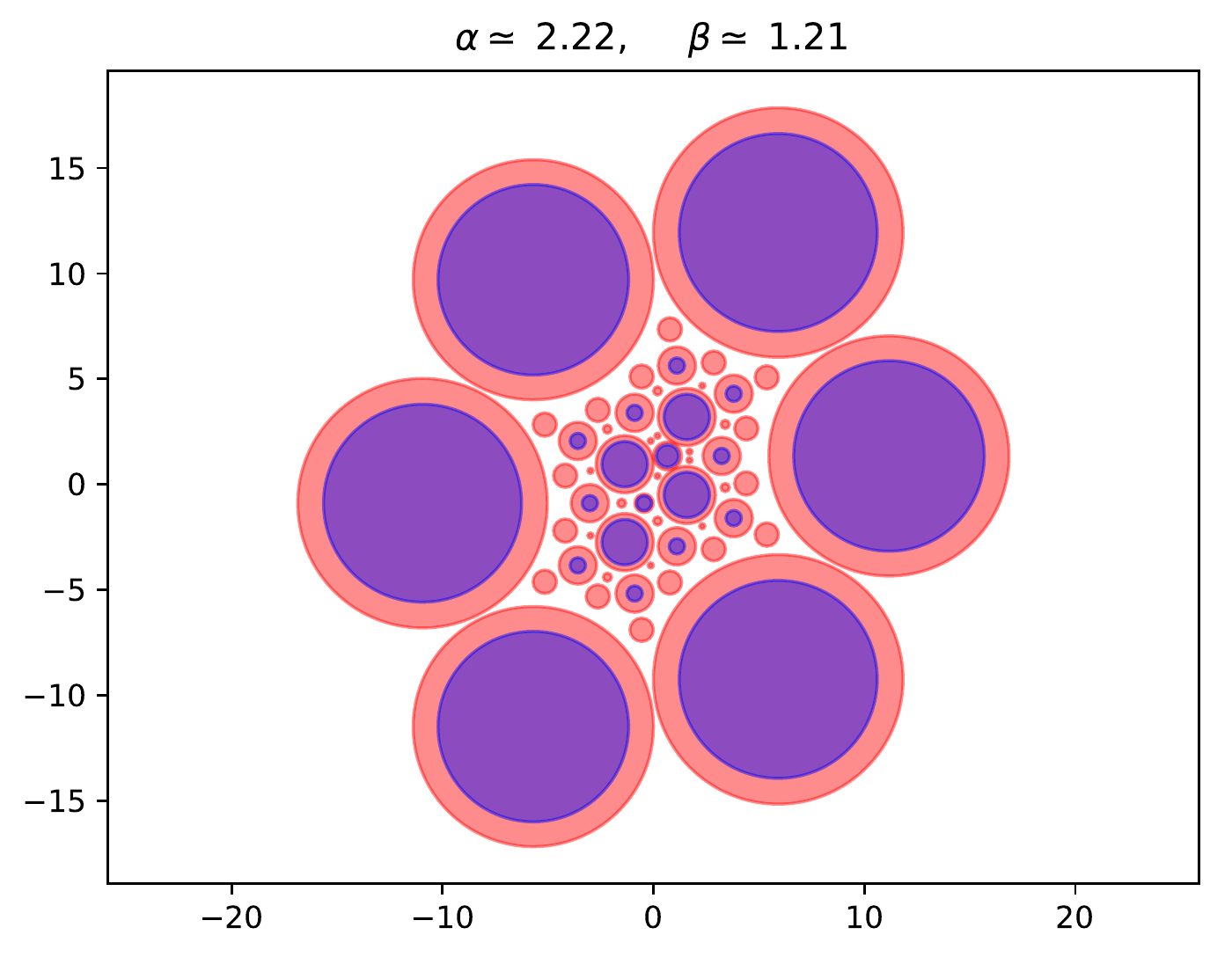}
\includegraphics[width=7cm]{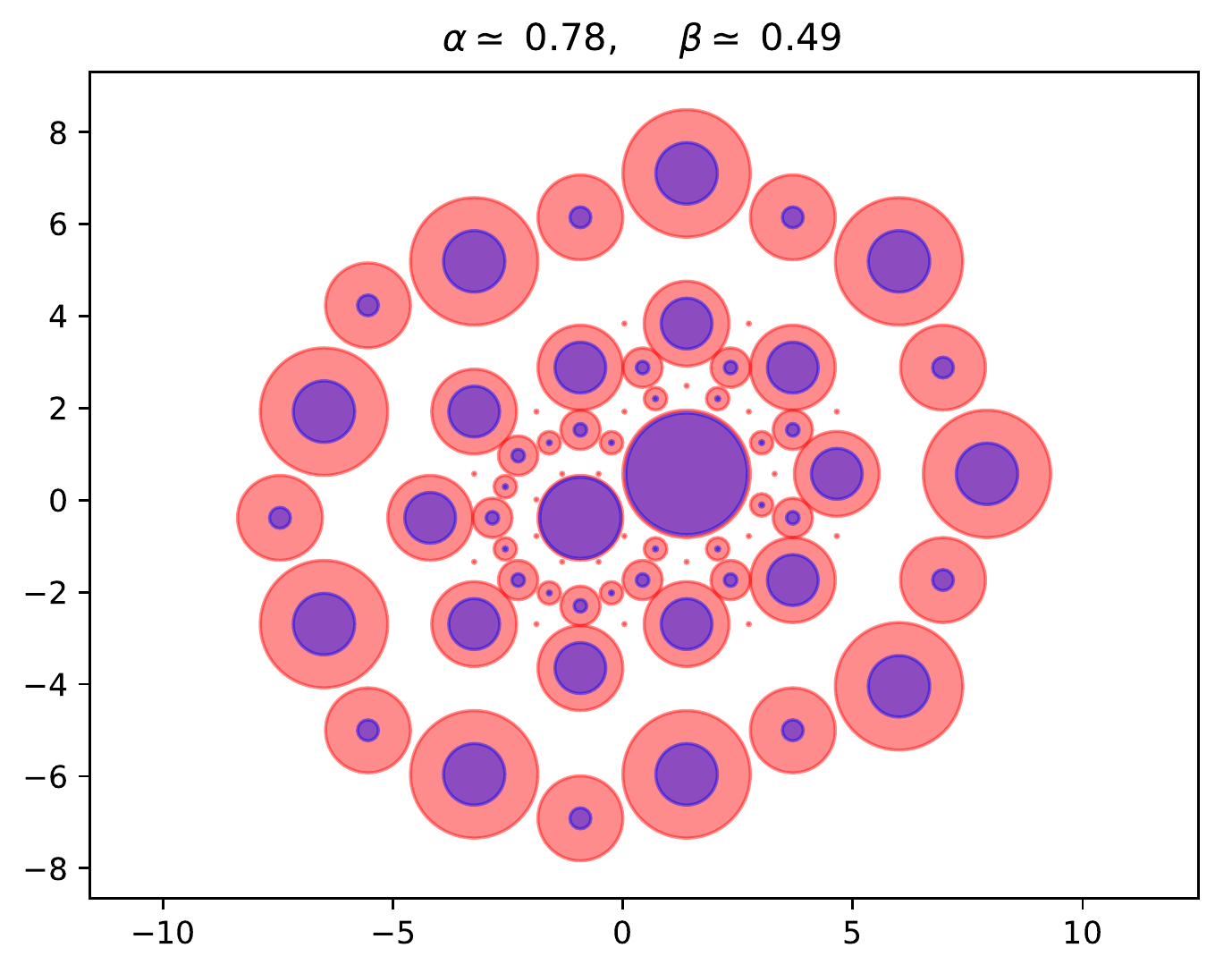}
\caption{Red part: periodic islands for a bijective case.	
	Blue part: periodic islands for a perturbation of the parameter $\gamma$, see
	Proposition \ref{prop:perturbation}.}
	\label{c-1}
\end{center}
\end{figure}

By 
using the conjugacy by $z\mapsto e^{i\varepsilon}z$ and Remark \ref{conjugaison} we can rephrase the previous proposition 
as follow:
\begin{proposition}
\label{co:perturbation} 
Let us consider 
the element $T$ 
of $\mathcal{T}$ with parameters $\alpha$, $\gamma$, $\mathbf D$ and $C_k$.
For all $\varepsilon$, let us denote 
by 
$T_\varepsilon$ the element of $\mathcal{T}$ with the same parameters but with centers $e^{i\varepsilon}C_k$. 

Consider 
a periodic point $z_u$ 
of $T$ of weight $w$ associated to the word $u$ of length $n$. Then for all $\varepsilon$ 
such that 
\[w_\varepsilon=w-2\big(|z_u| +n \norme{T} \big)\times |\sin (\varepsilon/2)|>
0
\]
we have that $e^{i\varepsilon}z_u$ is a periodic point for $T_{\varepsilon}$ 
with 
weight 
bigger or 
equal to $w_\varepsilon$.
\end{proposition}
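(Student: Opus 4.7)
The plan is to deduce Proposition~\ref{co:perturbation} directly from Proposition~\ref{prop:perturbation} by an appropriate conjugacy, exactly as the paragraph introducing the statement suggests. Let $f(z)=e^{i\varepsilon}z$. A short computation with the formula $T(z)=e^{i\alpha}(z-C_j)+C_j$ shows that conjugating any $T\in\mathcal T$ by $f$ multiplies both the line $\mathbf D$ and the centers $C_k$ by $e^{i\varepsilon}$ (this is the content of Remark~\ref{conjugaison} specialized to $\rho=1$, $\theta=\varepsilon$, $b=0$). Therefore, if I apply the conjugation $S\mapsto f\circ S\circ f^{-1}$ to the map from Proposition~\ref{prop:perturbation} built with the parameter $-\varepsilon$, call it $\tilde T_{-\varepsilon}$ (same centers $C_k$, line $e^{-i\varepsilon}\mathbf D$), I obtain a map whose line is $e^{i\varepsilon}\cdot e^{-i\varepsilon}\mathbf D=\mathbf D$ and whose centers are $e^{i\varepsilon}C_k$. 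That map is precisely the $T_\varepsilon$ of the present proposition.

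First, I would apply Proposition~\ref{prop:perturbation} to $\tilde T_{-\varepsilon}$. Since $|\sin(-\varepsilon/2)|=|\sin(\varepsilon/2)|$ and $\norme{\tilde T_{-\varepsilon}}=\norme{T}$ (the triple norm depends only on $\alpha$ and the centers, via Proposition~\ref{propineg}), the quantitative bound $w_\varepsilon=w-2(|z_u|+n\norme{T})|\sin(\varepsilon/2)|$ is unchanged, and I conclude that $z_u$ is a periodic point of period $n$ for $\tilde T_{-\varepsilon}$ with weight at least $w_\varepsilon$.

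Next, I transport this conclusion through the conjugacy. From $T_\varepsilon=f\circ\tilde T_{-\varepsilon}\circ f^{-1}$ I get $T_\varepsilon^k=f\circ\tilde T_{-\varepsilon}^k\circ f^{-1}$ for every $k$, so $T_\varepsilon^k(e^{i\varepsilon}z_u)=e^{i\varepsilon}\tilde T_{-\varepsilon}^k(z_u)$. In particular, $T_\varepsilon^n(e^{i\varepsilon}z_u)=e^{i\varepsilon}z_u$, which gives periodicity. For the weight, I use that $f$ is a Euclidean isometry with $f^{-1}(\mathbf D)=e^{-i\varepsilon}\mathbf D$, hence
\[
d\bigl(T_\varepsilon^k(e^{i\varepsilon}z_u),\mathbf D\bigr)
=d\bigl(f(\tilde T_{-\varepsilon}^k(z_u)),\mathbf D\bigr)
=d\bigl(\tilde T_{-\varepsilon}^k(z_u),e^{-i\varepsilon}\mathbf D\bigr)\geq w_\varepsilon,
\]
which is the desired bound on the weight of $e^{i\varepsilon}z_u$ as a periodic point of $T_\varepsilon$.

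The only real subtlety is the bookkeeping of signs: one has to choose the direction of the rotation $f$ so that conjugating the ``line rotated by $-\varepsilon$'' version of the construction in Proposition~\ref{prop:perturbation} yields the ``centers rotated by $+\varepsilon$'' map of the present statement; this is why the parameter $-\varepsilon$ appears in the intermediate map. Once this is verified, the rest is a mechanical application of the previous proposition together with the invariance of distances under the conjugating rotation.
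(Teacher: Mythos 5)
Your proof is correct and follows exactly the route the paper itself indicates: Proposition \ref{co:perturbation} is deduced from Proposition \ref{prop:perturbation} via the conjugacy $z\mapsto e^{i\varepsilon}z$ of Remark \ref{conjugaison}, and your sign bookkeeping (applying the previous proposition with $-\varepsilon$ so that the conjugated line returns to $\mathbf D$, the centers become $e^{i\varepsilon}C_k$, and distances to the line are preserved) is precisely what is needed to make that one-line indication rigorous. The only cosmetic point is that Proposition \ref{prop:perturbation} is stated for $\varepsilon>0$ while you invoke it at $-\varepsilon$; since its statement and proof depend on $\varepsilon$ only through $|\sin(\varepsilon/2)|$, this is immaterial.
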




We deduce:
\begin{corollary}\label{cor-attracteur-non-borne}
For all $M>0$, there exists a non-bijective piecewise rotation $T\in \mathcal{T}$ such that $\born$ or $\attr$ are non empty and contain a ball of radius $M$.
\end{corollary}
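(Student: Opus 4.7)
The plan is to start from a bijective example with a large periodic island and perturb it, breaking bijectivity while preserving most of the island. The three ingredients already established in the paper --- the bijective examples cited in the introduction, the scaling conjugacy of Remark \ref{conjugaison}, and the perturbation estimates of Proposition \ref{prop:perturbation} combined with Proposition \ref{le:periodic_ba} --- fit together for this purpose.

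First, I would fix a bijective $T_0\in\mathcal T$ (so $\Delta_0=0$) possessing at least one periodic point $z_u$ of some period $n$ with $e^{in\alpha}\neq 1$ and positive weight $w_0>0$. The existence of such $T_0$ is guaranteed by the explicit descriptions of the bijective symmetric case in \cite{Goet.Quas.09} and \cite{Bed.Kab.20}. Next, I would scale: applying the conjugacy of Remark \ref{conjugaison} with $f_0(z)=\rho z$ produces an element of $\mathcal T$ with the same angle $\alpha$, centers $\rho C_j$, associated periodic point $\rho z_u$, weight $\rho w_0$ and $\Delta=\rho\Delta_0=0$. Choose $\rho$ large enough that $\rho w_0 > 2M$ and call the resulting map $\tilde T$.

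Second, I would apply Proposition \ref{prop:perturbation} to $\tilde T$ with a small $\varepsilon\neq 0$, obtaining $T_\varepsilon\in\mathcal T$ whose discontinuity line is $e^{i\varepsilon}\mathbf D$. Two things need checking. Non-bijectivity: the quantity $\Delta$ depends on $\gamma$ through $\cos(\gamma+\alpha/2-\beta)$; at $\tilde T$ this cosine vanishes, so its derivative in $\gamma$ is $\pm 1$, and $\Delta_\varepsilon\neq 0$ for every sufficiently small $\varepsilon\neq 0$. By Lemma \ref{prop:inj-surj}, $T_\varepsilon$ is then either injective non-surjective or surjective non-injective. Persistence of the island: Proposition \ref{prop:perturbation} ensures that $\rho z_u$ is a periodic point for $T_\varepsilon$ of weight at least
\[
w_\varepsilon = \rho w_0 - 2\bigl(\rho|z_u| + n\,\norme{\tilde T}\bigr)|\sin(\varepsilon/2)|,
\]
which tends to $\rho w_0 > 2M$ as $\varepsilon\to 0$. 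Shrinking $\varepsilon$ further if needed, one arranges $w_\varepsilon > M$.

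Finally, Proposition \ref{le:periodic_ba} applied to $T_\varepsilon$ gives $B(\rho z_u, w_\varepsilon) \subset \born(T_\varepsilon) \cap \attr(T_\varepsilon)$, and this ball contains a disk of radius $M$, proving the corollary. The only nontrivial input is the first step, the existence of a bijective $T_0$ with a periodic island of positive weight and period $n$ satisfying $e^{in\alpha}\neq 1$; this is the main obstacle, but it is precisely what the bijective literature quoted in the introduction provides.
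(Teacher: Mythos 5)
Your proof follows essentially the same route as the paper's: start from a bijective map (one with $\Delta=0$, i.e.\ $\beta=\alpha/2+\gamma-\pi/2$) possessing a periodic island, perturb it so that $\Delta\neq 0$ while the island survives via Proposition \ref{prop:perturbation}, and conclude with Proposition \ref{le:periodic_ba}. Your version is in fact slightly more complete than the paper's two-line argument, since the explicit scaling step by $\rho$ is what actually guarantees a ball of radius $M$ (the perturbed weight $w_\varepsilon$ never exceeds the original weight), a point the paper leaves implicit.
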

\begin{proof}
We apply Proposition \ref{co:perturbation} 
with $\beta=\alpha/2+\gamma-\pi/2$. In this case we have a bijective map, thus it has some periodic islands. 
Hence 
for some $\varepsilon>0$ we can find some non-bijective maps $T_\varepsilon$ such that  $\born$ or $\attr$ are non-empty and we apply Proposition \ref{le:periodic_ba} to conclude.
\end{proof}

\section{Size of the limit set for irrational angle} \label{se:irrational}

\subsection{Statement of the result}

Let 
$T = T(\alpha,C_0,C_1,\mathbf D, \gamma)\in \mathcal T$ 
be
\textbf{a  non-bijective map}. Let us denote $\alpha=2\pi a$ and assume 
that 
$a$ is an irrational number. Recall that $M(T)$ is defined in Definition \ref{def:attrrep}.

Let 
$(p_\ell)_{\ell\in \mathbb N}$ and $(q_\ell)_{\ell\in\mathbb N}$ 
be 
the sequence of convergents of $a$ in the continued fraction expansion.
Let $\ell_0$ be such that
$\frac{1}{q_{\ell_0}} \left(  \left(8+ \frac{\pi}{2q_{\ell_0}} \right) \norme{T} +4 |\Delta|  \right) < \frac{|\Delta|}{\pi}$. Then we will prove
the following:

\begin{theoreme}\label{thm:sec-irrat}
Let $T\in\mathcal T$ be a non-bijective map with an irrational angle.
We obtain the following bound:
$$M(T)\leq q_{\ell_0} \norme{T} \left(\frac2{\pi} q_{\ell_0}  +1\right)$$
\end{theoreme}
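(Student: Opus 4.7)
The strategy is to show that, under the hypothesis on $\ell_0$, one application of $T^{q_{\ell_0}}$ strictly contracts the modulus $|z|$ as soon as $|z|>M:=q_{\ell_0}\norme{T}(\tfrac{2}{\pi}q_{\ell_0}+1)$. I treat the surjective case $\Delta>0$ (the injective case $\Delta<0$ is symmetric, and by Theorem \ref{thm:BG-rephrase} $T$ is globally attractive). Granted this ``one-period contraction,'' the subsequence $(|T^{nq_{\ell_0}}z|)_n$ decreases by a uniform positive amount whenever it exceeds $M$, hence eventually enters $B(0,M)$; combined with $|T^k z|\leq|z|+k\norme{T}$ from Lemma \ref{equ-rot}, this gives $\limsup_n|T^n z|\leq M$ uniformly on bounded sets, and hence $M(T)\leq M$.

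Fix $z$ with $R:=|z|>M$, set $q=q_{\ell_0}$, $z_k=T^k z$, $R_k=|z_k|$ and $\theta_k=\arg z_k$. By Lemma \ref{equ-rot} every $R_k\geq R-q\norme{T}\geq\tfrac{2}{\pi}q^2\norme{T}$ for $0\leq k\leq q$, so Proposition \ref{propineg} applies throughout the orbit. Telescoping \eqref{majo1} gives
\[R_q-R=\sum_{k=0}^{q-1}g(\theta_k)+E_1,\qquad |E_1|\leq \frac{q\norme{T}^2}{R-q\norme{T}}\leq\frac{\pi\norme{T}}{2q},\]
while iterating \eqref{majo2} yields $|\theta_k-\theta-k\alpha|\leq \pi/(2q)$ for every $0\leq k<q$. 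A direct computation from Definition \ref{def-g} produces $\int_0^{2\pi}g(x)\,dx=-2\Delta$, so the mean value of $g$ on the circle is $-\Delta/\pi$.

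The ergodic heart of the proof is the identification $\sum g(\theta_k)\approx-q\Delta/\pi$. First I replace each $g(\theta_k)$ by $g(\theta+k\alpha)$: since $g$ is piecewise $\norme{T}$-Lipschitz with two jump discontinuities at $\gamma,\gamma+\pi$ of size at most $2\norme{T}$, the Lipschitz bound handles indices lying on the same smooth piece of $g$, while the three-distance theorem---which for $q=q_{\ell_0}$ guarantees that all gap lengths in $\{\theta+k\alpha\bmod 2\pi\}_{0\leq k<q}$ are $O(1/q)$---caps the number of ``straddling'' indices by a universal constant. Then, since $q_{\ell_0}$ is a convergent denominator, the Denjoy-Koksma inequality applied to the function $g$ of bounded variation (total variation at most $8\norme{T}$, split between the two sinusoidal pieces and the two jumps) yields $|\sum g(\theta+k\alpha)+q\Delta/\pi|\leq 8\norme{T}$. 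Assembling the three error estimates produces
\[R-R_q\geq \frac{q\Delta}{\pi}-\left(8+\frac{\pi}{2q}\right)\norme{T}-4|\Delta|,\]
and the defining inequality for $\ell_0$ is exactly the assertion that the right-hand side is strictly positive. The main obstacle is this third step: carefully combining the $C^1$/jump decomposition of $g$ with the three-distance theorem and Denjoy-Koksma, while tracking constants so that the final error bound matches the form $(8+\tfrac{\pi}{2q_{\ell_0}})\norme{T}+4|\Delta|$ appearing in the hypothesis on $\ell_0$.
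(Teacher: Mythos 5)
Your proposal is correct and follows essentially the same route as the paper: telescoping inequality \eqref{majo1}, controlling the angular drift via \eqref{majo2} together with the three-gap theorem, and applying the Denjoy--Koksma inequality to $g$, exactly as in Proposition \ref{prop1} and Lemma \ref{lem-g}. The only point to tighten is your variation estimate: the paper uses $\Var(g)\leq 2|\Delta|+4\norme{T}$ (the two jumps contribute $2|\Delta|$, the smooth part $4\norme{T}$), and it is this sharper bound, applied once in the three-gap step and once in the Denjoy--Koksma step, that yields exactly the constant $\left(8+\tfrac{\pi}{2q_{\ell_0}}\right)\norme{T}+4|\Delta|$ appearing in the hypothesis on $\ell_0$, whereas your coarser $\Var(g)\leq 8\norme{T}$ would not.
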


To prove this theorem we first need some background on 
the Denjoy-Kosma inequality and the Three gaps theorem. 

\subsection{Bounded variation function and three gaps theorem}

\begin{definition}
Consider a periodic function $f:\mathbb R\to \mathbb R$ of  period $T$. If the following quantity exists 
we say that $f$ has bounded variation with variation $\Var(f)$:

\[
	\Var(f) = \sup\left\{ \sum_{k=0}^{P-1} |f(x_{k+1})-f(x_k)| ;  \{x_0,\ldots,x_{P}\} \in \mathcal P\right\}.
\]
where $\mathcal P$ is the set of subdivisions $ P = \{x_0,\ldots,x_{P-1}\}$ of $[0,T]$
such that
\[
	0=x_0\leq x_1\leq \cdots \leq x_{P-1} \leq x_{P}=T .
\]

\end{definition}

\begin{proposition}\label{prop-var}
Let $f:\mathbb R\to \mathbb R$ be a $2\pi$-periodic 
function
which is 
piecewise $\mathcal C^1$.
Assume that the discontinuity points of $f$ on $]0,2\pi[$ are denoted 
by $d_1,\ldots,d_m$, and set $d_0=0$. Then 
$f$ has bounded variation and
\[
	\Var(f) = \int_0^{2\pi} |f'(t)| dt + \sum_{k=0}^m |  f(d_k^+)-  f(d_k^-)|.
\]
\end{proposition}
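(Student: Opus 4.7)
The proof will establish the two inequalities separately, both leveraging the elementary fact that refining a subdivision can only increase $\sum_k |f(x_{k+1})-f(x_k)|$ (by the triangle inequality), so one is free to insert convenient points without loss of generality.

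For the upper bound $\Var(f) \leq \int_0^{2\pi}|f'(t)|\,dt + \sum_{k=0}^m |f(d_k^+)-f(d_k^-)|$, I would start from an arbitrary subdivision $\{x_0,\ldots,x_P\}$ and refine it by inserting, for a small $\varepsilon>0$, two auxiliary points $d_k-\varepsilon$ and $d_k+\varepsilon$ around each discontinuity (wrapping by periodicity near $d_0=0$). After this refinement every subinterval is of one of two types: either it lies entirely inside a smooth piece, in which case the fundamental theorem of calculus gives $|f(x_{i+1})-f(x_i)| \leq \int_{x_i}^{x_{i+1}}|f'(t)|\,dt$, or it is one of the short intervals $[d_k-\varepsilon,d_k+\varepsilon]$, whose contribution is bounded by $|f(d_k+\varepsilon)-f(d_k-\varepsilon)|$. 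Summing and letting $\varepsilon\to 0$ yields $\int_0^{2\pi}|f'|$ on the smooth parts (since the excluded neighbourhoods shrink to measure zero) and $\sum_k |f(d_k^+)-f(d_k^-)|$ on the short parts by definition of one-sided limits.

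For the lower bound I build an explicit sequence of subdivisions realizing the claimed value. For each $\varepsilon>0$, include the points $d_k\pm\varepsilon$ and refine each smooth segment $[d_k+\varepsilon,d_{k+1}-\varepsilon]$ finely enough that the corresponding sum $\sum |f(x_{i+1})-f(x_i)|$ approximates $\int_{d_k+\varepsilon}^{d_{k+1}-\varepsilon}|f'(t)|\,dt$ to within $\varepsilon/(m+1)$. Here I use the classical fact that for a $\mathcal{C}^1$ function $g$ on a closed interval $[a,b]$ one has $\Var(g)=\int_a^b|g'(t)|\,dt$, which follows from the fundamental theorem of calculus combined with uniform continuity of $g'$. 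The short intervals $[d_k-\varepsilon,d_k+\varepsilon]$ again contribute $|f(d_k+\varepsilon)-f(d_k-\varepsilon)| \to |f(d_k^+)-f(d_k^-)|$. Letting $\varepsilon\to 0$ gives the matching lower bound.

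The main obstacle is the interaction between the definition of $\Var(f)$, which uses the actual values $f(x_i)$ at the subdivision points, and the target formula, which features the one-sided limits $f(d_k^{\pm})$; these can differ from $f(d_k)$ itself and the jump need not be captured if a subdivision point lands exactly on a $d_k$. The role of the auxiliary points $d_k\pm\varepsilon$ in both directions of the argument is precisely to bypass this issue by approaching each discontinuity from both sides without hitting it, after which the rest of the argument is routine.
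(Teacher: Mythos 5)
The paper itself offers no proof of Proposition \ref{prop-var}: it is stated as a classical fact and invoked only for the function $g$ of Definition \ref{def-g}, so there is no argument to compare yours against methodologically. Your proof is the standard one — smooth pieces handled by the fundamental theorem of calculus and the identity $\Var(g)=\int_a^b|g'|$ for $\mathcal C^1$ functions, jumps handled by approaching each $d_k$ from both sides — and the lower-bound half is complete and correct (note only that $x_0=0=d_0$ is forced to lie in every subdivision, but there the contribution $|f(\varepsilon)-f(0)|+|f(2\pi)-f(2\pi-\varepsilon)|$ still dominates the jump by the triangle inequality, so nothing breaks).

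There is, however, a genuine gap in the upper bound, and it sits exactly at the point you flag in your last paragraph and then wave away. Refining a subdivision does not remove points: if the arbitrary starting subdivision already contains $d_k$, then after inserting $d_k\pm\varepsilon$ the piece around $d_k$ consists of \emph{two} subintervals, contributing $|f(d_k)-f(d_k-\varepsilon)|+|f(d_k+\varepsilon)-f(d_k)|$, which is not bounded by $|f(d_k+\varepsilon)-f(d_k-\varepsilon)|$ and in the limit exceeds $|f(d_k^+)-f(d_k^-)|$ whenever $f(d_k)$ lies outside the closed interval between the two one-sided limits. In that situation the stated identity is simply false with the paper's definition of $\Var$ (take $f\equiv 0$ except $f(0)=1$: the right-hand side is $0$, yet the subdivision $\{0,\pi,2\pi\}$ yields $2$ by periodicity). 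So the auxiliary points do not ``bypass'' the issue for this direction; you must add the hypothesis that each $f(d_k)$ lies between $f(d_k^-)$ and $f(d_k^+)$ (equivalently, work with the essential variation), after which the two-term contribution telescopes to the jump and your argument closes. This hypothesis holds for the function $g$ to which the paper applies the proposition, since $g$ agrees with one of its two analytic expressions on either side of each discontinuity, so the result is correct as used; but as a freestanding proof of the proposition as literally stated, this step needs the extra assumption made explicit.
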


We associate to any irrational positive real number 
$0<a<1$ 
its continued fraction expansion:
\[
a = 0 + \frac{1}{\displaystyle a_1 
          + \frac{1}{\displaystyle a_2 + \frac{1}{\ddots}}}
\mbox{ that we denote by $ a = [ 0 ,a_1, \dots , a_\ell , \dots ]$. }
\]
We also consider two sequences of integers $(p_k)_{k\in\mathbb N}$
and $(q_k))_{k\in\mathbb N}$, such that for any integer 
$\ell \geq 0$ one has 
$[ 0,a_1, \dots , a_\ell] = \frac{p_\ell}{q_\ell}$.
They are defined by induction as follows,

\[
\left\{ 
\begin{array}{ll}
p_{-2} = 0 \\
q_{-2} = 1
\end{array}
\mbox{  ,  }
\begin{array}{ll}
p_{-1} = 1 \\
q_{-1} = 0
\end{array}
\right.
\mbox{ and for any integer }\ell \geq0:
\left\{
\begin{array}{ll}
p_{\ell} = a_\ell p_{\ell-1}  + p_{\ell-2} .\\
q_{\ell} = a_\ell q_{\ell-1}  + q_{\ell-2} .
\end{array}
\right.
\]

We recall Denjoy–Koksma inequality:
\begin{theoreme}\cite{Kuip.Nied.74}\label{thm-DK}
Let $a\in[0,1[$ be an irrational number, 
and 
let us denote 
by 
$\frac{p_\ell}{q_\ell}$ %
the sequence of partial quotients in the continued fraction expansion of $a$. Let $f:\mathbb R\to \mathbb R$ be a $2\pi$-periodic function, piecewise  $\mathcal C^1$. 
Then for all $x\in \mathbb R$ 
and for all $\ell\in \mathbb N^*$
one has:
\[
	\left| \frac{1}{q_\ell}\sum_{k=0}^{q_\ell-1} f(x+2\pi ka) - \frac{1}{2\pi} \int_{0}^{2\pi} f(t)dt \right| 
		\leq \frac{1}{q_\ell}\Var (f). 
\]
\end{theoreme}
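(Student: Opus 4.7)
The plan is to exhibit a strict monotone drift in modulus over a block of $q_{\ell_0}$ iterations. More precisely, I will show that for every $z$ with $|z|>q_{\ell_0}\norme{T}(2q_{\ell_0}/\pi+1)$, the sign of $|T^{q_{\ell_0}}(z)|-|z|$ agrees with the sign of $-\Delta$, with a definite lower bound on the magnitude of the drift. In the surjective non-injective case ($\Delta>0$, Lemma~\ref{prop:inj-surj}) this yields global attraction; in the injective non-surjective case ($\Delta<0$) the same estimate read with the opposite sign yields global repulsion. In both cases the orbits cannot stay outside the announced ball, giving the bound on $M(T)$.

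The backbone is the telescoping identity
\[
	|T^{q_{\ell_0}}(z)|-|z| \;=\; \sum_{k=0}^{q_{\ell_0}-1}\bigl(|T^{k+1}(z)|-|T^k(z)|\bigr),
\]
in which Proposition~\ref{propineg}~\eqref{majo1} replaces each summand by $g(\theta_k)$, with $\theta_k=\arg T^k(z)$, up to a radial error bounded by $\norme{T}^2/(|T^k(z)|-\norme{T})$. Lemma~\ref{equ-rot} gives $|T^k(z)|\ge|z|-k\norme{T}$, so under the standing hypothesis on $|z|$ the cumulative radial error is of lower order. Next I replace each $g(\theta_k)$ by the ideal value $g(\theta+k\alpha)$. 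The angular inequality~\eqref{majo2} iterated gives $|\theta_k-(\theta+k\alpha)|\le k\norme{T}/(|z|-k\norme{T})$, and on each smooth piece $g$ is Lipschitz with constant at most $\norme{T}$. The delicate point is to ensure that the perturbations $\theta_k$ do not straddle the two discontinuities $\gamma,\gamma+\pi$ of $g$, since a jump crossing would ruin the Lipschitz control. Here the three gap theorem is invoked on the orbit $\{\theta+k\alpha\bmod 2\pi:0\le k<q_{\ell_0}\}$ augmented by $\{\gamma,\gamma+\pi\}$: since $q_{\ell_0}$ is a continued-fraction denominator, the resulting minimum gap on the circle is of order $\pi/q_{\ell_0}$, and the defining inequality of $\ell_0$ is calibrated exactly so that the angular drift stays below this threshold, forcing every $\theta_k$ to lie on the same side of every discontinuity as $\theta+k\alpha$.

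With this pointwise substitution secured, Denjoy--Koksma (Theorem~\ref{thm-DK}) handles the ideal Birkhoff sum. A direct computation from Definition~\ref{def-g}, splitting the integral into its two half-period arcs and using $C_1-C_0=|C_1-C_0|e^{i\beta}$, gives
\[
	\frac{1}{2\pi}\int_0^{2\pi} g(t)\,dt \;=\; -\frac{\Delta}{\pi},
\]
while Proposition~\ref{prop-var} combined with $\norme{T}=2|\sin(\alpha/2)|\max(|C_0|,|C_1|)$ yields a bound on $\Var(g)$ (the two smooth sine arcs contributing $\le 8\norme{T}$ and the two jumps controlled by $|\Delta|$ up to a factor). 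Theorem~\ref{thm-DK} then delivers $\sum_{k<q_{\ell_0}} g(\theta+k\alpha) \le -q_{\ell_0}\Delta/\pi + \Var(g)$. Adding the radial and angular Lipschitz errors, the right-hand side of the telescoping identity is bounded by $-q_{\ell_0}\Delta/\pi$ plus a quantity whose smallness is precisely the content of the hypothesis $\frac{1}{q_{\ell_0}}\bigl((8+\pi/(2q_{\ell_0}))\norme{T}+4|\Delta|\bigr)<|\Delta|/\pi$ on $\ell_0$. Thus $|T^{q_{\ell_0}}(z)|<|z|$ for every $z$ outside the target ball, and iterating this strict contraction block by block while bounding the within-block oscillation via Lemma~\ref{equ-rot} (at most $q_{\ell_0}\norme{T}$ per block) confines every orbit inside $B(0,q_{\ell_0}\norme{T}(2q_{\ell_0}/\pi+1))$.

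The hardest point is the simultaneous management of the two error sources: the radial error decays like $1/|z|$ while the angular drift grows linearly in the block length, so only by taking $n=q_{\ell_0}$ a continued-fraction denominator—where Denjoy--Koksma is sharp at scale $\Var(g)/q_{\ell_0}$ and the three gap theorem supplies a minimum-gap safeguard of order $1/q_{\ell_0}$—can both errors be simultaneously dominated by the leading drift $-q_{\ell_0}\Delta/\pi$ one needs to extract.
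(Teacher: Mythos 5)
Your proposal does not prove the statement it was assigned. The statement is Theorem \ref{thm-DK}, the Denjoy--Koksma inequality: a purely arithmetic/ergodic estimate comparing the Birkhoff sum $\frac{1}{q_\ell}\sum_{k=0}^{q_\ell-1}f(x+2\pi ka)$ of a bounded-variation function along an irrational rotation with its mean, with error $\Var(f)/q_\ell$. What you have written is instead a sketch of Theorem \ref{thm:sec-irrat} (the bound on $M(T)$ for irrational angle), i.e.\ essentially the content of Proposition \ref{prop1} and the concluding subsection of Section \ref{se:irrational}. Worse, your argument explicitly \emph{invokes} Theorem \ref{thm-DK} (``Denjoy--Koksma (Theorem \ref{thm-DK}) handles the ideal Birkhoff sum''), so read as a proof of Theorem \ref{thm-DK} it is circular. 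The paper itself does not prove this theorem; it cites it from Kuipers--Niederreiter.

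A proof of the actual statement has nothing to do with $T$, $\Delta$, or $\norme{T}$. The standard route is: since $q_\ell$ is a convergent denominator, the orbit points $x+2\pi ka$, $0\le k<q_\ell$, hit each of the $q_\ell$ arcs of a suitable partition of the circle into intervals of length $2\pi/q_\ell$ exactly once (a consequence of $\|q_{\ell-1}a\|<1/q_\ell$ and the three-distance structure, Theorem \ref{thm-3l}); one then writes $\frac{1}{2\pi}\int_0^{2\pi}f = \frac{1}{q_\ell}\sum_j \frac{q_\ell}{2\pi}\int_{I_j}f$ and bounds $|f(x+2\pi k_j a)-\frac{q_\ell}{2\pi}\int_{I_j}f|$ by the oscillation of $f$ on $I_j$ (plus the adjacent arc containing the sample point), and sums these oscillations to at most $\Var(f)$. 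None of that appears in your text.
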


The following theorem has been proved by V. Sos: 
\begin{theoreme}[Three gaps theorem]\cite{Aless.Bert.98}\label{thm-3l}
Let $0 < a < 1$ be an irrational number and $\ell$ a positive integer. 
Let us denote 
by $p_\ell/q_\ell$ a partial quotient.
The points $\{ \{ka\}, 0 \leq k \leq q_\ell-1\}$
partition the unit circle into $q_\ell$ intervals. 
The lengths of 
the latter take on 
three values, 
and the minimal value is equal to $|q_{\ell-1} a-p_{\ell-1}|\geq \frac{1}{2q_\ell}$.
\end{theoreme}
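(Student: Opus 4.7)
My plan combines three ingredients: the one-step asymptotics of Proposition \ref{propineg} (which linearises $|T(z)|-|z|$ by $g(\arg z)$ up to an $O(1/|z|)$ error), Denjoy-Koksma averaging (Theorem \ref{thm-DK}) applied to $g$ along the rotation $\theta\mapsto\theta+\alpha$ at the convergent $q_{\ell_0}$, and the Three Gaps theorem (Theorem \ref{thm-3l}) to control the drift of the actual arguments $\arg T^k(z)$ relative to the pure rotation $\theta_0+k\alpha$. I will treat the surjective non-injective case $\Delta>0$; the injective non-surjective case $\Delta<0$ is symmetric, applied to repulsion.

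Two preparatory facts about $g$ are needed. First, splitting $\int_0^{2\pi} g(t)\,dt$ over the two arcs $(\gamma,\gamma+\pi)$ and $(\gamma+\pi,\gamma+2\pi)$ and combining the two cosine terms yields
\[
	\frac{1}{2\pi}\int_{0}^{2\pi} g(t)\,dt = -\frac{\Delta}{\pi},
\]
so the mean of $g$ carries the sign of $-\Delta$. Second, Proposition \ref{prop-var} applied to the piecewise $\mathcal C^{1}$ function $g$, which has exactly two jumps at $\gamma$ and $\gamma+\pi$, gives $\Var(g)\leq 8\,\norme{T}$: the smooth parts contribute $4\,\norme{T}$ from $\int|g'|$ and the two jumps contribute at most $4\,\norme{T}$ together. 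Denjoy-Koksma at level $\ell_0$ then produces
\[
	\Bigl|\sum_{k=0}^{q_{\ell_0}-1} g(\theta+k\alpha) + \frac{q_{\ell_0}\Delta}{\pi}\Bigr| \leq 8\,\norme{T},\qquad\theta\in\mathbb R.
\]

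Now fix $z\in\mathbb C$ with $|z|\geq R:=q_{\ell_0}\norme{T}(\tfrac{2}{\pi}q_{\ell_0}+1)$, set $n=q_{\ell_0}$, $\theta_k=\arg T^k(z)$ and $\phi_k=\theta_0+k\alpha$. Summing \eqref{majo1} gives $|T^n(z)|-|z|=\sum_{k=0}^{n-1}g(\theta_k)+E_{\text{rad}}$ with $|E_{\text{rad}}|\leq\sum_k\norme{T}^{2}/(|T^k(z)|-\norme{T})$, and iterating \eqref{majo2} yields $|\theta_k-\phi_k|\leq\sum_{j<k}\norme{T}/(|T^j(z)|-\norme{T})$. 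The choice of $R$ is precisely what makes these two error sums small and, crucially, what makes the maximum angular drift strictly less than $\pi/(2q_{\ell_0})$. By the Three Gaps theorem the $q_{\ell_0}$ points $\phi_k\bmod 2\pi$ have minimum spacing at least $\pi/q_{\ell_0}$, hence at most two of them per discontinuity can lie within that drift distance of a jump of $g$; for the other indices the segment from $\phi_k$ to $\theta_k$ avoids the jumps and one estimates $|g(\theta_k)-g(\phi_k)|$ by the Lipschitz constant of the smooth pieces times the drift. Summing gives a Lipschitz contribution bounded by $(\pi/(2q_{\ell_0}))\,\norme{T}$ and a jump contribution from the at most four bad indices bounded by $4|\Delta|$. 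Combined with the Denjoy-Koksma remainder $8\,\norme{T}$ and the radial error $E_{\text{rad}}$, the hypothesis defining $\ell_0$ is exactly what forces
\[
	|T^n(z)|-|z|\leq -\frac{q_{\ell_0}|\Delta|}{\pi}+\Bigl(8+\frac{\pi}{2q_{\ell_0}}\Bigr)\norme{T}+4|\Delta| < 0.
\]

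Finally, the strict contraction after $n$ steps combined with the crude estimate $|T^k(z)|\leq|z|+k\,\norme{T}$ from Lemma \ref{equ-rot} for the intermediate iterates $0\leq k<n$ yields that $T$ is globally attractive with $M(T)\leq R$, as stated. The main obstacle is the third paragraph: one must simultaneously track the radial linearisation error, the iterated angular drift, the Denjoy-Koksma remainder, and the jump contributions of $g$ so that the four terms combine exactly into the quantity on the left of the hypothesis defining $\ell_0$. The Three Gaps theorem is the indispensable structural tool here, as without the $\pi/q_{\ell_0}$ separation of the $\phi_k$ there could be up to $\Theta(q_{\ell_0})$ bad indices producing a jump error of order $q_{\ell_0}|\Delta|$, which would overwhelm the negative mean term coming from Denjoy-Koksma.
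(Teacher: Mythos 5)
Your proposal does not prove the statement it was assigned. The statement is the Three Gaps Theorem itself (Theorem \ref{thm-3l}): for an irrational $0<a<1$ and a convergent denominator $q_\ell$, the points $\{ka\}$, $0\leq k\leq q_\ell-1$, cut the circle into $q_\ell$ intervals whose lengths take at most three values, the smallest being $|q_{\ell-1}a-p_{\ell-1}|\geq \tfrac{1}{2q_\ell}$. What you have written is instead an outline of the proof of Theorem \ref{thm:sec-irrat} (the bound on $M(T)$ for irrational angle), and in the course of that outline you explicitly \emph{invoke} Theorem \ref{thm-3l} to guarantee the $\pi/q_{\ell_0}$ separation of the points $\phi_k=\theta_0+k\alpha$. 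With respect to the assigned statement this is circular: you cannot establish the Three Gaps Theorem by an argument that takes it as an input. Nothing in your text addresses the combinatorial structure of the orbit $\{ka\}$ on the circle, which is the entire content of the claim.

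For the record, the paper itself offers no proof of Theorem \ref{thm-3l}; it is quoted from \cite{Aless.Bert.98} as a classical result of V.~S\'os. A genuine proof would run along completely different lines: one analyses the gap structure of $\{ \{ka\} : 0\leq k\leq N-1\}$ via the continued fraction expansion of $a$ (e.g.\ by the standard induction on the insertion of new points, or via the Ostrowski representation), identifies the minimal gap for $N=q_\ell$ as $\|q_{\ell-1}a\| = |q_{\ell-1}a-p_{\ell-1}|$, and then concludes with the classical inequality $|q_{\ell-1}a-p_{\ell-1}|>\frac{1}{q_\ell+q_{\ell-1}}\geq\frac{1}{2q_\ell}$. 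None of the machinery you deploy (Proposition \ref{propineg}, Denjoy--Koksma, the function $g$) is relevant to this statement.
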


\subsection{Auxiliary function}

\begin{lemma}\label{lem-g}
Consider the map $g$
from 
Definition \ref{def-g}. We have the following properties:
\begin{itemize}
\item The map $g$ is of bounded variation and we have $\Var(g) \leq 2|\Delta|+4 \norme{T}$.
\item $\displaystyle\int_{-\pi}^{\pi} g(x) d x = -2\Delta$.
\end{itemize}
\end{lemma}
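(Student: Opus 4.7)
The plan is to apply Proposition~\ref{prop-var} to $g$. Observe that $g$ is $2\pi$-periodic and piecewise $\mathcal{C}^1$: it is smooth on each of the open arcs $(\gamma,\gamma+\pi)$ and $(\gamma+\pi,\gamma+2\pi)$ modulo $2\pi$, and discontinuous only at the two boundary points $x=\gamma$ and $x=\gamma+\pi$ where the piecewise definition swaps $C_0$ and $C_1$.

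For the integral identity $\int_{-\pi}^{\pi} g = -2\Delta$, I would split the interval at $\gamma$ and $\gamma+\pi$ using $2\pi$-periodicity, so that on each subinterval $g$ is a pure sinusoid with explicit antiderivative $2|C_j|\sin(\alpha/2)\cos(x+\alpha/2-c_j)$. Since $\cos(\theta+\pi)=-\cos(\theta)$, each piece contributes $\mp 4|C_j|\sin(\alpha/2)\cos(\gamma+\alpha/2-c_j)$. Writing $|C_j|\cos(\gamma+\alpha/2-c_j)=\re\bigl(\overline{C_j}\,e^{i(\gamma+\alpha/2)}\bigr)$ and using $\overline{C_1}-\overline{C_0}=\overline{C_1-C_0}=|C_1-C_0|e^{-i\beta}$, the two contributions combine into $4\sin(\alpha/2)|C_1-C_0|\cos(\gamma+\alpha/2-\beta)$, which equals $-2\Delta$ by the definition recalled in the notations.

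For the variation bound, Proposition~\ref{prop-var} gives
\[
\Var(g)=\int_0^{2\pi}|g'(t)|\,dt + |g(\gamma^+)-g(\gamma^-)| + |g((\gamma+\pi)^+)-g((\gamma+\pi)^-)|.
\]
The derivative on each piece is $g'(x)=-2|C_j|\sin(\alpha/2)\cos(x+\alpha/2-c_j)$. Since $|\cos|$ has period $\pi$ with $\int_0^\pi|\cos|=2$, the integral of $|g'|$ over $[\gamma,\gamma+\pi]$ is $4|C_0||\sin(\alpha/2)|$, and symmetrically $4|C_1||\sin(\alpha/2)|$ over the complementary arc, yielding $\int_0^{2\pi}|g'|\,dt=4|\sin(\alpha/2)|(|C_0|+|C_1|)\leq 4\norme{T}$ via Equation~\eqref{majo3}. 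Each of the two jumps is computed by the same complex manipulation as for the integral, but extracting the imaginary part instead: both equal $2\sin(\alpha/2)|C_1-C_0|\sin(\gamma+\alpha/2-\beta)$. The remaining step is to bound the sum of their absolute values by $2|\Delta|$.

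The main obstacle is precisely this jump bound: $|\Delta|$ carries the factor $|\cos(\gamma+\alpha/2-\beta)|$, whereas the jumps carry the orthogonal factor $|\sin(\gamma+\alpha/2-\beta)|$, so they are perpendicular projections of the same vector $(C_1-C_0)(e^{i\alpha}-1)$. Reconciling them requires either a finer decomposition of $g$ that pushes the tangential jump content into the $\int|g'|$ budget while keeping the normal component in the jumps (so that the latter is exactly $\Delta$), or a direct geometric argument tied to the strip-width characterization of $|\Delta|$ in Lemma~\ref{prop:inj-surj}.
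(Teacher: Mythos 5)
Your integral computation and your bound $\int_0^{2\pi}|g'|\le 4|\sin(\alpha/2)|(|C_0|+|C_1|)\le 4\norme{T}$ are both correct and coincide with the paper's argument. The obstacle you flag at the end is also genuine --- and it is not a defect of your method but of the paper's own proof. Each jump of $g$ equals $2\sin(\alpha/2)\,|C_1-C_0|\,\sin(\gamma+\alpha/2-\beta)$ exactly as you computed, so the two jumps contribute $4|\sin(\alpha/2)|\,|C_1-C_0|\,|\sin(\gamma+\alpha/2-\beta)|$ to $\Var(g)$; at this precise point the published proof simply writes that this quantity equals $2|\Delta|=4|\sin(\alpha/2)|\,|C_1-C_0|\,|\cos(\gamma+\alpha/2-\beta)|$, with no justification. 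That identification holds only when $|\sin(\gamma+\alpha/2-\beta)|=|\cos(\gamma+\alpha/2-\beta)|$ and is false in general: take $C_0=1$, $C_1=-1$, $\gamma=\pi/2$ and $\alpha$ small, so that $\beta=\pi$ and $\gamma+\alpha/2-\beta=\alpha/2-\pi/2$; then the jump sum is $4\sin(\alpha)=O(\alpha)$ while $2|\Delta|=8\sin^2(\alpha/2)=O(\alpha^2)$, and the stated inequality $\Var(g)\le 2|\Delta|+4\norme{T}$ fails. Consequently neither of the two remedies you sketch can work: the jump content and $|\Delta|$ are orthogonal projections of the same vector, as you observe, and no redistribution between the $\int|g'|$ budget and the jump budget will convert one into the other.

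The honest repair is to give up on expressing the jump term through $\Delta$ and instead bound it by $4|\sin(\alpha/2)|\,|C_1-C_0|\le 4|\sin(\alpha/2)|(|C_0|+|C_1|)\le 4\norme{T}$, which yields the correct (weaker) estimate $\Var(g)\le 8\norme{T}$. This is harmless downstream: Lemma \ref{lem-g} enters the argument only through the value of $\Var(g)$ in terms $(B)$ and $(C)$ of Proposition \ref{prop1}, so the final bound there becomes $\frac{1}{q_\ell}\left(\left(16+\frac{\pi}{2q_\ell}\right)\norme{T}\right)$ in place of $\frac{1}{q_\ell}\left(\left(8+\frac{\pi}{2q_\ell}\right)\norme{T}+4|\Delta|\right)$, and the definition of $\ell_0$ and the statement of Theorem \ref{thm:sec-irrat} go through unchanged in structure. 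So: your calculations are all right, your diagnosis of the $\sin$-versus-$\cos$ mismatch is exactly where the paper's proof is broken, and the missing step in your write-up should be replaced by the crude bound above rather than by an attempt to recover $2|\Delta|$.
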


\begin{proof}
$\;$
\begin{itemize}
\item The map $g$ has two 
discontinuities, in $\gamma$ and $\gamma+\pi$,
and we have
\begin{flalign*}
&|g(\gamma^+)-g(\gamma^-)|+|g(\gamma+\pi^+)-g(\gamma+\pi^-)|\\
&\quad = 2|\sin(\alpha/2)| \times \big| |C_0| \sin(\gamma+\alpha/2-c_0)-|C_1| \sin(\gamma+\alpha/2-c_1)\big|\\
&\quad \quad +2|\sin(\alpha/2)| \times \big| |C_0| \sin(\gamma+\pi+\alpha/2-c_0)-|C_1| \sin(\gamma+\pi+\alpha/2-c_1)\big|\\
&\quad = 2|\sin(\alpha/2)| \times \big| \im(\bar C_0e^{i(\gamma+\alpha/2)})
	-\im(\bar C_1e^{i(\gamma+\alpha/2)})\big|\\
&\quad \quad + 2|\sin(\alpha/2)| \times \big| \im(\bar C_0e^{i(\gamma+\pi+\alpha/2)})
	-\im(\bar C_1e^{i(\gamma+\pi+\alpha/2)})\big|\\
&\quad = 4|\sin(\alpha/2)| \times |C_0-C_1| \times |\sin(\gamma + \alpha/2 - \beta)|\\
&\quad = 2 |\Delta|.
\end{flalign*}

We obtain for the computation of the integral
\begin{flalign*}
&\int_{\gamma-\pi}^{\gamma} |g'(x)| d x+\int_{\gamma}^{\gamma+\pi} |g'(x)| d x\\
	&=2|C_1| |\sin(\alpha/2)|\int_{\gamma-\pi}^{\gamma} |\cos(x+\alpha/2-c_1)| d x
		+2|C_0| |\sin(\alpha/2)| \int_{\gamma}^{\gamma+\pi} |\cos(x+\alpha/2-c_0)|d x\\
	&= 4|\sin(\alpha/2)|(|C_0|+|C_1|) \leq 8|\sin(\alpha/2)|\max(|C_0|,|C_1|).
\end{flalign*}
We use \eqref{majo1} in Proposition \ref{propineg} and Proposition \ref{prop-var} to conclude at the first point.

\item We obtain:
\begin{flalign*}
\int_{-\pi}^{\pi} g(x) d x
	&= \int_{\gamma-\pi}^{ \gamma} g(x) d x+\int_{\gamma}^{\gamma+\pi} g(x) d x\\
	&=-2|C_1| \sin(\alpha/2)\int_{\gamma-\pi}^{\gamma} \sin(x+\alpha/2-c_1) d x
		-2|C_0| \sin(\alpha/2) \int_{\gamma}^{\gamma+\pi} \sin(x+\alpha/2-c_0)d x\\
	&=2|C_1|\sin(\alpha/2) \left[ \cos(x+\alpha/2-c_1)  \right]_{\gamma-\pi}^{\gamma} 
	+2|C_0|\sin(\alpha/2) \left[ \cos(x+\alpha/2-c_0)  \right]_{\gamma}^{\gamma+\pi} \\
	&=4|C_1|\sin(\alpha/2)  \cos(\gamma+\alpha/2-c_1) 
		- 4|C_0|\sin(\alpha/2)  \cos(\gamma+\alpha/2-c_0)   \\	
	&=4|C_1-C_0|\sin(\alpha/2)  \cos(\gamma+\alpha/2-\beta)  
\end{flalign*}
where 
we use the fact that
$\re(\bar C_1e^{i(\alpha/2+\gamma)}) - \re(\bar C_0e^{i(\alpha/2+\gamma)}) 
	= \re\left(\overline{(C_1-C_0)}e^{i(\alpha/2+\gamma)}\right)$.
\end{itemize}
\end{proof}

Now we state the key part in the proof of Theoem \ref{thm:sec-irrat}.

\begin{proposition}\label{prop1}
Consider $\ell \in \mathbb N, z\in \mathbb{C}$ and 
assume $|z|> q_\ell \norme{T} \left(\frac2{\pi} q_\ell  +1\right)
$. Then we obtain:
\[
	\left| \frac{|T^{q_\ell} z| - |z|}{q_\ell} + \frac{\Delta}{\pi}  \right|
	\leq
	\frac{1}{q_\ell} \left(  (8+ \frac{\pi}{2q_\ell}) \norme{T} +4 |\Delta|  \right).
\]

\end{proposition}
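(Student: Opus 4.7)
The plan is to decompose the target error into three contributions. Writing $\theta_k=\arg(T^kz)$ and introducing the ``unperturbed'' arguments $\tilde\theta_k=\theta_0+k\alpha$, I will compare in turn $|T^{q_\ell}z|-|z|$ to the Birkhoff-type sum $\sum_{k=0}^{q_\ell-1}g(\theta_k)$, then this sum to $\sum_{k=0}^{q_\ell-1}g(\tilde\theta_k)$ along the ideal rotation orbit, and finally apply Denjoy--Koksma to replace the latter by $q_\ell\cdot\tfrac{1}{2\pi}\int_{-\pi}^\pi g=-q_\ell\Delta/\pi$ (using Lemma \ref{lem-g}). Dividing by $q_\ell$ and summing errors should reproduce the stated bound.

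For the first comparison, the hypothesis on $|z|$ yields $|T^kz|-\norme{T}\ge|z|-q_\ell\norme{T}>\tfrac{2}{\pi}q_\ell^2\norme{T}$ for every $0\le k\le q_\ell-1$, placing all iterates in the regime of Proposition \ref{propineg}. Telescoping $|T^{q_\ell}z|-|z|=\sum_{k=0}^{q_\ell-1}(|T^{k+1}z|-|T^kz|)$ and applying \eqref{majo1} term by term produces
\[
\Bigl||T^{q_\ell}z|-|z|-\sum_{k=0}^{q_\ell-1}g(\theta_k)\Bigr|\le \sum_{k=0}^{q_\ell-1}\frac{\norme{T}^2}{|T^kz|-\norme{T}}<\frac{\pi\norme{T}}{2q_\ell}.
\]
Iterating \eqref{majo2} under the same size hypothesis gives $|\theta_k-\tilde\theta_k|\le k\cdot\tfrac{\pi}{2q_\ell^2}\le\tfrac{\pi}{2q_\ell}$ for $k<q_\ell$. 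Here enters the key combinatorial input: by the three gaps theorem (Theorem \ref{thm-3l}), the points $\tilde\theta_k\bmod 2\pi$ are pairwise at distance at least $\pi/q_\ell$, so the closed balls of radius $\pi/(2q_\ell)$ around them have pairwise disjoint interiors. Since each short arc joining $\theta_k$ to $\tilde\theta_k$ lies in such a ball, these arcs are pairwise disjoint on the circle; additivity of variation together with Lemma \ref{lem-g} then yields
\[
\sum_{k=0}^{q_\ell-1}|g(\theta_k)-g(\tilde\theta_k)|\le \Var(g)\le 2|\Delta|+4\norme{T}.
\]

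Finally, writing $\alpha=2\pi a$ and applying Theorem \ref{thm-DK} to the $2\pi$-periodic piecewise $\mathcal C^1$ function $g$, combined with $\frac{1}{2\pi}\int_{-\pi}^\pi g=-\Delta/\pi$ from Lemma \ref{lem-g}, gives
\[
\Bigl|\frac{1}{q_\ell}\sum_{k=0}^{q_\ell-1}g(\tilde\theta_k)+\frac{\Delta}{\pi}\Bigr|\le\frac{\Var(g)}{q_\ell}\le\frac{2|\Delta|+4\norme{T}}{q_\ell}.
\]
Adding the three bounds (the first divided by $q_\ell$) totals exactly $\tfrac{1}{q_\ell}\bigl((8+\tfrac{\pi}{2q_\ell})\norme{T}+4|\Delta|\bigr)$, which is the claim.

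The main obstacle is the middle step. A naive pointwise bound $|g(\theta_k)-g(\tilde\theta_k)|\le 2\|g\|_\infty$ would sum to $O(q_\ell\norme{T})$ and destroy the extra $1/q_\ell$ factor that the statement requires. What saves the argument is that the perturbation $\pi/(2q_\ell)$ is \emph{strictly} smaller than the minimal three-gap $\pi/q_\ell$; this forces the short arcs $[\theta_k,\tilde\theta_k]$ to lie in pairwise disjoint pieces of the circle, so their total $g$-oscillation is controlled by $\Var(g)$ rather than by $q_\ell\|g\|_\infty$.
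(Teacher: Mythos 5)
Your proposal is correct and follows essentially the same route as the paper: the same three-term decomposition (telescoping with \eqref{majo1}, comparison of $\theta_k$ with $\theta_0+k\alpha$ via \eqref{majo2} and the three gaps theorem, then Denjoy--Koksma with Lemma \ref{lem-g}), with the same numerical bounds for each piece. Your explicit disjoint-arcs justification of the middle estimate by $\Var(g)$ is a welcome elaboration of a step the paper leaves implicit, but it is the same underlying idea.
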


\begin{proof}
We will need the following notations.
If $z\in \mathbb C$, then we denote 
by 
$\theta$ its 
argument, 
and for all integer 
$k\geq 0$ we set 
$z_k := T^k(z)$ and $\theta_k := \arg(z_k)$. Moreover we will denote, by abuse of notations, $d(\theta,\theta')=|\theta-\theta' \mod 2\pi|$. 
We obtain:

\begin{flalign*}
\left| \frac{|T^{q_\ell} z| - |z|}{q_\ell} + \frac{\Delta}{\pi} \right|
	&= \left| \frac{1}{q_\ell} \sum_{k=0}^{q_\ell-1} \left(|z_{k+1}|-|z_k| \right) + \frac{\Delta}{\pi} \right|\\
	& \leq  \frac{1}{q_\ell} \sum_{k=0}^{q_\ell-1} \left| |z_{k+1}|-|z_k|-g(\theta_k )\right| 
		+ \left|\frac{1}{q_\ell} \sum_{k=0}^{q_\ell-1} g(\theta_k )  + \frac{\Delta}{\pi} \right|.
\end{flalign*}

By Lemma \ref{equ-rot} and 
equation 
\eqref{majo3} we obtain, if $|z|>\norme{T}q_{\ell}$, that $|z_k|>\norme{T}$ for $0\leq k\leq q_\ell-1$.\newline
Then by 
equation 
\eqref{majo1} we have: 

\begin{flalign*}
&\left| \frac{|T^{q_\ell} z| - |z|}{q_\ell} + \frac{\Delta}{\pi} \right|
	\leq  \frac{1}{q_\ell} \sum_{k=0}^{q_\ell-1}  \frac{\norme{T}^2}{|z_k|-\norme{T}}  
	+ \left|\frac{1}{q_\ell} \sum_{k=0}^{q_\ell-1} g(\theta_k )  + \frac{\Delta}{\pi} \right|\\
&\qquad
	 \leq  
	\underbrace{\frac{1}{q_\ell} \sum_{k=0}^{q_\ell-1}  \frac{\norme{T}^2}{|z_k|-\norme{T}} }_{(A)} 
	+
	\underbrace{ \left|\frac{1}{q_\ell} \sum_{k=0}^{q_\ell-1} g(\theta_k )  - g(\theta_0+k\alpha) \right|}_{(B)}
	 + 
	 \underbrace{\left|\frac{1}{q_\ell} \sum_{k=0}^{q_\ell-1} g(\theta_0+k\alpha)  + \frac{\Delta}{\pi} \right|}_{(C)}.
\end{flalign*}

We will produce upper bound for each term.
\begin{itemize}
\item[$(A)$] 
First 
term:
\[
	\frac{1}{q_\ell} \sum_{k=0}^{q_\ell-1}  \frac{\norme{T}^2}{|z_k|-\norme{T}}  
	\leq \frac{1}{q_\ell} \sum_{k=0}^{q_\ell-1}  \frac{\norme{T}^2}{|z|-(k+1)\norme{T}}  
	 \leq  \frac{\norme{T}^2}{|z|-q_\ell\norme{T}}   .
\]
\item[$(B)$] Second term:
\begin{itemize} %
\item %
By 
equation 
\eqref{majo2} in Proposition \ref{propineg}: if $|z|>\norme{T}$, we obtain
$d(\theta_1,\theta_0+\alpha) \leq \frac{\norme{T}}{|z|-\norme{T}}$.
\item
If $|z|>2\norme{T}$, then $|z_1| = |T(z)| \geq |z|- \norme{T}> \norme{T}$ and we deduce
\begin{flalign*}
d(\theta_2,\theta_0+2\alpha) 
	&\leq d(\theta_{2},\theta_1+\alpha) +d(\theta_1+\alpha,\theta_0+2\alpha)\\
	&\leq d(\theta_{2},\theta_1+\alpha) +d(\theta_1,\theta_0+\alpha)\\
	&\leq  \frac{\norme{T}}{|z_1|-\norme{T}}	+  \frac{\norme{T}}{|z|-\norme{T}} \\
	&\leq  \frac{\norme{T}}{|z|-2\norme{T}}	+  \frac{\norme{T}}{|z|-\norme{T}}	
\end{flalign*}
\item
Now if $|z|>n\norme{T}$, we have $\forall k \in \llbracket 0,n-1 \rrbracket$, $|z_k|>\norme{T}$ and
\begin{flalign*}
d(\theta_n,\theta_0+n\alpha) 
	&\leq \sum_{k=0}^{n-1} d(\theta_{n-k}+k\alpha,\theta_{n-k-1}+(k+1)\alpha) \\
	&\leq \sum_{k=0}^{n-1} d(\theta_{n-k},\theta_{n-k-1}+\alpha) \\
	&\leq \sum_{k=0}^{n-1}  \frac{\norme{T}}{|z_k|-\norme{T}} \\
	&\leq \sum_{k=1}^{n}  \frac{\norme{T}}{|z|-k\norme{T}} \\
	&\leq  \frac{n\norme{T}}{|z|-n\norme{T}} \\
\end{flalign*}
\item
By Theorem \ref{thm-3l} the smallest interval in the subdivision
$\{k\alpha, 0\leq k \leq q_\ell-1\}$ has 
length at least $\frac{2\pi}{2q_\ell}=\frac{\pi}{q_\ell}$.

\item we 
observe 
the following fact:
\begin{flalign*}
(q_\ell-1)  \frac{\norme{T}}{|z|-(q_\ell-1)\norme{T}} < \frac{\pi}{2q_\ell}
	&\iff 
		\frac2{\pi} q_\ell (q_\ell-1)  \norme{T} < |z|-(q_\ell-1)\norme{T}\\
	&\iff 
		|z| > (q_\ell-1)  \norme{T} \left(\frac2{\pi} q_\ell  +1\right).		
\end{flalign*}

Thus if $|z|> (q_\ell-1)  \norme{T} \left(\frac2{\pi} q_\ell  +1\right)$ and $|z|>q_\ell \norme{T}$  then 
for all $0\leq k \leq q_\ell-1$, we obtain
$d(\theta_k,\theta_0+k\alpha) < \frac{\pi}{2q_\ell}$.

\begin{figure}
\begin{center}
\begin{tikzpicture}
\def\taille{2}
\draw (0,0) circle (\taille) ;
\foreach \x in {0,1,...,7}{
	{\pgfmathparse{\taille*cos(\x*360*1.618)}\let\y\pgfmathresult
	{\pgfmathparse{\taille*sin(\x*360*1.618)}\let\z\pgfmathresult
		 \node[rotate=\x*360*1.618,blue] at (\y,\z) {$\blacksquare$} ;
		 \node[right,rotate=\x*360*1.618] at (\y,\z) {$\theta_0+\x\alpha$} ;}}}
\foreach \x in {0,1,...,7}{
	{\pgfmathparse{\taille*cos(\x*360*1.618+(sin(\x)+2)*5 )}\let\y\pgfmathresult
	{\pgfmathparse{\taille*sin(\x*360*1.618+(sin(\x)+2)*5 )}\let\z\pgfmathresult
		 \node[red] at (\y,\z) {$\bullet$} ;
		 \node[right,rotate=\x*360*1.618+(sin(\x)+2)*10] at (\y,\z) {$\theta_{\x}$} ;}}}
\end{tikzpicture}
\end{center}
\caption{
The blue points are the $\{k \alpha\}$ with $0\leq k \leq 7$ and $\alpha = (1+\sqrt 5)/2$. The red points represent the $\theta_k$.
}
\end{figure}
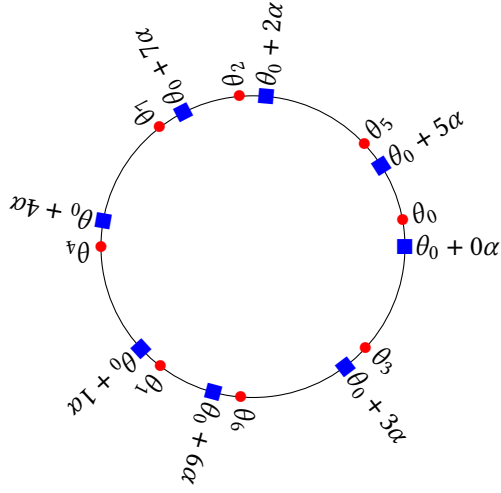
\item
Thus if $|z|> q_\ell \norme{T} \left(\frac2{\pi} q_\ell  +1\right)$, we obtain for the second term:
\[
	\left|\frac{1}{q_\ell} \sum_{k=0}^{q_\ell-1} g(\theta_k )  - g(\theta_0+k\alpha) \right|
	\leq 
	\frac{1}{q_\ell} \Var(g) .	
\]
\end{itemize} %
\item[$(C)$] 
Third term: we use 
the 
Denjoy–Koksma inequality (Theorem \ref{thm-DK}) and Lemma \ref{lem-g}
to deduce:
\[
	\left|\frac{1}{q_\ell} \sum_{k=0}^{q_\ell-1} g(\theta_0+k\alpha)  + \frac{\Delta}{\pi} \right|
	=
	\left|\frac{1}{q_\ell} \sum_{k=0}^{q_\ell-1} g(\theta_0+k\alpha)  
		- \frac{1}{2\pi}\int_{-\pi}^\pi g(x)dx \right|
	\leq 
	\frac{1}{q_\ell} \Var(g) .
\]
\end{itemize}

Thus if we have $|z|> q_\ell \norme{T} \left(\frac2{\pi} q_\ell  +1\right)$,
we deduce 
from Lemma \ref{lem-g} that

\begin{flalign*}
	\left| \frac{|T^{q_\ell} z| - |z|}{q_\ell} + \frac{\Delta}{\pi}  \right|
	&\leq   
\frac{\norme{T}^2}{|z|-q_\ell\norme{T}}   + \frac{2}{q_\ell} \Var(g) \\
	&\leq   
	\frac{\norme{T}^2}{q_\ell \norme{T} \left(\frac2{\pi} q_\ell  +1\right)-q_\ell\norme{T}}   
	+ \frac{2}{q_\ell} \left( 2|\Delta|+4 \norme{T} \right) \\
	&\leq\frac{1}{q_\ell} \left(  \frac{\norme{T}}{\frac2{\pi} q_\ell} +4 |\Delta| +  8 \norme{T}) \right)
	=\frac{1}{q_\ell} \left(  \left(8+ \frac{\pi}{2q_\ell} \right) \norme{T} +4 |\Delta|  \right)
\end{flalign*}
which proves the result.
\end{proof}

\subsection{Conclusion of the proof}

Let $\ell_0$ such that
$\frac{1}{q_{\ell_0}} \left(  \left(8+ \frac{\pi}{2q_{\ell_0}} \right) \norme{T} +4 |\Delta|  \right) < \frac{|\Delta|}{\pi}$ 
and let us denote 
$\epsilon =  \frac{|\Delta|}{\pi} - \frac{1}{q_{\ell_0}} \left(  \left(8+ \frac{\pi}{2q_{\ell_0}} \right) \norme{T} 
	+4 |\Delta|  \right).$

\begin{itemize}
\item
If $\Delta>0$, then by Proposition \ref{prop1} for
$|z|> q_{\ell_0} \norme{T} \left(\frac2{\pi} q_{\ell_0}  +1\right)$
one has:
\[
|T^{q_{\ell_0}} z| \leq |z| - q_{\ell_0} \epsilon
\]
Thus we obtain
\[
	\attr(T^{q_{\ell_0}}) 
	\subset B\left(0,q_{\ell_0} \norme{T} \left(\frac2{\pi} q_{\ell_0}  +1\right) -q_{\ell_0}\epsilon\right)
	\subset B\left(0, q_{\ell_0} \norme{T} \left(\frac2{\pi} q_{\ell_0}  +1\right) \right).
\]

By definition of $\attr(T)$, we obtain 
(see 
Remark \ref{rem:imag}):
\[
	\attr(T) \subset \attr(T^{q_{\ell_0}}) 
	\subset
	B\left(0, q_{\ell_0} \norme{T} \left(\frac2{\pi} q_{\ell_0}  +1\right) \right).
\]
\item
If $\Delta<0$, then by Proposition \ref{prop1} we obtain for 
$|z|>q_{\ell_0} \norme{T} \left(\frac2{\pi} q_{\ell_0}  +1\right)$ :
\[
|T^{q_{\ell_0}} z| \geq |z| + q_{\ell_0} \epsilon
\]

Thus for every integer $n$ we deduce
\[
|T^{nq_{\ell_0}} z| \geq |z| + nq_{\ell_0} \epsilon
\]
and $z\notin B\left(0,q_{\ell_0} \norme{T} \left(\frac2{\pi} q_{\ell_0}  +1\right)\right)$.
Then by Remark \ref{rem:imag} we conclude:
\[
\born(T) \subset B\left(0,q_{\ell_0} \norme{T} \left(\frac2{\pi} q_{\ell_0}  +1\right)\right).
\]
\end{itemize}


\subsection{Example} \label{exemple_cas_irrationnel}

Consider $T$ with the following parameters :
\[
\renewcommand{\arraystretch}{1.5}
\begin{array}{|c|c|c|c|c|c||c|c|c|c|}
\hline
\alpha & a & C_0 & C_1  & \gamma & \mathbf D &
	\beta & \Delta& \norme{T} \\
\hline 
\frac{\sqrt{2}}{2} \pi& \frac{\sqrt{2}}{4}&-e^{1.14i}& 1.5e^{1.14i}& \frac{\pi}{2}& i\mathbb R &
	1.14 & -0.13>\Delta >-0.14 & 2.68<|3\sin(\alpha/2)|\leq 2.69 \\
\hline 
\end{array}
\]

Since $\Delta<0$ the map $T$ is injective and is not bijective.

Notice 
that the continued fraction of $a$ is equal to $a= [0,2,1,4,1,4,1,4,\ldots]$
\[
\begin{array}{|c||c|c|c|c|c|c|c|c|c|c|c|}
\hline
\ell & 0 & 1& 2& 3& 4& 5& 6&7&8& 9 \\
\hline
\hline 
p_\ell&0&1&1&5&6&29&35&169&204&985\\
\hline 
q_\ell&1&2&3&14&17&82&99&478&577&2786\\
\hline
\end{array}
\]
We obtain 
\[
\frac{1}{q_{8}} \left(  \left(8+ \frac{\pi}{2q_{8}} \right) \norme{T} +4 |\Delta|  \right)
<
0.0383
<
\frac{|\Delta|}{\pi}
< 
0.0460
<
\frac{1}{q_{7}} \left(  \left(8+ \frac{\pi}{2q_{7}} \right) \norme{T} +4 |\Delta|  \right)
.
\]
and, see Figure \ref{c-2}, we deduce the bound
$M = q_{8} \norme{T} \left(\frac2{\pi} q_{8}  +1\right) <571283.$ 
Notice that the real size of the set seems to be much smaller.

In fact we can obtain a better bound due to the following remark:
\begin{remark} \label{rem:amelioration}
The bound from $M$ can be improved. In order to avoid technicalities we only explain this for the above 
example.
Consider some conjugation in order to change the origin: let us define $p$ as the point of $\mathbf D$ which minimizes both distances to 
$C_0$ and $C_1$. 
This point exists by a compactness argument. If $p$ is the new origin, then we obtain new coordinates for the centers of rotations, and $\norme{T'}\leq \norme{T}$. In this example we obtain:

The point $p$ is at the intersection of $\mathbf D$ 
with the bisection line 
of $[C_0,C_1]$. This point is 
given by 
$p = \frac{0.25}{\sin(1.14)} i\simeq 0.275i.$
The new centers of rotations are $C'_0 = C_0 -p
	\quad \text{and} \quad
	C'_1 = C_1-p.$ Then the new map fulfills $\norme{T'} = 2\max(|C'_0|,|C'_1|) \sin(\alpha/2) \simeq 2.249.$ We obtain
$\frac{1}{q_{8}} \left(  \left(8+ \frac{\pi}{2q_{8}} \right) \norme{T'} +4 |\Delta|  \right)
<
\frac{|\Delta|}{\pi}$. 
We deduce the new bound
\[
M' =q_{8} \norme{T'} \left(\frac2{\pi} q_{8}  +1\right) <536048<M. 
\]
\end{remark}

\begin{figure}
\begin{center}
\includegraphics[width=7.5cm]{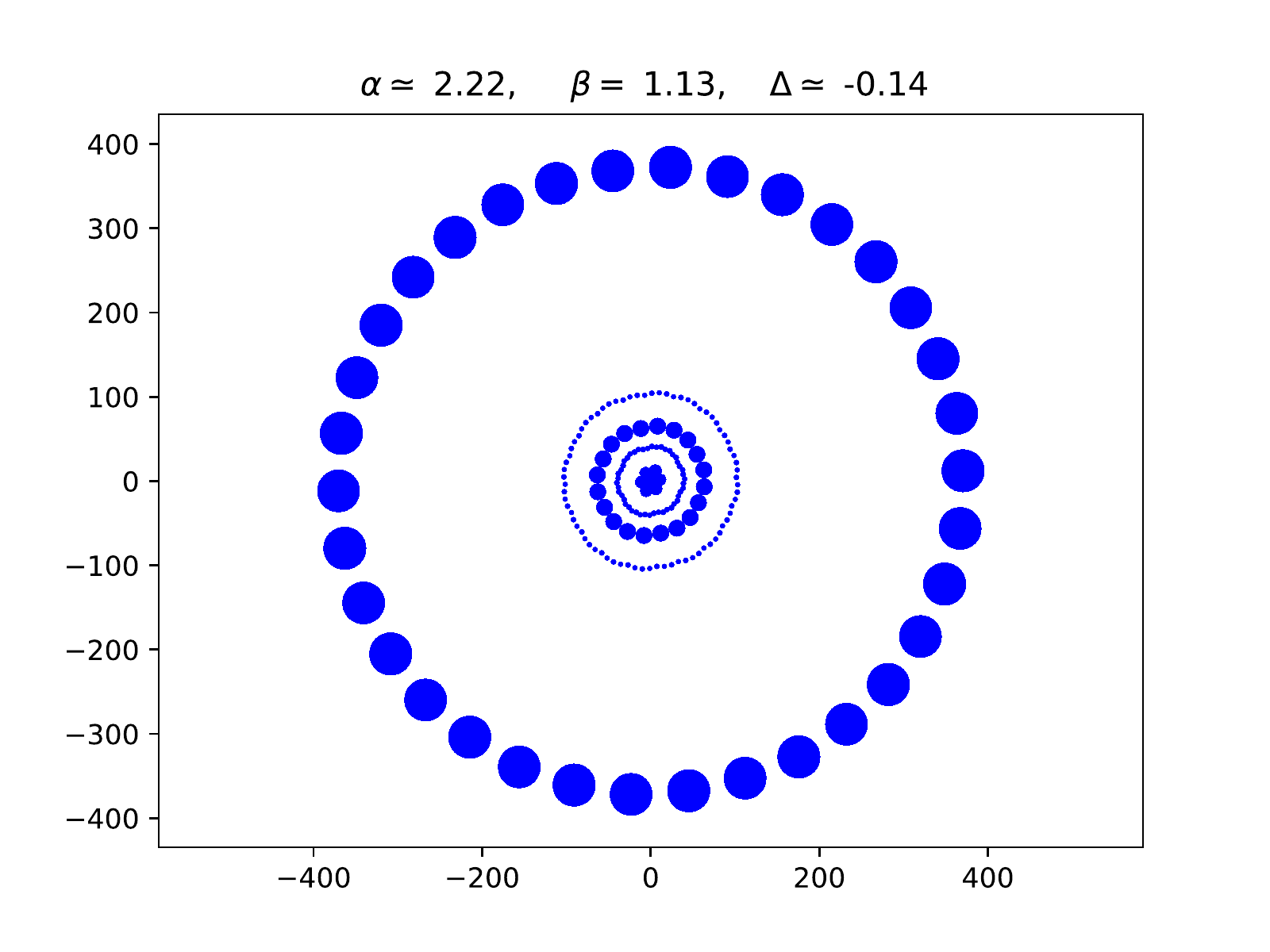}
\includegraphics[width=7.5cm]{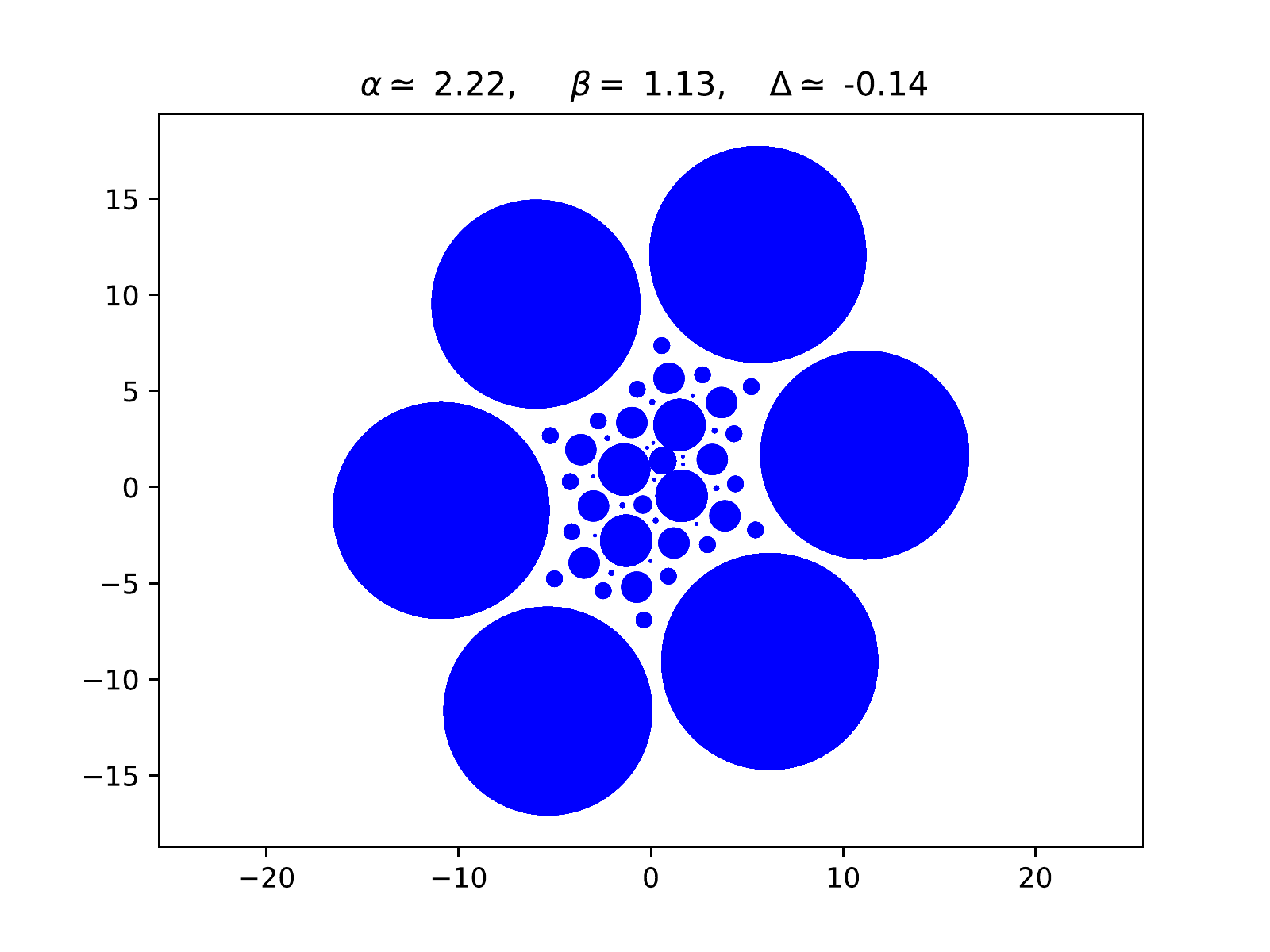}
\caption{Two pictures of the attractive set of the piecewise isometry of Example \ref{exemple_cas_irrationnel}.}\label{c-2}
\end{center}
\end{figure}

\section{Size of the limit set for the rational angle} \label{se:rational}

Now we prove the same type of result for a piecewise rotation of rational angle.

\subsection{Notations and statement}

Consider \textbf{a  non-bijective map} $T = T(\alpha,C_0,C_1,\mathbf D, \gamma)\in \mathcal T$. 
In all of this section we 
will use the following notation:  
$\alpha = \frac{p}{q} 2\pi$ with $q>2$ an even number and $\gcd(p,q)=1$. We will prove:

\begin{theoreme} \label{th:casrat}
Let $T\in\mathcal{T}$ a non-bijective map 
with a rational angle $\alpha$ such that $\alpha = \frac{p}{q} 2\pi$ for some even number $q>2$ 
and $\gcd(p,q)=1$. Then we obtain:

 $$M(T)\leq  q\norme{T}  \left(\frac{1}{ 2\tan(|\beta-\alpha/2 -\gamma +\pi/2 |)} + \frac{1}{2\tan(\pi/q) } +1 \right).$$

\end{theoreme}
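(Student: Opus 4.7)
Plan: Set $\phi = \beta - \alpha/2 - \gamma + \pi/2$, so $\Delta = -2|C_1-C_0|\sin(\alpha/2)\sin\phi$ and non-bijectivity gives $\sin\phi\neq 0$. My strategy is to show that for $|z|$ above the claimed bound $M_0 := q\norme{T}\bigl(\tfrac{1}{2\tan|\phi|} + \tfrac{1}{2\tan(\pi/q)} + 1\bigr)$, the quantity $|T^q z|^2 - |z|^2$ has sign $-\mathrm{sign}(\Delta)$ uniformly in $\theta_0 := \arg z$. This yields $M(T)\leq M_0$ via Definition \ref{def:attrrep} (attractivity if $\Delta>0$, repulsivity if $\Delta<0$).

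I would interpret the bound as the sum of three contributions. The ``$+1$'' is needed so that $|z|>q\norme{T}$ and, by Lemma \ref{equ-rot}, all $q$ iterates stay at positive modulus. The $\tfrac{1}{2\tan(\pi/q)}$ ensures that, after iterating the angular estimate \eqref{majo2} for $q-1$ steps, the cumulative drift $|\arg(T^k z) - (\theta_0 + k\alpha)|\le k\norme{T}/(|z|-k\norme{T})$ stays strictly below $\pi/q$ --- the minimum angular gap of the equispaced set $\{\theta_0 + k\alpha \bmod 2\pi\}$. Under this constraint each iterate $\theta_k$ remains on the same side of the discontinuities $\gamma,\gamma+\pi$ of $g$ as its ideal rotation image $\theta_0+k\alpha$, so the orbit's coding coincides with the ideal coding $(s_k)$ defined by $s_k = +1 \iff \theta_0 + k\alpha \in (\gamma,\gamma+\pi) \bmod 2\pi$.

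With matched coding, equation \eqref{eq:itere:rot} becomes exact. Using $e^{iq\alpha}=1$, $\sum_k e^{ik\alpha}=0$, and the decomposition $C_{u_k} = \tfrac{C_0+C_1}{2}+\tfrac{C_0-C_1}{2}s_k$, the displacement $D_0 := T^q z - z$ reduces to a scalar multiple of $(C_0 - C_1)\sum_k s_k e^{-ik\alpha}$. The signed exponential sum is computable since $q$ is even: the rotated roots $\{e^{i(\theta_0-\gamma+k\alpha)}\}$ split into $q/2$ with $s_k=+1$ (upper half-plane) and their negatives with $s_k=-1$ (lower), so the sum equals twice the sum of $q/2$ consecutive upper $q$-th roots of unity, evaluated via a geometric series. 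This yields $|D_0| = 2\sin(\alpha/2)|C_1-C_0|/\sin(\pi/q)$ and an expression for $\arg(D_0) - \theta_0$ depending on $\theta_0$ only through a parameter $\phi_0 \in [0, 2\pi/q)$. Expanding $|T^q z|^2 - |z|^2 = 2\mathrm{Re}(\bar z D_0) + |D_0|^2$ gives $2|z||D_0|\sin(\phi - \phi_0 + \pi/q) + |D_0|^2$; the sine factor has sign $-\mathrm{sign}(\Delta)$ for $|\phi|>\pi/q$, with worst-case modulus $\sin(|\phi|-\pi/q)$. Requiring the radial term $2|z||D_0|\sin(|\phi|-\pi/q)$ to dominate $|D_0|^2$, together with the identity $\cot(\pi/q) - \cot|\phi| = \sin(|\phi|-\pi/q)/(\sin|\phi|\sin(\pi/q))$ and the comparison $\norme{T}\geq \sin(\alpha/2)|C_1-C_0|$, contributes the $\tfrac{1}{2\tan|\phi|}$ term.

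The main obstacle is precisely combining these three conditions into the stated additive form of $M_0$ via the trigonometric identities above and careful bookkeeping of the $\norme{T}^2/(|z|-k\norme{T})$ error terms supplied by Proposition \ref{propineg}. A separate point of care is the degenerate regime $|\phi|\leq \pi/q$, where the pointwise sign argument at the ideal level fails --- but here $M_0$ diverges as $|\phi|\to 0$, consistent with the fact that near-bijective maps can harbor arbitrarily large periodic islands (Corollary \ref{cor-attracteur-non-borne}), so no additional case work is required in the final statement.
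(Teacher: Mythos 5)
Your overall strategy (make $T^q$ an explicit piecewise translation and show a uniform radial drift) starts out like the paper's, but it breaks down at two points. First, the claim that a cumulative angular drift below $\pi/q$ forces each $\theta_k$ to lie on the same side of the discontinuities $\gamma,\gamma+\pi$ as its ideal image $\theta_0+k\alpha$ is false: the ideal orbit $\{\theta_0+2\pi j/q\}$ is equally spaced with gap $2\pi/q$, but for suitable $\theta_0$ one of its points lies arbitrarily close to a discontinuity, so an arbitrarily small drift flips the coding there. Consequently $T^qz-z$ is not always your ideal vector $v_k$; near the rays through the discontinuity line and their rotates, the coding differs from the ideal one in one or two positions and the translation vector becomes $w_k$ (of modulus $v\cos(\pi/q)$ and a different direction) or $v_{k-1}$. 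This is exactly what the paper's strips $\mathbf G_k$ and Lemma \ref{lemme-faux} are for, and the final estimate must take a minimum over these cases (cf.\ the quantity $K_x$ in Proposition \ref{prop:polyTq}). Your worst-case bound, computed only for the $v_k$ case, does not cover them.

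Second, and more seriously, your sign argument for $|T^qz|^2-|z|^2$ genuinely requires $|\phi|>\pi/q$: the radial component of $v_k$ over the cone $\mathbf C_k$ sweeps an interval of angles of width $2\pi/q$ centered near $\phi$, so for $0<|\phi|\le\pi/q$ it changes sign within a single cone and no choice of radius makes $|T^qz|$ monotone. But the theorem asserts a finite bound $M_0$ in that regime as well --- $M_0$ diverges only as $\phi\to 0$, and is finite for $0<|\phi|\le\pi/q$ --- so this case cannot be discarded as you propose. The paper circumvents it by replacing the circular level sets $|z|=x$ with the tilted polygons $\mathbf P_x$: the tilt angle $\delta_k(x)$ tends to $0$ as $x\to\infty$, so the translation vectors cross the polygon's sides outward (or inward) as soon as $\delta_k(x)<|\phi|$, which holds for \emph{every} $\phi\neq 0$ once $x>\bar x$ (Lemma \ref{prop:convpoly} and Proposition \ref{prop:polyTq}). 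Without a substitute for this construction, your argument proves the theorem only on part of the parameter range.
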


\begin{remark}\label{rem:conj-pi/2}
Up to a conjugacy by $z\mapsto e^{-i(\gamma+\pi/2)}$ we can always assume
that 
$\gamma = \pi/2$ and that the discontinuity line is $\mathbf D = i\mathbb R$. We will assume 
this 
in the proof and 
thus 
we will prove:
$$M(T)\leq  q\norme{T}  \left(\frac{1}{ 2\tan(|\beta-\alpha/2|)} + \frac{1}{2\tan(\pi/q) } +1 \right).$$
\end{remark}

We finish this paragraph with another remark.
\begin{remark} \label{rem:alpha_pos}
If we conjugue by $z\mapsto \bar z$ (see Remark \ref{conjugaison}), then we can assume $\alpha\in]0,\pi[$.
This 
allows us to 
assume 
$\sin(\alpha/2)>0$.
\end{remark}

\subsection{Vectors of translations}

First of all, we consider the following partition of the complement of $B(0,q\norme{T})$, see Figure \ref{fig:substrip}:

\begin{itemize}
\item
For $k=0,\ldots,q-1$, we define the cone $\mathbf C_k = \left\{z\mid\frac{\pi}{2}+2\pi \frac{k}{q}<\arg(z)<\frac{\pi}{2}+2\pi\frac{(k+1)}{q}\right\}$.
\item
For $k=0,\ldots,q-1$, let us consider $\mathbf E_k$ which is the translate of $\mathbf C_k$ such that its vertex 
is the point
\[ O_k = \frac{q\norme{T}}{\sin(\pi/q)}\exp\left( i \left( \frac{\pi}{q} + 2\pi\frac{k}{q} + \frac{\pi}{2} \right)\right).\]
\item
For $k=0,\ldots,q-1$, let us denote $\mathbf G_k$ the strip which is the intersection of the set of points of modulus bigger than $q\norme{T}$ and of the closed $q\norme{T}$-neighborhood of $\mathbf C_{k-1} \cap \mathbf C_{k}$ where $\mathbf  C_{-1}= \mathbf C_{q-1}$.
\end{itemize}

\begin{figure} 
\begin{center}
    \def \nombreq{5} 
    \pgfmathsetmacro{\nombreqb}{2*\nombreq-1} 
    \def \angle{180/\nombreq}
    	\pgfmathsetmacro{\sa}{sin(\angle)} 
    	\pgfmathsetmacro{\ca}{cos(\angle)} 
    	\pgfmathsetmacro{\ta}{tan(\angle)}
    	\pgfmathsetmacro{\smoi}{sin(\angle/2)} 
    	\pgfmathsetmacro{\cmoi}{cos(\angle/2)}
    	\pgfmathsetmacro{\tmoi}{tan(\angle/2)}		
    \def \longeur{3}
    \def \tr{2}
    \def \dep{1}
    \def \espace{0.5}
    \begin{tikzpicture}[scale=0.7]
    \foreach \x in {0,1,...,\nombreqb}{
    \begin{scope}[rotate=90+\x*\angle]
    	\draw[dashed] (0,0)--(\longeur*\ca/2+\longeur/2+\tr*\cmoi,\longeur*\sa/2+\tr*\smoi);
    	\draw (0,0)--(\longeur+\tr,0);
    	\draw[fill = blue!50,fill opacity= 0.5] 
    		(\longeur*\ca+\tr*\cmoi,\longeur*\sa+\tr*\smoi)--(\tr*\cmoi,\tr*\smoi)--(\longeur+\tr*\cmoi,\tr*\smoi);
    	\node at (\longeur*\cmoi+\tr*\cmoi-\espace*\cmoi,\longeur*\smoi+\tr*\smoi-\espace*\smoi) 
    		{\small $\mathbf E_{\x}$}; 
    \end{scope}
    }
    \foreach \x in {0,...,\nombreqb}{
    \begin{scope}[rotate=90+\x*\angle]
    	\clip (0,0) circle (\tr);	
    	\draw[fill = blue!20, fill opacity= 0.5] (\longeur*\ca,\longeur*\sa)--(0,0)--(\longeur,0);
    	\node at (\tr*\cmoi-\espace*\cmoi,\tr*\smoi-\espace*\smoi) {\small $O_{\x}$};
    \end{scope}
    }
    \foreach \x in {0,...,\nombreqb}{
    \begin{scope}[rotate=90+\x*\angle]
    	\draw[fill = green!50, fill opacity= 0.5]  
    		(\longeur+\tr*\cmoi,-\tr*\smoi) -- (\tr*\cmoi,-\tr*\smoi) arc (-\angle/2:\angle/2:\tr cm) -- (\longeur+\tr*\cmoi,\tr*\smoi);
    	\node at (\longeur+\tr*\cmoi-\espace,\tr*\smoi/2) {\small $\mathbf  G_{\x}$}; 
    \end{scope}
    }
    \end{tikzpicture}
\hfill
    \pgfmathsetmacro{\aa}{\tr*\smoi/3} 
    \pgfmathsetmacro{\bb}{2*\tr*\smoi/3} 
    	\pgfmathsetmacro{\aaangle}{asin(\aa/\tr)} 
    	\pgfmathsetmacro{\aac}{cos(\aaangle)} 
    	\pgfmathsetmacro{\aas}{sin(\aaangle)} 
    	\pgfmathsetmacro{\bbangle}{\angle-asin(\bb/\tr)} 
    	\pgfmathsetmacro{\bbc}{cos(\bbangle)} 
    	\pgfmathsetmacro{\bbs}{sin(\bbangle)} 
    	\pgfmathsetmacro{\bbbc}{cos(\bbangle-\angle)} 
    	\pgfmathsetmacro{\bbbs}{sin(\bbangle-\angle)} 
    \begin{tikzpicture}[scale=0.7]
    \foreach \x in {0,1,...,\nombreqb}{
    \begin{scope}[rotate=90+\x*\angle]
    	\draw[dashed] (0,0)--(\longeur*\ca/2+\longeur/2+\tr*\cmoi,\longeur*\sa/2+\tr*\smoi);
    	\draw (0,0)--(\longeur+\tr,0);
    	\draw[fill = red!50,fill opacity= 0.5] 
    		(\longeur+\tr*\aac,\tr*\aas)
    		-- (\tr*\aac,\tr*\aas) arc (\aaangle:\bbangle:\tr cm) 
    		-- (\longeur*\ca+\tr*\bbc,\longeur*\sa+\tr*\bbs);
    	\draw
    		(\longeur*\ca+\tr*\cmoi,\longeur*\sa+\tr*\smoi)--(\tr*\cmoi,\tr*\smoi)
    		--(\longeur+\tr*\cmoi,\tr*\smoi);
    	\node at (\longeur*\cmoi+\tr*\cmoi-\espace*\cmoi,\longeur*\smoi+\tr*\smoi-\espace*\smoi) 
    		{\small $\mathbf  E'_{\x}$}; \end{scope}
    }
    \foreach \x in {0,...,\nombreqb}{
    \begin{scope}[rotate=90+\x*\angle]
    	\draw[fill = white, fill opacity= 1] (0,0) circle (\tr);
    \end{scope}
    }
    \foreach \x in {0,...,\nombreqb}{\begin{scope}[rotate=90+\x*\angle]
    	\draw[fill = yellow!50, fill opacity= 0.5]  
    		(\longeur+\tr*\aac,\tr*\aas) -- (\tr*\aac,\tr*\aas) arc (\aaangle:\bbangle-\angle:\tr cm) 
    		-- (\longeur+\tr*\bbbc,+\tr*\bbbs);
    	\node at (\longeur+\tr*\ca-\espace,0) {\small $\mathbf  G'_{\x}$}; 
    	\end{scope}}
    \end{tikzpicture}
\end{center}
\caption{
	The different zones $\mathbf  E_k$, $\mathbf  G_k$,
	$\mathbf  E'_k$ and $\mathbf  G_k'$
	(cf Lemma \ref{lemme-faux}).
	}
\label{fig:substrip} 
\end{figure}

The map $T^q$ is clearly a piecewise translation. In the next lemma we compute the vectors of translation which define this map. We identify complex numbers with coordinates of points, in order to simplify.

\begin{lemma}\label{lemme-faux}
Let us denote:
$v=2\frac{\sin(\alpha/2)}{\sin(\pi/q)}|C_1-C_0|$ and $w=2\frac{\sin(\alpha/2)}{\tan(\pi/q)}|C_1-C_0|.$
For $k=0,\dots, q-1$, let us consider the vectors: 
\[
	v_k=ve^{i\left(\beta-\frac{\alpha}{2}+k\frac{2\pi}{q}+\frac{\pi}{q}\right)},
	\quad \text{and} \quad
	w_k=we^{i\left(\beta-\frac{\alpha}{2}+k\frac{2\pi}{q} \right)}.
\] 
Then the map $T^{q}$ is a piecewise translation with vectors $t_i$ with $i\in I$ for some finite set $I$. 
We have for each $k\in \{0,\dots,q-1\}$.

\begin{itemize}
\item The set  $\mathbf  E'_k$, where the vector of translation of $T^{q}$ is $v_k$, is a subcone of $\mathbf  C_k$ which contains $\mathbf  E_k$, see Figure \ref{fig:substrip}.
\item 
The restriction of $T^{q}$ to $\mathbf  G_{k}$ is a translation by some vector $t_k$ which 
is equal to
	$v_{k-1},w_k$ or $v_k$. 
	
More precisely, 
	\begin{itemize}
	\item consider the subset $\mathbf  G'_k$ of $\mathbf  G_k$ where $T^{q}$ is a translation by $w_k$. Then $\mathbf  G'_k$ is a substrip of $\mathbf   G_k$.
\item There exists a subset of $\mathbf  G_k$ inside $\mathbf  C_{k-1}$ where $t_k=v_{k-1}$.
\end{itemize}
\end{itemize}

\end{lemma}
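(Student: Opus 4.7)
The proof proceeds by explicit calculation of the translation vectors of $T^{q}$ on each cell; the cellular partition is dictated by the sequence of half-planes visited during the first $q$ iterates. Since $q\alpha=2\pi p$, equation \eqref{eq:itere:rot} applied to a word $u=(u_0,\dots,u_{q-1})\in\{0,1\}^{q}$ gives
\[
T^{q}(z)=z+(1-e^{i\alpha})\sum_{k=0}^{q-1}e^{-(k+1)i\alpha}\,C_{u_k},
\qquad\text{whenever }T^{\ell}z\in\mathbf{P}_{u_\ell}\text{ for every }\ell.
\]
Writing $C_{u_k}=\tfrac{C_0+C_1}{2}+\epsilon_k\tfrac{C_1-C_0}{2}$ with $\epsilon_k=2u_k-1\in\{\pm1\}$, the midpoint contributes nothing because $\sum_{k=0}^{q-1}e^{-ik\alpha}=0$, and what remains is
\[
\tau(u)=-i\sin(\alpha/2)\,e^{-i\alpha/2}(C_1-C_0)\sum_{k=0}^{q-1}\epsilon_k\,e^{-ik\alpha}.
\]
The task thereby reduces to identifying the pattern $(\epsilon_k)$ on each cell and evaluating this geometric sum.

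I next treat the cones. By Lemma~\ref{equ-rot}, for $0\le\ell\le q-1$ one has $|T^{\ell}z-e^{i\ell\alpha}z|\le q\,\norme{T}$, and an elementary trigonometric check shows that the distance from the vertex $O_k$ to each bounding ray of $\mathbf{C}_k$ equals $q\,\norme{T}$. Hence for $z\in\mathbf{E}_k$ every iterate $T^{\ell}z$ lies in the cone $\mathbf{C}_{(k+\ell p)\bmod q}$, and $u_{\ell}$ is determined by this cone index. Reindexing by $m=(k+\ell p)\bmod q$, the sum becomes $e^{ik\cdot 2\pi/q}\sum_{m=0}^{q-1}\epsilon'_m\,\omega^{-m}$ with $\omega=e^{i2\pi/q}$ and $\epsilon'_m=-1$ for $m<q/2$, $+1$ otherwise. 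Since $q$ is even, $\omega^{q/2}=-1$, and a short computation yields
\[
\sum_{m=0}^{q-1}\epsilon'_m\,\omega^{-m}=\frac{2i\,e^{i\pi/q}}{\sin(\pi/q)}.
\]
Substituting gives exactly $\tau(u)=v_k$. The subcone $\mathbf{E}'_k$ is the maximal subset of $\mathbf{C}_k$ on which this same cone orbit is realised; its boundary consists of preimages of cone boundaries under the affine isometries $T^{\ell}$, hence of line segments, so $\mathbf{E}'_k$ is a convex subcone containing $\mathbf{E}_k$.

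For the strip $\mathbf{G}_k$, points well inside $\mathbf{C}_{k-1}$ or $\mathbf{C}_k$ contribute $v_{k-1}$ and $v_k$ by the previous argument. The orbits starting from these two cones differ in cone index by $1$ at every step, so the visited half-planes differ only at the (at most two) critical steps $\ell$ for which the consecutive cones $m$ and $m+1$ straddle $\mathbf{D}$, namely $m=q/2-1$ and $m=q-1$. A short computation shows these two critical steps $\ell^{*},\ell^{**}$ satisfy $p(\ell^{**}-\ell^{*})\equiv q/2\pmod{q}$; as a consequence, flipping either one of them produces the same increment in $\sum_{\ell}\epsilon_{\ell}e^{-i\ell\alpha}$, so a single-flip pattern defines an unambiguous translation. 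Using the identity $e^{-i\pi/q}+e^{i\pi/q}=2\cos(\pi/q)$ together with $w=v\cos(\pi/q)$, one checks that this translation is exactly $w_k=\tfrac12(v_{k-1}+v_k)$. The substrip $\mathbf{G}'_k$ on which a single critical step is flipped is carved out of $\mathbf{G}_k$ by a linear condition of the form ``$T^{\ell^{*}}z$ lies on the wrong side of $\mathbf{D}$'' and is therefore itself a strip.

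The main obstacle is this last step: one has to verify that the sub-regions ``only $\ell^{*}$ flipped'' and ``only $\ell^{**}$ flipped'' glue into a single strip, and that the three patterns $(v_{k-1},w_k,v_k)$ exhaust the cellular decomposition of $\mathbf{G}_k$. Once this geometric bookkeeping is settled, the identifications of $v_k$ and $w_k$ follow by direct substitution into the formula for $\tau(u)$.
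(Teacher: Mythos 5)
Your proposal follows essentially the same route as the paper: track the cone index of the orbit via Lemma \ref{equ-rot} and the fact that $\mathbf E_k$ sits at distance $q\norme{T}$ from $\partial \mathbf C_k$, read off the coding, and evaluate the resulting geometric sum from Equation \eqref{eq:itere:rot}; then treat $\mathbf G_k$ by perturbing the coding at the one or two steps where consecutive cones straddle $\mathbf D$. Your algebraic packaging (centering $C_{u_k}$ at the midpoint so only $C_1-C_0$ survives, and the identity $w_k=\tfrac12(v_{k-1}+v_k)$) is a clean variant of the paper's direct splitting of the sum into the two half-ranges, and your evaluation of $\sum_m \epsilon'_m\omega^{-m}$ reproduces the paper's $v_k$ and $w_k$ correctly.

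There is one genuine gap, and it is exactly at the point you flag as "the main obstacle": the claim that $\mathbf G'_k$ is a sub\emph{strip}. Your argument is that $\mathbf G'_k$ is cut out of $\mathbf G_k$ by a linear condition of the form ``$T^{\ell^*}z$ lies on the wrong side of $\mathbf D$,'' and is ``therefore itself a strip.'' A linear condition only carves out a half-plane, and the intersection of $\mathbf G_k$ with a half-plane is a substrip of $\mathbf G_k$ only if the boundary line of that half-plane is parallel to the axis $\mathbf C_{k-1}\cap\mathbf C_k$ of the strip; otherwise you get a truncated or wedge-shaped region. The missing step is to check that the preimage line $T^{-\ell^*}(\mathbf D)$ (restricted to the relevant cell, where $T^{\ell^*}$ is the affine isometry $z\mapsto e^{i\ell^*\alpha}z+\mathrm{Cst}$) has direction $ie^{2i\pi k/q}$. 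This is precisely the computation the paper supplies: for $z'=z+te^{-i\pi/q}O_k$ one finds $T^{j_1}(z')-T^{j_1}(z)\in i\mathbb R$, using $k+j_1p\equiv q/2 \pmod q$, so $\re(T^{j_1}(z))=\re(T^{j_1}(z'))$ and the coding is invariant under translation along the strip axis. Once you add that verification (and the analogous one at $\ell^{**}$, which also settles the exhaustiveness of the three patterns on $\mathbf G_k$), your proof is complete and coincides with the paper's.
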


Recall that we have assumed 
that $\alpha$ satisfies 
$\sin(\alpha/2)>0$ (cf Remark \ref{rem:alpha_pos}). Thus we have  $v = |v_k|$ and $w = |w_k|$.

\begin{remark}
A statement similar to Lemma \ref{lemme-faux} above appears already 
in [Bosh.Goet.03]. 
Their statement, however, was not quite correct: 
they claimed that the vectors of translation 
for 
$T^q$ restricted to $\mathbf G_k$ all have the same norm. Moreover the description on the sets $\mathbf G'_k, \mathbf  C'_k$ was not in their paper.
\end{remark}

\begin{proof}[Proof of Lemma \ref{lemme-faux}]
Let us fix $k=0,\dots, q-1$.
We consider 
several 
different cases. 
\begin{itemize}
\item 
Let $z\in \mathbf   E_k$. 
We have $e^{i\ell \alpha}O_k=O_{k+p \ell }$ by definition of $O_k$ for all $\ell\in \mathbb N$. 
This gives us $e^{i\ell \alpha}z\in \mathbf   E_{k+p\ell }$. 
Then by Lemma \ref{equ-rot} we obtain for $\ell  \leq q-1$,  $d(T^\ell (z), \mathbf   E_{k+p\ell })<q\norme{T}$. 
We deduce 
$$
	T^\ell (z)\in \mathbf  C_{k+p\ell }\quad  0\leq \ell \leq q-1.
$$
Thus the first $q-1$ iterates of $z$ under 
the action of 
$T$ meet all the cones $\mathbf  C_i$ and in particular no two subsequent iterates lie in the same cone.

Thus the restriction of $T^{q}$ to $\mathbf   E_k$ is an honest translation and not a piecewise translation. 
Now we will compute the vector of translation of $T^{q}$. 
We use the notation
$$
	\eta(i)=\begin{cases} 0\quad i\leq q/2,\\ 1\quad i>q/2.\end{cases}
$$
We compute the vector $T^{q}(z)$ for $z\in \mathbf   E_k$ by the following formula, see Equation \eqref{eq:itere:rot}
\[
	T^{q}(z)=z+(1-e^{i\alpha})\sum_{n=0}^{q-1}C_{\eta(k+np)} e^{i\alpha(q-n-1)}
	=z+(1-e^{i\alpha})\sum_{n=0}^{q-1}C_{\eta(k+np)} e^{i\alpha(-n-1)}.
\]
Modulo 
$q$ the map $n\mapsto k+np$ is a bijection since $\gcd(p,q)=1$
\[
T^{q}(z)
	=z+(1-e^{i\alpha})\sum_{m=0}^{q-1}C_{\eta(m)}e^{i\alpha(-1-\frac{m-k}{p})}
	=z+(1-e^{i\alpha})\sum_{m=0}^{q-1}C_{\eta(m)}e^{2i\pi\frac{-p-m+k}{q}}.
\]
Thus we obtain :
\begin{equation}\label{equbase}
T^{q}(z)=z+(1-e^{i\alpha})C_0\sum_{m=0}^{q/2-1}e^{2i\pi\frac{-p-m+k}{q}}
	+(1-e^{i\alpha})C_1\sum_{m=q/2}^{q-1}e^{2i\pi\frac{-p-m+k}{q}}
\end{equation}
 
By means of well known trigonometric formula 
we obtain
\begin{align*}
T^{q}(z)
	&=z+(1-e^{i\alpha})e^{2i\pi\frac{-p+k}{q}} 
		\left(C_0\sum_{m=0}^{q/2-1}e^{-2i\pi\frac{m}{q}}
			+C_1\sum_{m=q/2}^{q-1}e^{-2i\pi\frac{m}{q}}\right)\\
	&=z+(1-e^{i\alpha})e^{2i\pi\frac{k-p}{q}}
		(C_0-C_1)\frac{2}{1-e^{-2i\pi/q}}\\
	&=z+2\frac{\sin(\alpha/2)}{\sin(\pi/q)}|C_1-C_0|
		\exp\left( i \left( \beta+2\pi\frac{k-p}{q}+\frac{\alpha}{2}+\frac{\pi}{q} \right) \right)\\
	&=z+2\frac{\sin(\alpha/2)}{\sin(\pi/q)} |C_1-C_0|
		e^{i\left(\beta-\frac{\alpha}{2}+\frac{\pi}{q}+2\pi\frac{k}{q}\right)}\\
	&=z+v_k.
\end{align*}

\item 
Now consider $z\in \mathbf  G_k$. 
We will denote 
by 
$R$ the rotation 
with 
center $0$ and angle $\alpha$. 
If there exists some $n\geq 0$ such that 
$e^{in\alpha} z$ and $T^n(z)$ 
are in the two different half planes 
defined by $\re(z)\geq 0$ and $ \re(z)\leq 0$, then this implies that the two points 
are both in $\mathbf  G_0$ or $\mathbf  G_s$. 
Indeed by Equation \eqref{equ-rot} they are at distance at most $q\norme{T}$ of each other.  
Thus there are several choices for the different codings
: either the codings of the orbits of $T^nz, R^nz$ for $z\in \mathbf  G_k$ differ 
in 
one position 
$j_1$, or else  
they differ at two positions $j_1, j_2$. 
Note that $j_1, j_2$ are defined by 
$k+j_1p=q/2 \mod q$ or $k+j_2p=0 \mod q$, according  
to the points in 
$\mathbf  G_0$ or $\mathbf  G_s$. 
In other words: 
$R^{j_1}(z)$ and  $T^{j_1}(z)$ are 
contained in different half spaces.

Now we want to do the same computation as in the case $z\in \mathbf  E_k$. We can resume the situation 
as indicated in the lines below: 
the first one concerns the coding of a point in $\mathbf  E_k$, 
and the next three lines correspond to one change in the coding if the change 
is at position $j_1$ or $j_2$, or else to two changes in the coding.  

$$\begin{cases}
u_0\dots u_{j_1}\dots u_{j_2}\dots\\
u_0\dots u_{j_1}\dots (1-u_{j_2})\dots\\
u_0\dots (1-u_{j_1})\dots u_{j_2}\dots\\
u_0\dots (1-u_{j_1})\dots (1-u_{j_2})\dots\\
\end{cases}$$

We consider the two possibilities, starting from 
Equation \eqref{equbase}.
\begin{enumerate}
\item 
Assume that there is only one change. 
For example if one $0$ becomes a $1$, we need to change in Equation \eqref{equbase} 
the $C_0\sum_0^{q/2-1}$ into $C_0\sum_{1}^{q/2-1}$ 
and $C_1\sum_{q/2}^{q-1}$ into $C_1\sum_{q/2}^{q}$. We obtain the following:

\begin{align*}
T^{q}(z)
	&=z+(1-e^{i\alpha})e^{2i\pi\frac{k-p}{q}}
		\left(C_0\sum_{m=1}^{q/2-1}e^{2i\pi \frac{- m}{q}}+C_1\sum_{m=q/2}^{q}e^{2i\pi \frac{- m}{q}}
		\right)\\
	&=z+(1-e^{i\alpha})e^{2i\pi \frac{k-p}{q}}
		\left(C_0\sum_{m=0}^{q/2-1}e^{2i\pi \frac{- m}{q}}-C_0
		+C_1\sum_{m=s}^{2s-1}e^{2i\pi \frac{- m}{q}}+C_1\right)\\
	&=z+(1-e^{i\alpha})e^{2i\pi\frac{k-p}{q}}
		\left((C_0-C_1) \sum_{m=0}^{q/2-1}e^{2 \frac{- m}{q}} -(C_0-C_1)\right)\\
	&=z+(1-e^{i\alpha})e^{2i\pi\frac{k-p}{q}}\left((C_0-C_1)\frac{2}{1-e^{-2i\pi/q}}-(C_0-C_1)\right).\\
	&=z+(1-e^{i\alpha})e^{2i\pi\frac{k-p}{q}}(C_0-C_1)
		\left( \frac{1+e^{-2i\pi/q}}{1-e^{-2i\pi/q}} \right).\\
	&=z+(1-e^{i\alpha})e^{2i\pi \frac{k-p}{q}}(C_0-C_1)\frac{2\cos(\pi/q)e^{-i\pi/q}}{1-e^{-2i\pi/q}}.\\
	&=z+\cos\left({\pi}/{q}\right) e^{-i\frac{\pi}{q}}v_k\\
	&=z+w_k.
\end{align*}

\item The case where one $1$ becomes a $0$ is similar:
 we replace $C_0\sum_0^{q/2-1}$ by $C_0\sum_{1}^{q/2}$ 
and $C_1\sum_{q/2}^{q-1}$ by $C_1\sum_{q/2+1}^{q}$. We obtain the following:

\begin{align*}
T^{q}(z)
	&=z+v_k +(1-e^{i\alpha})C_0e^{2i\pi\frac{-p-q/2+k}{q}} -(1-e^{i\alpha})C_1e^{2i\pi\frac{-p-q/2+k}{q}}\\
	&=z+v_k -(1-e^{i\alpha})C_0e^{2i\pi\frac{-p+k}{q}} -(1-e^{i\alpha})C_1e^{2i\pi\frac{-p+k}{q}}\\
	&=z+\cos(\pi/q)e^{-i\frac{\pi}{q}}v_k\\
	&=z+w_k.
\end{align*}

\item In the last case (where we exchange one $0$ and one $1$), we need to change the $C_0\sum_0^{q/2-1}$ into $C_0\sum_{1}^{q/2}$ and $C_1\sum_{q/2}^{q-1}$ into $C_1\sum_{q/2+1}^{q}$.
\begin{align*}
T^{q}(z)
	&=z+(1-e^{i\alpha})e^{2i\pi\frac{k-p}{q}}
		\left(C_0\sum_{m=1}^{q/2}e^{2i\pi\frac{- m}{q}}+C_1\sum_{m=q/2+1}^{q}e^{2i\pi\frac{- m}{q}}\right)\\
	&=z+(1-e^{i\alpha})e^{2i\pi\frac{k-p}{q}}
		\left(C_0\sum_{m=0}^{q/2-1}e^{2i\pi \frac{- m}{q}}-2C_0+C_1\sum_{m=q/2}^{q-1}e^{2i\pi \frac{- m}{q}}+2C_1\right)\\
	&=z+(1-e^{i\alpha})e^{2i\pi\frac{k-p}{q}}
		\left(C_0\sum_{m=0}^{q/2-1}e^{2i\pi \frac{- m}{q}}
		+C_1\sum_{m=q/2}^{q-1}e^{2i\pi\frac{- m}{q}}+2(C_1-C_0)\right).\\
\end{align*}
The same computation shows that
\begin{align*}
T^{q}(z)
	&=z+(1-e^{i\alpha})e^{2i\pi\frac{k-p}{q}}(C_0-C_1)\left(\frac{2}{1-e^{-2i\pi/q}}-2\right)\\
	&=z+(1-e^{i\alpha})e^{2i\pi\frac{k-p}{q}}(C_0-C_1)\frac{2e^{-2i\pi/q}}{1-e^{-2i\pi/q}}\\
	&=z+v_ke^{-2i\pi/q}=z+v_{k-1}
\end{align*}
\end{enumerate}
\item
Let $z\in \mathbf  G_k$, consider $t>0$.
We claim that the codings of $z$ and $z'=z+te^{-i\pi/q}O_k$
have the same prefix of size $q+1$ ({\it i.e} $T^l(z)$ and $T^l(tz)$ are in the same half planes for
$l\in \{0,\ldots,q\}$). 
This proves that $\mathbf  G'_k$ is a substrip of $\mathbf {G}_k$, see Figure \ref{fig:substrip}, since for each point $z\in \mathbf  G'_k$ a half-line starting from $z$ will be inside $\mathbf  G'_k$.

\begin{flalign*}
T^{j_1}(z') - T^{j_1}(z) 
	& =  e^{2i\pi \frac{p}{q}j_1} z' + \text{Cst}
		- e^{2i\pi \frac{p}{q}j_1} z - \text{Cst}
	 =  e^{2i\pi \frac{p}{q}j_1} (z'-z)
	 =  e^{2i\pi \frac{p}{q}j_1} te^{-i\frac{\pi}{q}}O_j \\
	& =  e^{2i\pi \frac{p}{q}j_1} te^{-i\frac{\pi}{q}}
		\frac{q\norme{T} }{\sin(\pi/q)}e^{ i  \frac{\pi}{q} + 2i\pi\frac{j}{q} + i\frac{\pi}{2}} 
	 = \frac{q\norme{T} t }{\sin(\pi/q)}  i  e^{2i\pi \frac{p}{q}j_1}  e^{ 2i\pi\frac{j}{q}}\\
	&= \frac{q\norme{T} t }{\sin(\pi/q)}  i  e^{2i\pi \frac{pj_1+j}{q}}
	 = \frac{q\norme{T} t }{\sin(\pi/q)}  i  e^{2i\pi \frac{1}{2}}
	 = \frac{q\norme{T} t (-1)^k}{\sin(\pi/q)}  i  \in i\mathbb R\\
	&\implies \re(T^{j_1}(z))= \re(T^{j_1}(z')).
\end{flalign*}

Since by now all cases are treated, the claim is proved.
\end{itemize}
\end{proof}

\begin{notations}
Let us denote $a_k$ (or $b_k$ respectively) 
the width of the half strip $\mathbf   G_k'\cap \mathbf  C_k$ (or $\mathbf   G_k'\cap \mathbf  C_{k-1}$ respectively), 
see Figure \ref{fig:vecttran} and Figure \ref{fig:preuveconvpoly}. Note that we have $0\leq a_k,b_k\leq q\norme{T}$.
\end{notations}

\begin{figure} 
\begin{center}
    \def \nombreq{5}  \pgfmathsetmacro{\nombreqb}{2*\nombreq-1} 
    \def \angle{180/\nombreq}
    	\pgfmathsetmacro{\sa}{sin(\angle)} 
    	\pgfmathsetmacro{\ca}{cos(\angle)} 
    	\pgfmathsetmacro{\ta}{tan(\angle)}
    	\pgfmathsetmacro{\smoi}{sin(\angle/2)} 
    	\pgfmathsetmacro{\cmoi}{cos(\angle/2)}
    	\pgfmathsetmacro{\tmoi}{tan(\angle/2)}	
    \def \longeur{3}
    \def \tr{2}
    \def \dep{1}
    \def \espace{0.5}
    \pgfmathsetmacro{\aa}{\tr*\smoi/3} 
    \pgfmathsetmacro{\bb}{2*\tr*\smoi/3} 
    	\pgfmathsetmacro{\aaangle}{asin(\aa/\tr)} 
    	\pgfmathsetmacro{\aac}{cos(\aaangle)} 
    	\pgfmathsetmacro{\aas}{sin(\aaangle)} 
    	\pgfmathsetmacro{\bbangle}{\angle-asin(\bb/\tr)} 
    	\pgfmathsetmacro{\bbc}{cos(\bbangle)} 
    	\pgfmathsetmacro{\bbs}{sin(\bbangle)} 
    	\pgfmathsetmacro{\bbbc}{cos(\bbangle-\angle)} 
    	\pgfmathsetmacro{\bbbs}{sin(\bbangle-\angle)} 
    \def \llongueur{4}
    	\pgfmathsetmacro{\vecteurs}{sin(\angle/2+5)} 
    	\pgfmathsetmacro{\vecteurc}{cos(\angle/2+5)} 
    	\def \vecteurl{0.8}
    	\pgfmathsetmacro{\vecteurts}{sin(\angle/2+90)} 
    	\pgfmathsetmacro{\vecteurtc}{cos(\angle/2+90)} 
    \begin{tikzpicture}[scale=1]
    \draw[dashed] (0,0)--(\longeur*\ca/2+\longeur/2+\tr*\cmoi,\longeur*\sa/2+\tr*\smoi);
    \draw (0,0)--(\longeur+\tr,0);
    \draw (0,0)--(\longeur*\ca+\tr*\ca,\longeur*\sa+\tr*\sa);
    \draw[fill = red!50,fill opacity= 0.5] 
    		(\longeur+\tr*\aac,\tr*\aas)
    		-- (\tr*\aac,\tr*\aas) arc (\aaangle:\bbangle:\tr cm) 
    		-- (\longeur*\ca+\tr*\bbc,\longeur*\sa+\tr*\bbs);
    \draw[thick,->] (\vecteurc*\llongueur,\vecteurs*\llongueur)--(\vecteurc*\llongueur+\vecteurl,\vecteurs*\llongueur-\vecteurl)
    	node[right] {$v_k$};
    \begin{scope}[shift={(\vecteurc*\llongueur,\vecteurs*\llongueur)}]
    	\begin{scope}[rotate=-90+\angle/2]
    	\draw[ultra thick,green,->] (1.41*\vecteurl,0) arc (0:45-\angle/2:1.41*\vecteurl) ;
    	\end{scope}
    \end{scope}
    \draw[thick,->] (0,0) -- (\cmoi,\smoi) ;
    	\node[below] at (\cmoi,\smoi) {$e^{i \left( \frac{\pi}{q} + 2\pi\frac{k}{q} + \frac{\pi}{2} \right)}$};
    \draw[dashed] (\vecteurc*\llongueur-2*\vecteurtc,\vecteurs*\llongueur-2*\vecteurts)
    			--(\vecteurc*\llongueur+2*\vecteurtc,\vecteurs*\llongueur+2*\vecteurts);
\draw[decorate,decoration={brace,raise=0.1cm}]
	(3,-0.5) -- (0,-0.5)
	node[below=0.2cm,pos=0.5] {$x$};
\begin{scope}[rotate= \angle]
\draw[decorate,decoration={brace,raise=0.1cm}]
	(0,0) -- (3,0) 
	node[above=0.2cm,pos=0.5] {$x$};
\end{scope}
\draw[decorate,decoration={brace,raise=0.1cm}]
	(3+\tr*\aac,\tr*\aas) -- (3+\tr*\aac,0) 
	node[right=0.2cm,pos=0.5] {$a_k$};
\begin{scope}[rotate=\angle]
	\draw[decorate,decoration={brace,raise=0.1cm}]
		(3+\tr*\aac,0) -- (3+\tr*\aac,\tr*\bbbs) 
		node[right=0.2cm,pos=0.5] {$b_{k+1}$};
\end{scope}

\draw[ultra thick,blue] 
	(3.5,0)--(3.5,\tr*\aas)--(3.5*\ca-\tr*\bbbs*\sa, 3.5*\sa  +\tr*\bbbs*\ca)--(3.5*\ca,3.5*\sa);
\node[thick,blue] at (3.5,\tr*\aas) {$\bullet$}; 
	\node[thick,blue,left ] at (3.5,\tr*\aas) {$A_k$}; 
		\node[thick,blue,below] at (3.5,0) {$A'_k$};
\node[thick,blue] at (3.5*\ca-\tr*\bbbs*\sa, 3.5*\sa  +\tr*\bbbs*\ca){$\bullet$}; 
	\node[thick,blue, left ] at  (3.5*\ca-\tr*\bbbs*\sa, 3.5*\sa  +\tr*\bbbs*\ca) {$B_k$}; 
	\node[thick,blue, above left ] at  (3.5*\ca,3.5*\sa) {$B'_k$};
\draw[thick,blue] 
	(3.5+0.1,0)--(3.5+0.1,0.1)--(3.5,0.1);
\begin{scope}[rotate=\angle] 
	\draw[thick,blue]  (3.5+0.1,0)--(3.5+0.1,-0.1)--(3.5,-0.1);
\end{scope}
    \end{tikzpicture}
\hfill
    \def \llongueur{2}
    \def \vecteurl{0.6}
    \begin{tikzpicture}[scale=2]
    \draw[fill = yellow!50, fill opacity= 0.5]  
    		(\longeur,\tr*\aas)
    		-- (\tr*\aac-1,\tr*\aas) arc (\aaangle:\bbangle-\angle:\tr cm) 
    		-- (\longeur,\tr*\bbbs);
    \draw[dashed] (\tr-0.2,0)--(\longeur,0);
    \draw[dashed] (\llongueur,\tr*\aas/2-\vecteurl*1.41)--(\llongueur,\tr*\aas/2);
    \draw[thick,->] (\llongueur,\tr*\aas/2)--(\llongueur+\vecteurl,\tr*\aas/2-\vecteurl) node[right] {$w_k$};
    \begin{scope}[rotate=-90,shift={(-\tr*\aas/2,\llongueur)}]
    \draw[ultra thick,green,->] (1.41*\vecteurl,0) arc (0:45:1.41*\vecteurl);
    \end{scope}
    \draw[thick,->] (0,0) -- (0.5,0) ;
    \draw[ultra thick, blue] (\llongueur,\tr*\aas) -- (\llongueur,\tr*\bbbs) ;
        	\node[above left] at (\llongueur,\tr*\aas) {\textcolor{blue}{$A_k$}}; \node at (\llongueur,\tr*\aas) {\textcolor{blue}{$\bullet$}};
        	\node[left] at (\llongueur,0) {\textcolor{blue}{$A'_k=B'_{k-1}$}};  	\node at (\llongueur,0) {\textcolor{blue}{$\bullet$}};
        	\node[below left] at (\llongueur,\tr*\bbbs) {\textcolor{blue}{$B_{k-1}$}}; \node at (\llongueur,\tr*\bbbs) {\textcolor{blue}{$\bullet$}};
        	\node[below] at (0.5,0) {$e^{i \left( 2\pi\frac{k}{q} + \frac{\pi}{2} \right)}$};
    	\node[] at (0,0) {$\bullet$}; \node[below] at (0,0) {$0$};
    \draw[decorate,decoration={brace,raise=0.01cm}]
    	(\longeur,\tr*\aas) -- (\longeur,0) 
    	node[right=0.1cm,pos=0.5] {$a_k$};
    \draw[decorate,decoration={brace,raise=0.01cm}]
    	(\longeur,0) -- (\longeur,\tr*\bbbs) 
    	node[right=0.1cm,pos=0.5] {$b_k$};
\draw[decorate,decoration={brace,raise=0.1cm}]
	(\llongueur,-0.8) -- (0,-0.8)
	node[below=0.2cm,pos=0.5] {$x$};
    \end{tikzpicture}
\end{center}
\caption{Translation vectors $v_k, w_k$ in $\mathbf  E'_k$ (red part) and $\mathbf  G_k'$ (yellow)
(cf Lemma \ref{lemme-faux}).
}
\label{fig:vecttran}
\end{figure}
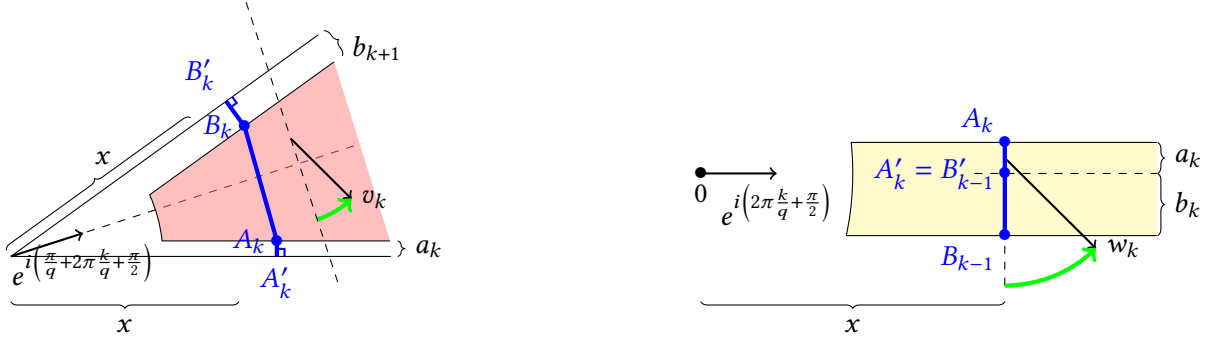

\subsection{Construction of polygons}

\subsubsection{
The case $\Delta<0$}

In this part we assume $\Delta<0$. By Definition of $\Delta$ and Remark \ref{rem:conj-pi/2}, we deduce that $\beta-\alpha/2$ belongs to $(0,\pi/2)$. The goal is to construct for each positive number $x$, big enough, 
some polygon $\mathbf P_x$. 
These polygons 
will have the following properties: 
\begin{itemize}
\item the disjoint union of all $\mathbf P_x$ cover the plane outside the ball $B(0,q\norme{T})$, 
\item the map $x\mapsto P_x$ is 
increasing, with respect to the partial order on subsets of the plane given by the inclusion, 
\item if $z$ 
lies outside of 
outside $\mathbf P_x$, then the piecewise translation $T^q$ maps $z$ to a point outside $\mathbf P_{x'}$ for some $x'>x$, 
see Proposition \ref{prop:polyTq}.
\end{itemize} This will be the key point to finish the proof of Theorem \ref{th:casrat}.

\begin{definition}
Consider $x>q\norme{T}$, then we define a polygon $\mathbf P_x$ with vertices $A_0,B_0,...,A_{q-1},B_{q-1}$ 
where (see Figure \ref{fig:vecttran}): 
\begin{itemize}
\item 
the point $A_k$ is defined in $\mathbf  E'_{k} \cap \mathbf  G'_{k}$ as the point 
such that the orthogonal projection $A'_k$ of $A_k$ on $[0,ie^{2i\pi k/q})$ has modulus $x$. 
\item
Similarly we define $B_k\in \mathbf  E'_{k} \cap \mathbf  G'_{k+1}$ such that the orthogonal projection $B'_k$ of $B_k$ on $[0,i e^{2i\pi (k+1)/q})$  has modulus $x$. 
\end{itemize}
Moreover we define
$\delta_k(x)$ as the angle (taken in $[0,\pi/2]$) between the lines $(A_kB_k)$ and $(A'_kB'_k)$.

\end{definition}

\begin{lemma}\label{le:modulepolygone}
For all $x>q\norme{T}$ and for all $z$ on the boundary of $\mathbf P_x$  we obtain $$x\cos(\pi/q) \leq |z|\leq \displaystyle\sqrt{x^2+q^2\norme{T}^2}.$$ 
\end{lemma}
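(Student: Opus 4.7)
My plan is to exploit the rotational symmetry of the construction and work out explicit coordinates for the vertices of $\mathbf{P}_x$. I would reduce by symmetry to a single cone $\mathbf{C}_k$, placing its bisector along the positive real axis so that the two boundary rays sit at angles $\pm\pi/q$. In these local coordinates $A'_k = x\,e^{-i\pi/q}$, and $A_k$ is obtained from $A'_k$ by adding $a_k$ times the inward unit normal to the lower boundary; an analogous description applies to $B_k$, shifted from $B'_k$ by $b_{k+1}$ along the inward normal to the upper boundary. Pythagoras applied to the right triangle $\{0, A'_k, A_k\}$ then yields $|A_k|^2 = x^2 + a_k^2$, and similarly $|B_k|^2 = x^2 + b_{k+1}^2$.

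For the upper bound, I would note that $a_k, b_k \leq q\,\norme{T}$, since the half-strips $\mathbf{G}'_k \cap \mathbf{C}_k$ and $\mathbf{G}'_k \cap \mathbf{C}_{k-1}$ are contained in $\mathbf{G}_k$, which by definition is the $q\,\norme{T}$-neighborhood of the separating ray. Because $t \mapsto |z(t)|^2$ is a convex quadratic on every line segment, the maximum of $|z|$ over a straight edge of $\mathbf{P}_x$ is attained at one of its endpoints. Combining this with the vertex computation gives $|z| \leq \sqrt{x^2 + q^2 \norme{T}^2}$ on the whole boundary.

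For the lower bound, the two kinds of edges should be treated separately. On an edge $[A_k, B_k]$ inside $\mathbf{C}_k$, a direct computation shows that both endpoints have coordinate along the cone's bisector equal to $x\cos(\pi/q)$ plus a nonnegative multiple of $\sin(\pi/q)$; since this coordinate is affine along the segment it stays $\geq x\cos(\pi/q)$ everywhere, and $|z|$ dominates its absolute value. On a crossing edge $[B_k, A_{k+1}]$ through $\mathbf{G}'_{k+1}$, the vector $A_{k+1} - B_k$ is purely perpendicular to the separating ray, and the segment meets that ray at the single point $B'_k = A'_{k+1}$, which lies at distance $x$ from the origin. Hence the orthogonal projection of $0$ onto this segment is $B'_k$ itself, yielding the stronger estimate $|z| \geq x \geq x\cos(\pi/q)$.

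The main item to verify carefully --- a matter of bookkeeping rather than a real obstacle --- is that $\mathbf{P}_x$ really is the polygon with cyclic vertex sequence $A_0, B_0, A_1, B_1, \ldots$ (no additional vertices) and that $A_k, B_k$ lie in the strips claimed, so that the bounds $a_k, b_{k+1} \leq q\,\norme{T}$ apply; both points can be read off directly from the description of $\mathbf{E}'_k$ and $\mathbf{G}'_k$ in Lemma~\ref{lemme-faux}.
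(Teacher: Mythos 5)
Your proof is correct. The paper itself gives no argument here --- it explicitly leaves the proof to the reader with a pointer to Figure \ref{fig:vecttran} --- and your reconstruction (vertices at $\sqrt{x^2+a_k^2}$ and $\sqrt{x^2+b_{k+1}^2}$ via Pythagoras, convexity of $|z|^2$ along edges for the upper bound, and the affine ``bisector coordinate'' plus the perpendicularity of the crossing edges for the lower bound) is exactly the computation that figure is meant to suggest, so it fills the omission faithfully.
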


The proof of this lemma is left to 
the 
reader with the help of Figure \ref{fig:vecttran}. 

\begin{lemma} \label{prop:convpoly}
Let us consider
$ \bar x = q\norme{T} \left(\frac{1}{ 2\tan(\beta-\alpha/2)} + \frac{1}{2\tan(\pi/q) } \right)$.
Then 
one has:
\begin{itemize}
\item the map $x\mapsto \delta_k(x)$ is decreasing
\item $\lim_{x\to+\infty} \delta_k(x)=0$ 
\item For $x> \bar x$, we have  $0\leq \delta_k(x) < \beta-\alpha/2$.  
\end{itemize}

\end{lemma}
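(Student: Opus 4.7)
The plan is to compute $\tan(\delta_k(x))$ in closed form and then read all three claims directly off the formula. I set up local coordinates for the cone $\mathbf C_k$: rotate so that its angle bisector is the positive real axis, in which case the two rays bounding $\mathbf C_k$ are at angles $\pm\pi/q$. Then $A'_k=(x\cos(\pi/q),-x\sin(\pi/q))$ and $B'_k=(x\cos(\pi/q),x\sin(\pi/q))$, so the chord $(A'_kB'_k)$ is \emph{vertical}. Pushing $A_k$ off its ray into the cone by the perpendicular distance $a_k$, and $B_k$ off its ray by $b_{k+1}$, gives $A_k = A'_k + a_k(\sin(\pi/q),\cos(\pi/q))$ and $B_k = B'_k + b_{k+1}(\sin(\pi/q),-\cos(\pi/q))$.

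A direct subtraction yields
\[
B_k-A_k=\bigl((b_{k+1}-a_k)\sin(\pi/q),\; 2x\sin(\pi/q)-(a_k+b_{k+1})\cos(\pi/q)\bigr),
\]
so that, as long as the vertical component is positive,
\[
\tan(\delta_k(x))=\frac{|b_{k+1}-a_k|\sin(\pi/q)}{2x\sin(\pi/q)-(a_k+b_{k+1})\cos(\pi/q)}.
\]
The first two bullets of the lemma are now immediate: the numerator is independent of $x$ while the denominator is affine and strictly increasing in $x$, so $\delta_k(x)$ is decreasing and tends to $0$ as $x\to+\infty$.

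For the third bullet I optimize the right-hand side under the constraint $a_k,b_{k+1}\in[0,q\norme{T}]$. Setting $u=a_k+b_{k+1}$ and $v=|b_{k+1}-a_k|$, the admissible region is $\{v\le u,\; u+v\le 2q\norme{T}\}$. For fixed $v$ the ratio increases with $u$, so the maximum is achieved on the slanted boundary $u=2q\norme{T}-v$; substituting back and differentiating in $v$ shows (for $x>q\norme{T}/\tan(\pi/q)$, which will be implied by $x>\bar x$) that the maximum is attained at the corner $\{a_k,b_{k+1}\}=\{0,q\norme{T}\}$, giving
\[
\tan(\delta_k(x))\le\frac{q\norme{T}\sin(\pi/q)}{2x\sin(\pi/q)-q\norme{T}\cos(\pi/q)}.
\]
Imposing that this bound be strictly smaller than $\tan(\beta-\alpha/2)$, clearing denominators and dividing by $2\sin(\pi/q)\tan(\beta-\alpha/2)$, yields
\[
x > \frac{q\norme{T}}{2\tan(\beta-\alpha/2)} + \frac{q\norme{T}}{2\tan(\pi/q)} = \bar x,
\]
and since $\tan$ is strictly increasing on $[0,\pi/2]$ this translates into $\delta_k(x)<\beta-\alpha/2$.

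The only mildly subtle step is the joint optimization over $(a_k,b_{k+1})$: enlarging $a_k+b_{k+1}$ shrinks the denominator (helpful) but, once $a_k+b_{k+1}>q\norme{T}$, also constrains $|b_{k+1}-a_k|$ from above. Splitting into the two regimes $a_k+b_{k+1}\le q\norme{T}$ and $a_k+b_{k+1}\ge q\norme{T}$ (or working on the slanted boundary) makes it clear that the extremal configuration is the asymmetric one, which is exactly what produces the stated value of $\bar x$.
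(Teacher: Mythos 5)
Your proposal is correct and follows essentially the same route as the paper: write $\tan(\delta_k(x))$ in closed form, read off monotonicity and the limit, then maximize over the admissible widths to reduce to the corner configuration $\{a_k,b_{k+1}\}=\{0,q\norme{T}\}$ and solve the resulting inequality for $\bar x$. The only real difference is bookkeeping: you work in rotated Cartesian coordinates, while the paper argues synthetically on the trapezoid $A'_kA_kB_kB'_k$ via the auxiliary points $C_k,L_k$. It is worth noting that your denominator $2x\sin(\pi/q)-(a_k+b_{k+1})\cos(\pi/q)$ is actually the correct one: the paper's step $|B_kC_k|=\frac{x-s_k}{x}|B'_kA'_k|=2(x-s_k)\sin(\pi/q)$ is a slip (the similar triangles must be taken from the vertex of the inner cone, which gives $2\big(x\sin(\pi/q)-s_k\cos(\pi/q)\big)$); the two expressions coincide at $s_k=0$, which is all that the final bound uses, so nothing downstream is affected. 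You are also more careful than the paper about \emph{why} the extremum sits at the asymmetric corner, but your parenthetical claim that $x>\bar x$ forces $x>q\norme{T}/\tan(\pi/q)$ is not valid in general: it is equivalent to $\beta-\alpha/2\le\pi/q$, which nothing in the hypotheses guarantees. When $\beta-\alpha/2>\pi/q$ and $x$ lies between $\bar x$ and $q\norme{T}\cot(\pi/q)$, the denominator can vanish for admissible pairs with $a_k+b_{k+1}$ near $2q\norme{T}$, and the optimization on the slanted boundary is no longer justified. This is a genuine loose end, but it is inherited from the paper itself, which simply asserts that the angle is maximal at $r_k=q\norme{T}$, $s_k=0$ without addressing positivity of the denominator over the whole admissible region.
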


\begin{proof}
We refer to Figure \ref{fig:preuveconvpoly} . 
There is a polygon with vertices $A'_k,A_k,B_k, B'_k$ and we define points $C_k, L_k$ such that:
\begin{itemize}
\item
If $a_k\geq b_{k+1}$, then $C_k$ belongs to $[A'_k,A_k]$ and $|A'_kC_k| = b_{k+1}$. The point $L_k$ is the orthogonal projection of $A_k$ on $[B_k,C_k]$.
\item
If $b_{k+1}>a_k$, $C_k$ belongs to $[B'k,Bk]$ and $|B'_kC_k| = a_k$. The point
$L_k$ is defined as the orthogonal projection of $A_k$ on $[A_k,C_k]$.
\end{itemize}
Let us denote $r_k = \max(a_k,b_{k+1}) - \min(a_k,b_{k+1})$ and $s_k = \min(a_k,b_{k+1})$.
Note 
that $0\leq r_k \leq q \norme{T}$.

\newpage
We present the proof in the case 
$a_k\geq b_{k+1}$; 
the other case is similar.
We have $\widehat{B_kC_kA_k}=\widehat{B_kA'_kA_k} = \pi/q$ and 
\[
\tan(\delta_k(x))  
	=  \tan(\widehat{C_kB_kA_k})  
	= \frac{|A_kL|}{|B_kL_k|}= \frac{|A_kL_k|}{|B_kC_k|-|C_kL_k|}
	=  \frac{r_k \sin(\pi/q)}{ |B_kC_k| -  r_k \cos(\pi/q) }	.
\]
We obtain 
from a classical geometry argument that 
$|B_kC_k| = \frac{x-s_k}{x} |B'_kA'_k| = 2(x-s_{k})\sin(\pi/q)  $. We deduce
\[
\tan(\delta_k(x))  
	=  \frac{r_k \sin(\pi/q)}{ 2(x-s_{k})\sin(\pi/q) -  r_k \cos(\pi/q) }
\implies
\delta_k(x)
	=  \arctan \left( \frac{r_k \sin(\pi/q)}{ 2(x-s_{k})\sin(\pi/q) -  r_k \cos(\pi/q) } \right).
\]
Thus we obtain that the map $x\mapsto \delta_k(x)$ is decreasing and tends to $0$ when $x$ approaches $+\infty$. 

The angle $\delta_k(x)$ is maximal if $r_k = q\norme{T}$ and $s_k=0$:
\[
\tan(\delta_k(x))  
	\leq  \frac{q\norme{T} \sin(\pi/q)}{ 2x\sin(\pi/q) -  q\norme{T} \cos(\pi/q) }.
\]
Thus we have :
\begin{flalign*}
\delta_k(x) < \beta-\alpha/2
	&\iff 
\tan(\delta_k(x) ) < \tan(\beta-\alpha/2)\\
	&\impliedby
\frac{q\norme{T} \sin(\pi/q)}{ 2x\sin(\pi/q) - q\norme{T} \cos(\pi/q) } < \tan(\beta-\alpha/2) \\
	& \impliedby
\frac{q\norme{T} \sin(\pi/q)}{\tan(\beta-\alpha/2)} <  2x\sin(\pi/q) - q\norme{T} \cos(\pi/q)  \\
	& \impliedby
\frac{q\norme{T}  \sin(\pi/q)}{ \tan(\beta-\alpha/2)} + q\norme{T} \cos(\pi/q)  < 2x\sin(\pi/q) \\
	&\impliedby 
\frac{q\norme{T}}{ 2\tan(\beta-\alpha/2)} + q\norme{T} \frac{1}{2\tan(\pi/q) } < x\\
	&\impliedby 	
\bar x < x.
\end{flalign*}
\end{proof}

\begin{remark}
We can also show $x \leq |A_k| =\sqrt{x^2+a_k^2} \leq \sqrt{x^2+q\norme{T}}$ and $x \leq |B_k| =\sqrt{x^2+b_{k}^2} \leq \sqrt{x^2+q\norme{T}}$.
Using 
Proposition \ref{prop:convpoly}, we deduce that $\frac 1x P_x$ 
converges in  
the Hausdorff topology to the regular polygon centered at $0$ with $q$ sides, and with $i$ 
as a 
vertex.
\end{remark}

\begin{figure} 
\begin{center}
\begin{tikzpicture}[scale=1]
\draw[decorate,decoration={brace,raise=0.5cm}]
	 (6.5,3) -- (8,0) 
	node[right=0.6cm,pos=0.4] {\small $a_k$};
\draw[decorate,decoration={brace,raise=0.1cm}]
	 (6.5,3) -- (7.5,1) 
	node[right=0.1cm,pos=0.5] {\small $r_k$};
\draw[decorate,decoration={brace,raise=0.1cm}]
	 (0,0)  -- (0.5,1)
	node[left=0.2cm,pos=0.5] {$s_k = b_{k+1}$};
\draw[decorate,decoration={brace,raise=0.2cm}]
	(8,0)--(0,0)
	node[below=0.4cm,pos=0.5] {$2x\sin(\pi/q)$};
\draw[thick,blue] 
	(0,0)--(0.5,1)--(6.5,3)--(8,0)--cycle;
	\draw[dashed,blue] (0,0)--(2, 4);
	\draw[dashed,blue] (8,0)--(6, 4);
	\draw[blue] (0.5,1)--(7.5, 1);	
\node[thick,blue, below right] at (8,0) {$A'_k$};
	\node[thick,blue] at (8,0) {$\bullet$};
\node[thick,blue, above right] at (6.5,3) {$A_k$};
	\node[blue] at (6.5,3) {$\bullet$};
\node[blue] at (6,4) {$\bullet$};
\node[thick,blue, below left] at (0,0) {$B'_k$};
	\node[thick,blue] at (0,0) {$\bullet$};
\node[thick,blue, above left] at (0.5,1) {$B_k$};
	\node[blue] at (0.5,1) {$\bullet$};
\node[blue] at (2,4) {$\bullet$};
\node[thick,blue, below] at (7.5,1) {$C_k$};
	\node[blue] at (7.5,1) {$\bullet$};
\draw[blue]  (6.5,3)--(6.5,1);
	\node[thick,blue, below] at (6.5,1)  {$L_k$};
\draw[-] (0.5,0) arc (0:63:0.5); 
	\node[right] at (0.5,0.2) {\small $\pi/q$};
\draw[-] (7.5,0) arc (180:117:0.5); 
	\node[left] at (7.5,0.2) {\small $\pi/q$};
\draw[-] (2,1) arc (0:18:1.5); 
	\node[right] at (2,1.2) {\small $\delta_k(x)$};
\end{tikzpicture}

\end{center}
\caption{The quantity $\delta_k(x)$ used in Lemma \ref{prop:convpoly}. }
\label{fig:preuveconvpoly}
\end{figure}
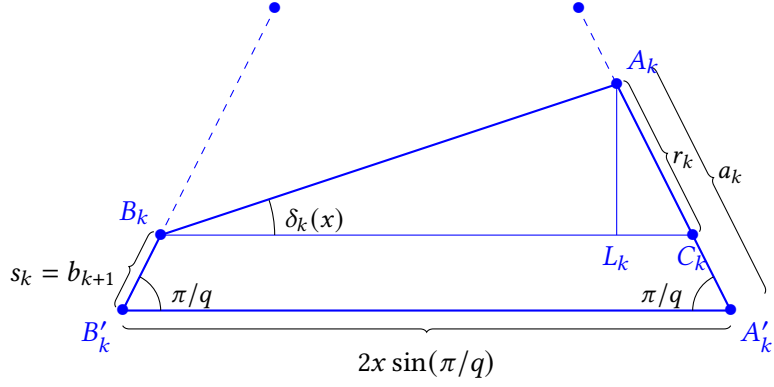

\begin{proposition} \label{prop:polyTq} 

For 
any $x > \bar x$ 
let us define $K_x = \min\big( v\sin(\beta-\alpha/2-\delta_k(x)) , w\sin(\beta-\alpha/2) \big)$, see Lemma \ref{lemme-faux} for notations. 
Then, if $z\in \mathbf P_{x}$, 
we have $T^{q}(z)\in \mathbf P_{x'}$ with $x'\geq x+K_x$. 
Moreover, 
the map $x\mapsto K_x$ is increasing.
\end{proposition}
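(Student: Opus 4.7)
The plan is to analyze which edge of $P_x$ contains $z$ and to use the explicit form of the piecewise-translation $T^q$ given by Lemma~\ref{lemme-faux}. Rotating the plane so that the bisector of $\mathbf C_k$ lies along the positive $y$-axis makes the geometry transparent: in this frame the translations become $v_k = v\,e^{i(\beta-\alpha/2)}$ and $w_k = w\,e^{i(\beta-\alpha/2-\pi/q)}$, while the chord $[A'_k(x), B'_k(x)]$ is horizontal at height $x\cos(\pi/q)$. In both cases I will establish that $T^q(z)$ lies on the exterior side of the corresponding edge-line of $P_{x+K_x}$, which for a convex polygon forces $T^q(z)\notin P_{x+K_x}$ and hence $x'\geq x+K_x$.

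For the strip-edge case, where $z$ lies on $[B_{k-1}(x), A_k(x)] \subset \mathbf G'_k$, the key observation is that this edge is perpendicular to the radial direction $ie^{2i\pi k/q}$ and has radial projection exactly $x$. A direct computation gives $\mathrm{Re}(w_k\cdot\overline{ie^{2i\pi k/q}}) = w\sin(\beta-\alpha/2)$, so $T^q(z) = z + w_k$ has radial projection $x + w\sin(\beta-\alpha/2)$. Since the strip edge of $P_{x+K_x}$ in $\mathbf G'_k$ lies on the line of radial projection $x + K_x$, and $w\sin(\beta-\alpha/2)\geq K_x$ by definition of $K_x$, the image $T^q(z)$ is on the exterior side of this line. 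This produces the desired $x'\geq x + K_x$.

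For the cone-edge case, where $z$ lies on $[A_k(x), B_k(x)] \subset \mathbf E'_k$, I would observe that in the rotated frame this edge makes angle $\delta_k(x)$ with the horizontal chord direction, while $v_k$ makes angle $\beta-\alpha/2$ with that same direction. The outward-perpendicular component of $v_k$ relative to the cone-edge line is therefore $v\sin(\beta-\alpha/2-\delta_k(x))$, which is positive because Lemma~\ref{prop:convpoly} guarantees $0\leq\delta_k(x)<\beta-\alpha/2<\pi/2$ for $x>\bar x$. I would then compare the translated line supporting $[A_k(x)+v_k, B_k(x)+v_k]$ with the line supporting the cone edge of $P_{x+K_x}$, and verify that $T^q(z)$ sits on the exterior side of the latter, hence outside $P_{x+K_x}$. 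The monotonicity of $x\mapsto K_x$ then follows from the monotonicity of $\delta_k$ in Lemma~\ref{prop:convpoly}: since $\delta_k$ is decreasing with values staying in $(0,\beta-\alpha/2)$, the quantity $v\sin(\beta-\alpha/2-\delta_k(x))$ is increasing (the argument remaining in the interval $(0,\pi/2)$ where $\sin$ is monotone), while $w\sin(\beta-\alpha/2)$ is constant.

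The main obstacle is the cone-edge case: because the two cone-edge lines supporting the edges of $P_x$ and $P_{x+K_x}$ are not parallel (their angles $\delta_k(x)$ and $\delta_k(x+K_x)$ differ), the naive identity ``perpendicular displacement equals gain in $x$'' holds only to first order in $K_x$. A rigorous argument therefore requires comparing the two non-parallel segments directly, using the explicit coordinates of $A_k(x)+v_k$, $B_k(x)+v_k$, $A_k(x+K_x)$, and $B_k(x+K_x)$ in the rotated frame, to check that the entire translated segment lies on the correct side of the supporting line of $[A_k(x+K_x), B_k(x+K_x)]$ and not just at the point $T^q(z)$.
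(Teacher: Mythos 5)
Your strategy is the same as the paper's: on a strip edge $[B_{k-1},A_k]\subset\mathbf G'_k$ the map $T^q$ is the translation by $w_k$ and you measure the gain in the radial coordinate, obtaining $w\sin(\beta-\alpha/2)$; on a cone edge $[A_k,B_k]\subset\mathbf E'_k$ it is the translation by $v_k$ and you measure the component normal to the edge, obtaining $v\sin(\beta-\alpha/2-\delta_k(x))$. Your strip-edge computation is complete and correct (the strip edges of $\mathbf P_x$ and $\mathbf P_{x+K_x}$ are parallel, so the half-plane argument closes that case for any landing zone of $T^q(z)$), and your proof of the monotonicity of $x\mapsto K_x$ is exactly the paper's.

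The cone-edge case, however, you leave unfinished, and that is a genuine gap: after computing the normal displacement $d=v\sin(\beta-\alpha/2-\delta_k(x))$ you correctly point out that, since the edges $[A_k(x),B_k(x)]$ and $[A_k(x+K_x),B_k(x+K_x)]$ are not parallel, ``normal displacement at least $K_x$'' does not by itself place $T^q(z)$ on the exterior side of the second edge-line; you then only describe the verification that would be needed. To be fair, the paper's own proof is no better here: it simply asserts ``thus we deduce $x'\geq x+v\sin(\beta-\alpha/2-\delta_k(x))$'' at precisely this point. The cleanest way to close the gap is the following comparison of leaves. Writing $u_1=ie^{2i\pi k/q}$, $u_2=ie^{2i\pi(k+1)/q}$, one has $A_k(x)=xu_1+a_kn_1$ and $B_k(x)=xu_2+b_{k+1}n_2$ with $n_1,n_2$ fixed unit normals, so the points with the same barycentric parameter $t$ on the leaves $[A_k(x),B_k(x)]$ and $[A_k(x''),B_k(x'')]$ differ by $(x''-x)\bigl((1-t)u_1+tu_2\bigr)$, a vector of norm at most $|x''-x|$ because it is $|x''-x|$ times a convex combination of unit vectors. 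Hence every point of the leaf with parameter $x''$ lies within distance $|x''-x|$ of the leaf with parameter $x$. If $T^q(z)$ lay on the cone edge of $\mathbf P_{x'}$ with $x'<x+d$, it would be within distance $x'-x<d$ of the segment $[A_k(x),B_k(x)]$, contradicting the fact that its distance to the supporting line of that segment equals $d$ (the case $x'\le x$ being excluded by convexity of $\mathbf P_{x'}\subset\mathbf P_x$ and the fact that $T^q(z)$ lies strictly outside the supporting half-plane of $[A_k(x),B_k(x)]$). The remaining sub-case, where $T^q(z)$ falls into an adjacent strip rather than onto a cone leaf, still needs the separate (easy) estimate corresponding to the paper's second bullet; your write-up does not mention it.
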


\begin{proof}
First of all, 
note that by Lemma \ref{prop:convpoly} 
we have 
$\beta-\alpha/2-|\delta_x| > 0$.
From now on we will use the notation from Figure \ref{fig:preuveconvpoly}. 

We split the proof 
into 
four cases, due to Lemma \ref{lemme-faux}.
In each case we consider $z\in P_x$ and compute $x'$ such that $T^qz\in P_{x'}$.

\begin{itemize}
\item 
Assume 
$z\in \mathbf  E_k'$ and $T^q(z)\in \mathbf  E_k'$. 
The angle between $v_k$ and the blue segment is equal to  $\beta-\alpha/2-\delta_k(x)$. 
Indeed, 
$\beta-\alpha/2$ is the angle between $A'_kB'_k$ and $v_k$, see Figure \ref{fig:vecttran}. 
Thus 
we deduce
$x' \geq  x+v_k\sin(\beta-\alpha/2-\delta_k(x))$.

\item If $z\in \mathbf  E_k'$ and $T^q(z)\in \mathbf  G_{k}'$, then $x'-x$ is bigger than previous value 
(see green arrow in Figure \ref{fig:vecttran}).

\item 
Assume 
$z\in \mathbf  G_k'$ and $T^q(z)\in \mathbf  G_k'$

Then we obtain $x' = x+w\sin(\beta-\alpha/2)$.
\item 
Assume $z\in \mathbf  G_k'$ and $T^q(z)\in \mathbf  E_{k-1}'$. 
In this case we can fall back onto the previous case $x' \geq x+w\sin(\beta-\alpha/2)$.
\end{itemize}

Since $x \mapsto \delta_k(x)$ is decreasing, we deduce that $x \mapsto K_x$ is increasing.
\end{proof}

\subsubsection{The case $\Delta>0$}
The same construction 
as in the case $\Delta<0$ can be done with a familly of polygons $\mathbf Q_x$:

\begin{itemize}
\item
As in Lemma \ref{le:modulepolygone} we can prove:
\[ 
	\forall z\in \mathbb C, \quad
	z\in Q_x \implies x \cos(\pi/q) \leq |z|\leq \sqrt{x^2+q^2\norme{T}^2}
\]
\item
As in Proposition \ref{prop:polyTq},
for all $x> \bar x$ (with $\bar x$ defined 
as 
in Proposition \ref{prop:convpoly}), there exists $L_x>0$ such that for 
$z\in \mathbf Q_{x}$ we have $T^{q}(z)\in \mathbf P_{x'}$ with $x'\leq x-L_x$.
Moreover, 
the map $x\to L_x$ is increasing and 
tends to $0$ if $x\to \bar x$.
\end{itemize}

\subsection{Proof of Theorem \ref{th:casrat}}

We consider two cases:

\begin{itemize}
\item First case: $\Delta<0$.

Consider $z$ such that $|z|>\sqrt{{\bar x}^2+q^2\norme{T}^2}
$. 
By 
Lemma \ref{le:modulepolygone} we deduce $z\in \mathbf P_{x_0}
$, 
with 
\[
	x_0 \geq \sqrt{|z|^2-q^2\norme{T}^2}> \sqrt{{\bar x}^2}=\bar x.
\]
Thus we have $x_0> \bar x$ and by Proposition \ref{prop:polyTq}, for all $n\in \mathbb N$, we obtain
$T^{nq }z\in \mathbf P_{x_n}$ with 
\[
x_n\geq x_0+K_{x_0}+\cdots+K_{x_{n-1}} \geq x_0+nK_{x_0} 
	\text{ since $x \mapsto K_x$ is increasing}.
\]
Now by Lemma \ref{le:modulepolygone} we have
\[
	|T^{nq }z| \geq  x_n\cos(\pi/q) \geq (x_0+K_{x_0}n)\cos(\pi/q) \to+\infty.
\]
Thus we deduce $M\leq \sqrt{{\bar x}^2+q^2\norme{T}^2}$. We obtain finally
\begin{flalign*}
\sqrt{{\bar x}^2+q^2\norme{T}^2}
	&=
q\norme{T} \sqrt{ \left(\frac{1}{ 2\tan(|\beta-\alpha/2|)} + \frac{1}{2\tan(\pi/q) } \right)^2+1}\\
	&\leq
q\norme{T}  \left(\frac{1}{ 2\tan(|\beta-\alpha/2|) } + \frac{1}{2\tan(\pi/q) } +1 \right).
\end{flalign*}

\item Second case: $\Delta>0$.
Let us denote 
by 
$\conv(Q_x)$ the convex hull of $\mathbf Q_x$.

We fix 
$M>0$, and 
we 
deduce $B(0,M)\subset \conv(Q_{x_0})$ with $x_0 = M/\cos(\pi/q)$.

If $x_0\leq \bar x$ then the proof is finished. 

Otherwise, 
consider  a sequence $(x_n)_n$ of real numbers, defined by induction according to the following instructions:
If 
$x_{n-1}>\bar x$, 
then 
there exists $x_n>0$ such that $T^{qn}B(0,M)\subset \conv(Q_{x_n})$ and $x_n<x_{n-1}<\cdots<x_0$.

If there exists $n_1$ such that $x_{n_1}<\bar x$, then the proof is 
finished. 
Otherwise the sequence $(x_n)_n$ is a 
decreasing 
bounded sequence, thus it converges to 
some value 
$x_\infty$.
If $x_\infty>\bar x$, then $L_x$ has limit 
value 
$0$ as $x$ 
tends to $x_\infty$, 
and 
by a fact directly analogous to what has been shown in Proposition \ref{prop:polyTq}  we deduce 
$L_{x_\infty}=0$, 
which is a contradiction. We conclude $x_\infty=\bar x$.

The convex polygons $(\conv(Q_x))_x$ form a decreasing 
family (again with respect to the inclusion)
and 
from the definition of $\attr(T)$ 
we deduce
\[
\attr(T) \subset \bigcap _{n\geq 0}T^{qn}B(0,M)\subset \bigcap _{n\geq 0} \conv(x_n) = \conv(Q_{\bar x}).
\]
We conclude the proof with the 
observation that 
$
	\conv(Q_{\bar x})\subset B(0,\sqrt{{\bar x}^2+q^2\norme{T}^2}).
$
\end{itemize}

\begin{remark}
The above derived result can be improved slightly: indeed, it is possible to show
$\born(T)\subset\conv(P_{\bar x})$ and $\attr(T)\subset\conv(Q_{\bar x})$.
\end{remark}

\subsection{Rational example similar to Section \ref{exemple_cas_irrationnel}}

We consider the irrational angle of Section \ref{exemple_cas_irrationnel} and truncate it with the convergents of the continued fraction. In other 
words, we consider 
the following:
\[
\renewcommand{\arraystretch}{1.5}
\begin{array}{|c|c|c|c|c|c||c|c|c|c|}
\hline
\alpha & a & C_0 & C_1  & \gamma & \mathbf D &
	\beta & \Delta \\
\hline 
\frac{169}{478} 2\pi& \frac{p_7}{q_7} = \frac{169}{478} &-e^{1.14i}& 1.5e^{1.14i}& \frac{\pi}{2}& i\mathbb R &
	1.14 & -0.13112>\Delta >-0.13111  \\
\hline 
\end{array}
\]

Here we obtain $M \simeq 120\, 968$. 
For the irrational angle, we have obtained $M\simeq 571283$.

\bibliographystyle{abbrv}
\bibliography{biblio-attrBBK.bib}
\end{document}